\newtheorem{theorem}{Theorem}[section]    
\newtheorem{lemma}[theorem]{Lemma}          
\newtheorem{proposition}[theorem]{Proposition}  
\newtheorem{claim}[theorem]{Claim}  
\newtheorem{corollary}[theorem]{Corollary} 
\newtheorem{conjecture}[theorem]{Conjecture} 
\theoremstyle{definition}
\newtheorem{definition}[theorem]{Definition}
\newtheorem{remark-number}[theorem]{Remark}  
\newtheorem{question}[theorem]{Question} 
\newtheorem{example}[theorem]{Example}   
\newtheorem{remark}{Remark}             
\numberwithin{equation}{section}
\newcommand{\K}{\mathcal{K}}
\newcommand{\T}{\mathcal{T}}
\newcommand{\Z}{\mathbb{Z}}
\newcommand{\R}{\mathbb{R}}
\title{
Positivities of knots and links and the defect of Bennequin inequality
}
\author{Jesse Hamer}
\address{Department of Mathematics,   
The University of Iowa, Iowa City, IA 52242, USA}
\email{jesse-hamer@uiowa.edu} 
\author{Tetsuya Ito}
\address{Department of Mathematics, Kyoto University, Kyoto 606-8502, JAPAN}
\email{tetitoh@math.kyoto-u.ac.jp}
\author{Keiko Kawamuro}
\address{Department of Mathematics,   
The University of Iowa, Iowa City, IA 52242, USA}
\email{keiko-kawamuro@uiowa.edu}
\date{\today}
\begin{document}

\maketitle
\begin{abstract}
We discuss relations among various positivities of knots and links, such as strong quasipositivity and quasipositivity. We give several pieces of supporting evidence for conjectural statements concerning these positivities and the defect of Bennequin inequality. Finally, we determine strong quasipositivity and quasipositivity for knots up to 12 crossings (with two exceptions for quasipositivity).
\end{abstract}

\tableofcontents

\section{Introduction}

Various positivities of knots, links and braids play important roles in knot theory and contact geometry. 
They are defined diagramatically.  It is a fundamental problem to determine when a given link (or braid) has these positivities.

The aim of this paper is to discuss relations among the various notions of positivity and give several conjectural implications. We give supporting evidence for the conjectures 
by proving them under additional assumptions and by determining  
\begin{itemize}
\item
all the strongly quasipositive knots up to 12 crossings and 
\item 
all the quasipositive knots with two exceptions up to 12 crossings. 
\end{itemize} 
Moreover, for these strongly quasipositive and quasipositive knots we give explicit strongly quasipositive braid words and quasipositive braid words respectively in Section \ref{sec:tables}.

\subsection{Notation and conventions}
In this section we list our notation and conventions, and review the definitions of various positivities of braids and links.

Let $B_n$ be the braid group presented by
$$ 
B_n= \left\langle \sigma_1,\dots,\sigma_{n-1} \middle| \begin{array}{ll} \sigma_i \sigma_j = \sigma_i \sigma_j & ( |i-j| >1)  \\
\sigma_i \sigma_{i+1} \sigma_i = \sigma_{i+1} \sigma_i \sigma_{i+1} 
& (i=1,\dots, n-2)
\end{array}
\right\rangle
$$ 
We call the generators $\sigma_1,\ldots, \sigma_{n-1}$ the \emph{standard} generators of $B_n$. 
For $1\leq i<j\leq n$, the \emph{band generator} $\sigma_{i,j}$ is defined by
\[\sigma_{i,j}=(\sigma_{j-1}\sigma_{j-2}\cdots \sigma_{i+1})\sigma_{i}(\sigma_{j-1}\sigma_{j-2}\cdots \sigma_{i+1})^{-1}. \]
Using the band generators, the braid group is presented by 
\[ B_n= \left\langle \sigma_{i,j} \ \middle| \begin{array}{ll}
\sigma_{j,k}\sigma_{i,j} = \sigma_{i,j}\sigma_{i,k}=\sigma_{i,k}\sigma_{j,k}
& (i<j<k)\\
\sigma_{i,j}\sigma_{k,l}=\sigma_{k,l}\sigma_{i,j} & (i<j<k<l)
\end{array}
 \right\rangle\]

\begin{definition}[Positivities of braids]
A braid $\beta \in B_{n}$ is
\begin{itemize}
\item[--] \emph{positive} if it is written as a product of positive powers of some of the standard generators  $\sigma_1,\ldots, \sigma_{n-1}$. 
\item[--] \emph{strongly quasipositive} if it is written as a product of positive powers of some of the band generators $\{\sigma_{i,j} \ | \ 1\leq i < j \leq n\}$.
\item[--] \emph{quasipositive}  if it is written as a product of positive powers of 
some conjugates of the standard generators $\sigma_1,\sigma_2,\dotsc, \sigma_{n-1}$.
\end{itemize}
\end{definition}

Throughout the paper, for simplicity we mean by  ``link"  either an oriented knot or link. To distinguish topological links, transverse links and their closed braid representatives, we use the following notation.
\begin{itemize}
\item The letter $\mathcal K$ denotes a topological knot or link type in $S^3$. 

\item The letter $\mathcal T$ denotes a transverse link in the standard contact 3-sphere $(S^3, \xi_{std})$. 

\item The letter $K$ denotes a braid word (in the standard generators or the band generators) whose closure represents $\K$ or $\mathcal T$.
For braid words $K$ and $K'$ we write:
\begin{itemize}
\item $K=K'$ if $K$ and $K'$ are exactly the same braid word.
\item $K \sim K'$ if $K$ and $K'$ represent the same element of the braid group $B_n$.
\item $K \approx K'$ if the braids represented by $K$ and $K'$ are conjugate in $B_n$.
\end{itemize} 
\end{itemize}

\begin{definition}[Positivities of links]
A link $\K$ is 
\begin{itemize}
\item[--] a \emph{positive braid link} if $\mathcal{K}$ can be represented by a positive braid.
\item[--] a \emph{positive link} if $\mathcal{K}$ can be represented by a diagram without negative crossings.
\item[--] a \emph{strongly quasipositive link} if $\mathcal{K}$ can be represented by a strongly quasipositive braid.

\item[--] a \emph{quasipositive link} if $\mathcal{K}$ can be represented by a quasipositive braid.

\item[--] an \emph{almost positive braid link} if $\mathcal{K}$ can be represented by a braid word $K$ in the standard generators and their inverses, $\sigma_1^{\pm 1},\ldots,\sigma_{n-1}^{\pm 1}$, such that the number of inverses used is at most one. 
(\emph{Almost strongly quasipositive link} and  \emph{almost quasipositive link} are defined similarly.) 

\item[--] an \emph{almost positive link} if $\mathcal{K}$ is represented by a diagram which has at most one negative crossing.
\end{itemize}

\end{definition}

The band generator $\sigma_{i,j}$ (resp. its inverse $\sigma_{i,j}^{-1}$) can be viewed as the boundary of a positively (resp. negatively) twisted band connecting the $i$-th and $j$-th strands of the braid. 
The following special Seifert surface plays an important role.

\begin{definition}[Bennequin surface]
Let $K$ be an $n$-braid word. 
Starting with $n$ parallel disks and attaching a twisted band for each letter $\sigma_i^{\pm 1}$ or $\sigma_{i,j}^{\pm 1}$ in the word $K$, we get a Seifert surface of the topological link type $\mathcal{K}$ which is denoted by $\Sigma_K$. We call $\Sigma_K$ the \emph{Bennequin surface} associated to the braid word $K$.
\end{definition}

We also recall the definition of a homogeneous link since later we study the above positivities for homogeneous links. 

\begin{definition}[Canonical Seifert surface]
Given a link diagram $D \subset \mathbb{R}^{2}$, Seifert's algorithm \cite{Rolfsen} 
gives a Seifert surface $F_D$ which we call the \emph{canonical Seifert surface} of the diagram $D$. 
We describe $F_D$ by the Seifert circles and signed edges that encode the signs of the corresponding crossings in $D$. See Figure~ \ref{fig:canonical_Seifert} (ii). 

The \emph{Seifert graph} $G_D$ consists of vertices corresponding to Seifert circles of $D$ and signed edges corresponding to twisted bands of $F_D$ where the sign encodes the sign of the corresponding crossing. See Figure \ref{fig:canonical_Seifert} (iii).

A vertex $v$ of $G_D$ is called a \emph{cut vertex} if $G_D\setminus v$ is disconnected. A \emph{block} is a maximal connected subgraph of $G_D$ containing no cut vertices.  
See Figure \ref{fig:canonical_Seifert}  (iv). 
\end{definition} 

\begin{figure}[htbp]
\begin{center}
\includegraphics*[width=90mm]{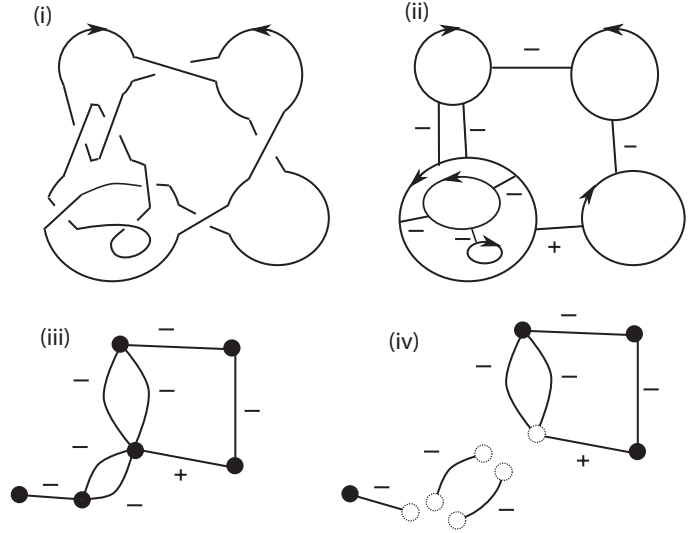}
\caption{
(i) Link diagram $D$. 
(ii) Circle-edge description of canonical Seifert surface $F_{D}$.
(iii) Seifert graph $G_{D}$. 
(iv) Blocks of $G_D$. }
\label{fig:canonical_Seifert}
\end{center}
\end{figure}

\begin{definition}[Homogeneous link] \label{def:homogeneous} 
A link diagram is called \emph{homogeneous} if for each block of $G_D$ all the edges have the same sign. A link admitting a homogeneous diagram is called  \emph{homogeneous}. 
\end{definition}

Alternating diagrams and positive diagrams are homogeneous.  
One useful property of a homogeneous diagram $D$ we use is 
\begin{equation}\label{eq:useful property}
\chi(F_D)=\chi(\K)
\end{equation}
see \cite[Corollary 4.1]{cr}. For other basic properties of homogeneous links, we refer the reader to \cite{cr}. 

The \emph{braid index} $\beta(\K)$ (resp. $\beta(\mathcal T)$) is the minimum number of strands which is needed to represent $\K$ (resp. $\mathcal{T}$) as a closed braid.

Among homogeneous links, we will often use the following more restricted subclasses.

\begin{definition}[Strongly homogeneous link]
\label{def:strongly-homogeneous}
We say that a link $\K$ is \emph{strongly homogeneous} if $\K$ admits a homogeneous diagram $D$ with $s(D)=\beta(\K)$, where $s(D)$ denotes the number of Seifert circles of $D$.
\end{definition}

We denote the \emph{self-linking number} of a transverse link $\mathcal T$ by 
$sl(\mathcal T)$. When $\mathcal{T}$ is represented by a closed $n$-braid $K$, we have 
\[ sl(\mathcal{T}) = -n + w(K) \]
where $w(K)$ denotes the writhe (exponent sum) of the braid $K$ \cite{Bennequin}.
For a topological link $\K$ the {\em maximal self-linking number}  $SL(\K)$ is defined by
\[ SL(\K) = \max\{ sl(\mathcal T) \ | \ \mathcal T \mbox{ is a transverse link representative of } \mathcal K\}.\]
Thanks to the generalized Jones' conjecture \cite{K} proven in \cite{DynnikovPrasolov, LM}, we have 
\[ SL(\K)=-\beta(\K) + w(\K)\]
where $w(\K)$ is the uniquely determined writhe of a braid representative of $\K$ realizing the braid index $\beta(\K)$.

\subsection{The defect $\delta_3$} 

Let
\begin{align*} 
\chi(\K) &= \max\{ \chi(\Sigma) \ | \ \Sigma \mbox{ is a Seifert surface for }  \K\}
\end{align*}
be the maximal Euler characteristic for $\K$.
It was discovered by Bennequin \cite{Bennequin} that 
\begin{equation}\label{eq:Benne-ineq}
SL(\K) \leq -\chi(\K),
\end{equation}
which we call the {\em Bennequin inequality.}
A number of people, including Etnyre, Hedden, Rudolph, and Van Horn-Morris, have been studying the relation between the sharpness of the Bennequin inequality and strong quasipositivity of $\K$.

Let $\K$ be a link type in $S^3$ and $K$ be a braid representative of $\K$. 
By Bennequin \cite{Bennequin} we may regard $K$ as a transverse link, $\T$, in $(S^3, \xi_{std})$. In \cite[Definition 1.1]{IK-bennequin-bound} we introduced 
$$\delta_3(\T)=\delta_3(K):= \frac{1}{2}(-\chi(\K) - sl(K)),$$ 
which we call the {\em defect of the transverse link type of $K$}.   
This leads us to further define $\delta_3(\K)$ as follows: 

\begin{definition}
Define 
\begin{eqnarray*}
\delta_3(\K)&:=& \frac{1}{2}(-\chi(\K) - SL(\K)) \\
&=& \min\{\delta_3(K) \ | \ K \mbox{ is a braid representative of } \K\}.
\end{eqnarray*}
We call $\delta_3(\K)$ the \textit{defect} of  the topological link type $\K$. 
\end{definition}

Since $\chi(\K)\geq \chi(\Sigma_K)= n - \mbox{(the length of the braid word } K)$
and the band generator $\sigma_{i,j}^{\pm 1}$ contributes $\pm 1$ to the writhe $w(K)$, we obtain 
$$\delta_3(K)\leq  \mbox{ the number of negative bands in }\Sigma_K.$$
In particular, $\delta_3(K)=0$ if $K$ is strongly quasipositive.

The geometric meaning of the defect $\delta_3$ is expected to be the number of negative bands in a minimal genus Bennequin surface:

\begin{conjecture}\label{SQPconjecture}\cite[Conjecture 1]{IK-bennequin-bound}
Let $\T$ be a transverse link of topological type $\K$. 
There exists a braid representative $K$ of $\T$ such that 
\[ \delta_3(\T)=
\mbox{ the number of negative bands in } \Sigma_K;\] 
equivalently, 
\[\chi(\Sigma_K)=\chi(\K).\]
Consequently, there exists a braid representative $K'$ of $\K$ such that 
\[
\delta_3(\K)=\mbox{ the number of negative bands in } \Sigma_{K'}; 
\]
equivalently,  \[\chi(\Sigma_{K'})=\chi(\K) \mbox{ and } sl(K')=SL(\K).
\]
\end{conjecture}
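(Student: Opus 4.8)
The plan is to recast the conjecture combinatorially, reduce the topological assertion to the transverse one, and then try to produce the desired braid by braiding a maximal--Euler--characteristic Seifert surface and repairing its transverse type. First, a bookkeeping remark: if $\Sigma_K$ is built from $n$ disks, $p$ positive bands and $q$ negative bands, then $\chi(\Sigma_K)=n-p-q$ while $sl(K)=-n+w(K)=-n+p-q$, so that
\[
-\chi(\Sigma_K)-sl(K)=2q.
\]
Since $\chi(\K)\ge\chi(\Sigma_K)$ always, this gives $\delta_3(K)=\tfrac12(-\chi(\K)-sl(K))\le q$ with equality precisely when $\chi(\Sigma_K)=\chi(\K)$. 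Thus the first assertion of the conjecture is equivalent to: \emph{every transverse link type $\T$ admits a braid representative $K$ whose Bennequin surface has maximal Euler characteristic}. Granting this, the ``consequently'' part is automatic: choose a transverse representative $\T$ of $\K$ with $sl(\T)=SL(\K)$, apply the first assertion to obtain $K'$ with $\chi(\Sigma_{K'})=\chi(\K)$; then $sl(K')=sl(\T)=SL(\K)$, hence $\delta_3(K')=\delta_3(\K)$, and this number equals the number of negative bands of $\Sigma_{K'}$.

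For the transverse assertion I would start from a Seifert surface $F$ realizing $\chi(F)=\chi(\K)$ and put it into braided position with respect to a braid axis, using the theory of braided surfaces (Bennequin, Rudolph, Montesinos--Morton, Pavelescu). The outcome is the Bennequin surface $\Sigma_{K_1}$ of some band word $K_1$, and because braiding is realized by an ambient isotopy we retain $\chi(\Sigma_{K_1})=\chi(\K)$. So $K_1$ is a braid representative of the topological type $\K$ whose Bennequin surface is already optimal; what remains is to arrange that it represents the \emph{prescribed} transverse type $\T$.

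This last point is where the real difficulty lies. The isotopy used to braid $F$ moves $\partial F$ and need not preserve the transverse isotopy class of the boundary, so in general $K_1$ and a braid representing $\T$ differ by a nontrivial sequence of moves; by the transverse Markov theorem such a sequence may be taken to consist of braid conjugations and positive/negative (de)stabilizations. Conjugation and \emph{positive} (de)stabilization are harmless --- a positive stabilization is a positive Hopf plumbing, which preserves $\chi(\Sigma_K)$, the number of negative bands, and the transverse type --- so everything reduces to showing that the residual discrepancy can be absorbed without ever creating an extra negative band. No general mechanism for this is known; it is essentially the full content of the conjecture, namely that a maximal--Euler--characteristic Bennequin surface can always be found within a prescribed transverse class. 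Accordingly, rather than attacking this head-on, the paper establishes the statement under extra hypotheses (for instance for homogeneous and strongly homogeneous links, where the combinatorics of a homogeneous diagram together with the identity $\chi(F_D)=\chi(\K)$ supply the missing control) and verifies it by direct computation for knots up to $12$ crossings.
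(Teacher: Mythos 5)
The statement you were asked to prove is not a theorem of the paper: it is Conjecture~\ref{SQPconjecture}, quoted from \cite[Conjecture 1]{IK-bennequin-bound}, and the paper offers no proof of it (only partial evidence, e.g.\ Proposition~\ref{prop:TFAE} for homogeneous links and the case-by-case verification for knots of low crossing number). So there is no proof in the paper to compare yours against, and your proposal is, as you yourself acknowledge in the final paragraph, not a proof.

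That said, the parts of your write-up that can be checked are correct. The bookkeeping identity $-\chi(\Sigma_K)-sl(K)=2q$ (with $q$ the number of negative bands) is right, and it correctly establishes the two ``equivalently'' clauses as well as the inequality $\delta_3(K)\le q$ with equality iff $\chi(\Sigma_K)=\chi(\K)$ --- this matches the remark the paper makes just before stating the conjecture. Your derivation of the ``consequently'' part from the transverse part is also correct: take $\T$ with $sl(\T)=SL(\K)$ and apply the first assertion. The genuine gap is exactly where you locate it: after braiding a maximal--Euler--characteristic Seifert surface $F$ into a Bennequin surface $\Sigma_{K_1}$, there is no control over the transverse type of $\partial\Sigma_{K_1}$, and the transverse Markov theorem only tells you that the discrepancy with the prescribed $\T$ is a sequence of conjugations and stabilizations of both signs; absorbing the negative (de)stabilizations without introducing new negative bands is precisely the unresolved content of the conjecture. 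In short: your reduction is sound and your honesty about the missing step is appropriate, but the central assertion remains open, in your proposal as in the paper.
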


\subsection{The defect $\delta_4$} 

Let
\begin{align*} 
\chi_4(\K) &= \max\{ \chi(\Sigma) \ | \ \Sigma \subset B^4 \mbox{ is an oriented smoothly embedded surface with } \partial\Sigma= \K \}.
\end{align*}
The 4-dimensional counterpart of the Bennequin inequality (\ref{eq:Benne-ineq}), called the {\em slice Bennequin inequality}, was proved by Lisca and Mati\'c \cite{LM} and, independently, Akbulut and Matveyev \cite{AM} (see also Rudolph \cite{R2}): 
\begin{equation} 
SL(\K) \leq - \chi_4(\K).
\end{equation}
As a 4-dimensional counterpart of $\delta_3$ we introduce the following.
\begin{definition}
Define 
$$\delta_4(\K)= \frac{1}{2} (-\chi_4(\K)-SL(\K))$$
and call it the {\em defect of the slice Bennequin inequality} for the topological link type $\K$.
\end{definition}

Here is a basic well-known fact (see for example \cite{EV}):
\begin{proposition}\label{prop:delta4=0}
Every quasipositive link $\K$ has $\delta_4(\K)=0$. 
\end{proposition}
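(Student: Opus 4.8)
The plan is to sandwich $\delta_4(\K)$ between two bounds that both turn out to be sharp. The slice Bennequin inequality already gives $SL(\K)\le-\chi_4(\K)$, hence $\delta_4(\K)\ge 0$, so it suffices to exhibit a single transverse representative $\T$ of $\K$ together with a smoothly embedded surface $\Sigma\subset B^4$ with $\partial\Sigma=\K$ such that $sl(\T)=-\chi(\Sigma)$. Indeed, one then has $-\chi_4(\K)\le-\chi(\Sigma)=sl(\T)\le SL(\K)\le-\chi_4(\K)$, which forces every term to be equal, and in particular $-\chi_4(\K)-SL(\K)=0$.

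First I would fix a quasipositive braid word $K=\prod_{k=1}^{\ell}w_k\sigma_{j_k}w_k^{-1}\in B_n$ whose closure is $\K$, and regard the corresponding closed braid as a transverse link $\T$ via Bennequin's correspondence. Since each factor $w_k\sigma_{j_k}w_k^{-1}$ is a conjugate of a positive generator, it contributes $+1$ to the exponent sum, so $w(K)=\ell$ and therefore $sl(\T)=-n+\ell$.

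Next I would appeal to Rudolph's construction of the quasipositive surface associated to $K$: beginning with $n$ parallel disks pushed slightly into the interior of $B^4$ and attaching one band for each factor $w_k\sigma_{j_k}w_k^{-1}$ — with the conjugating word $w_k$ realized by letting the band dip into the $4$-ball so that the bands become pairwise disjoint — one obtains a smoothly embedded surface $\Sigma\subset B^4$ with $\partial\Sigma=\K$ and $\chi(\Sigma)=n-\ell$. This is the $4$-dimensional analogue of the Bennequin surface $\Sigma_K$. Thus $-\chi(\Sigma)=\ell-n=sl(\T)$, and the chain of inequalities above closes up, giving $SL(\K)=-\chi_4(\K)$ and hence $\delta_4(\K)=0$.

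The only genuine input is the existence of the embedded quasipositive surface with $\chi=n-\ell$, which is Rudolph's theorem; no deep gauge-theoretic result (such as the local Thom conjecture) is needed here, because we only require a \emph{lower} bound on $\chi_4(\K)$ and any explicitly constructed surface supplies one. The steps that need care are purely bookkeeping: matching the sign and writhe conventions so that the equality $sl(\T)=-\chi(\Sigma)$ holds exactly, and verifying that the bands implementing the conjugators $w_k$ can indeed be isotoped off one another inside $B^4$ so that $\Sigma$ is embedded rather than merely immersed.
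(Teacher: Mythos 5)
Your argument is correct and is precisely the standard proof behind this fact, which the paper does not write out but cites to Etnyre--Van Horn-Morris: a quasipositive $n$-braid with $\ell$ banded factors bounds Rudolph's quasipositive surface $\Sigma\subset B^4$ with $\chi(\Sigma)=n-\ell$, and the chain $-\chi_4(\K)\le -\chi(\Sigma)=sl(\T)\le SL(\K)\le -\chi_4(\K)$ forces equality throughout. One small caveat on your closing remark: it is not quite right that no deep gauge-theoretic result is needed, since the explicit surface only supplies the lower bound on $\chi_4(\K)$, while the upper bound $SL(\K)\le -\chi_4(\K)$ that closes the chain is exactly the slice Bennequin inequality of Lisca--Mati\'c and Akbulut--Matveyev (equivalently, a consequence of the local Thom conjecture) --- you are of course entitled to quote it, as the paper states it just before the proposition, but the proof as a whole does rest on it.
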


Using the contrapositive of Proposition~\ref{prop:delta4=0} we can detect non-quasipositive links: 

\begin{corollary}
The following ten knots, whose non-quasipositivity was previously unknown according to KnotInfo are non-quasipositive. 
\[ 11_{n37},\ 12_{n120},\ 12_{n199},\ 12_{n200},\ 12_{n260},\ 12_{n312},\ 12_{n397},\ 12_{n414},\ 12n_{523},\ 12n_{549}\] 
\end{corollary}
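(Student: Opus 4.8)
The plan is to apply the contrapositive of Proposition~\ref{prop:delta4=0}: if $\delta_4(\K) > 0$ then $\K$ is not quasipositive. Each of the ten listed links is a knot, so $\chi_4(\K) = 1 - 2g_4(\K)$, where $g_4(\K)$ denotes the smooth $4$-genus, and therefore
\[ \delta_4(\K) = \tfrac12\bigl(2g_4(\K) - 1 - SL(\K)\bigr). \]
By the slice Bennequin inequality this quantity is $\ge 0$, and for a knot both $2g_4(\K)-1$ and $SL(\K)$ are odd, so $\delta_4(\K)$ is a nonnegative integer; hence it suffices, for each of the ten knots, to establish the strict inequality $SL(\K) \le 2g_4(\K) - 3$.

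First I would tabulate the maximal self-linking number $SL(\K)$ for the ten knots. These values are recorded in KnotInfo; where a self-contained verification is wanted, the upper bound $SL(\K) \le -\beta(\K) + w(\K)$ follows from a minimal braid representative together with the generalized Jones conjecture recalled above, and the Morton--Franks--Williams inequality applied to the HOMFLY polynomial gives a matching bound. Next I would tabulate a lower bound for the smooth $4$-genus $g_4(\K)$, also drawn from KnotInfo, where it is certified by standard smooth concordance obstructions such as the Rasmussen invariant $s$, the Heegaard Floer invariant $\tau$, or the Tristram--Levine signatures; in fact $g_4$ is known exactly for all knots of at most $12$ crossings. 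Comparing the two tables verifies $SL(\K) \le 2g_4(\K) - 3$ in each of the ten cases, so $\delta_4(\K) \ge 1 > 0$ and Proposition~\ref{prop:delta4=0} shows that none of these knots is quasipositive.

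The content of the argument is thus a finite comparison of known invariants; the only points requiring care are that $\chi_4$ (hence the $g_4$ used) must be the \emph{smooth} invariant, since the slice Bennequin inequality — and with it Proposition~\ref{prop:delta4=0} — can fail for the topological $4$-genus, and that the tabulated $SL$ must be the genuine maximum over all transverse representatives rather than the self-linking number of one particular braid. Both issues are settled in the literature for knots through $12$ crossings, so I do not expect any genuine obstacle beyond assembling the data; the main ``work'' is simply checking that for these ten knots the gap between $-SL(\K)$ and $-\chi_4(\K)$ is positive, i.e. that the slice Bennequin inequality is not sharp.
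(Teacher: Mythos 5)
Your proposal is correct and is essentially the paper's argument: the corollary is stated as an immediate application of the contrapositive of Proposition~\ref{prop:delta4=0}, with the positivity of $\delta_4$ for each of the ten knots certified by the tabulated values of $SL$ and the smooth $4$-genus. Your additional remarks on the parity of $\delta_4$ for knots and on the smooth-versus-topological distinction are sound but do not change the route.
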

 
The converse of  Proposition \ref{prop:delta4=0} has been asked by a number of people (see \cite[Question 7.3]{EV}, for example). 

\begin{question}\label{question:d4=0}
Does $\delta_4(\K)=0$ imply that $\K$ is quasipositive?
\end{question}

In Section \ref{sec:tables}, we give a list of knots with $\delta_4(\K)=0$ up to 12 crossings. Here for the knots $12_{n239}$ and $12_{n512}$, $\delta_{4}(\K)$ is only known to be either $0$ or $1$, and we could not determine they are quasipositive or not. However, with this two exceptions, we confirmed that all of knots with $\delta_4(\K)=0$ up to 12 crossings are indeed quasipositive. Consequently, the answer to the question is ``Yes'' for knots up to 12 crossings (possibly two exceptions). 

Also, we may ask a generalization of this question: 

\begin{question}\label{question:d4=1}
Does $\delta_4(\K)=1$ imply that $\K$ is almost quasipositive?
\end{question}

\subsection{Relations among positivities and summary of our results}

In Figure \ref{fig:chart}, we summarize several (conjectural) relations among various positivities and results of this paper.  
Dotted arrows represent conjectural implications.

Shortly before publicizing this paper, the authors learned that in \cite{FLL}, Feller, Lewark, and Lobb prove that almost positive links are strongly quasipositive.

\begin{figure}[htbp]
\begin{center}
\includegraphics*[width=150mm, bb=148 546 463 713]{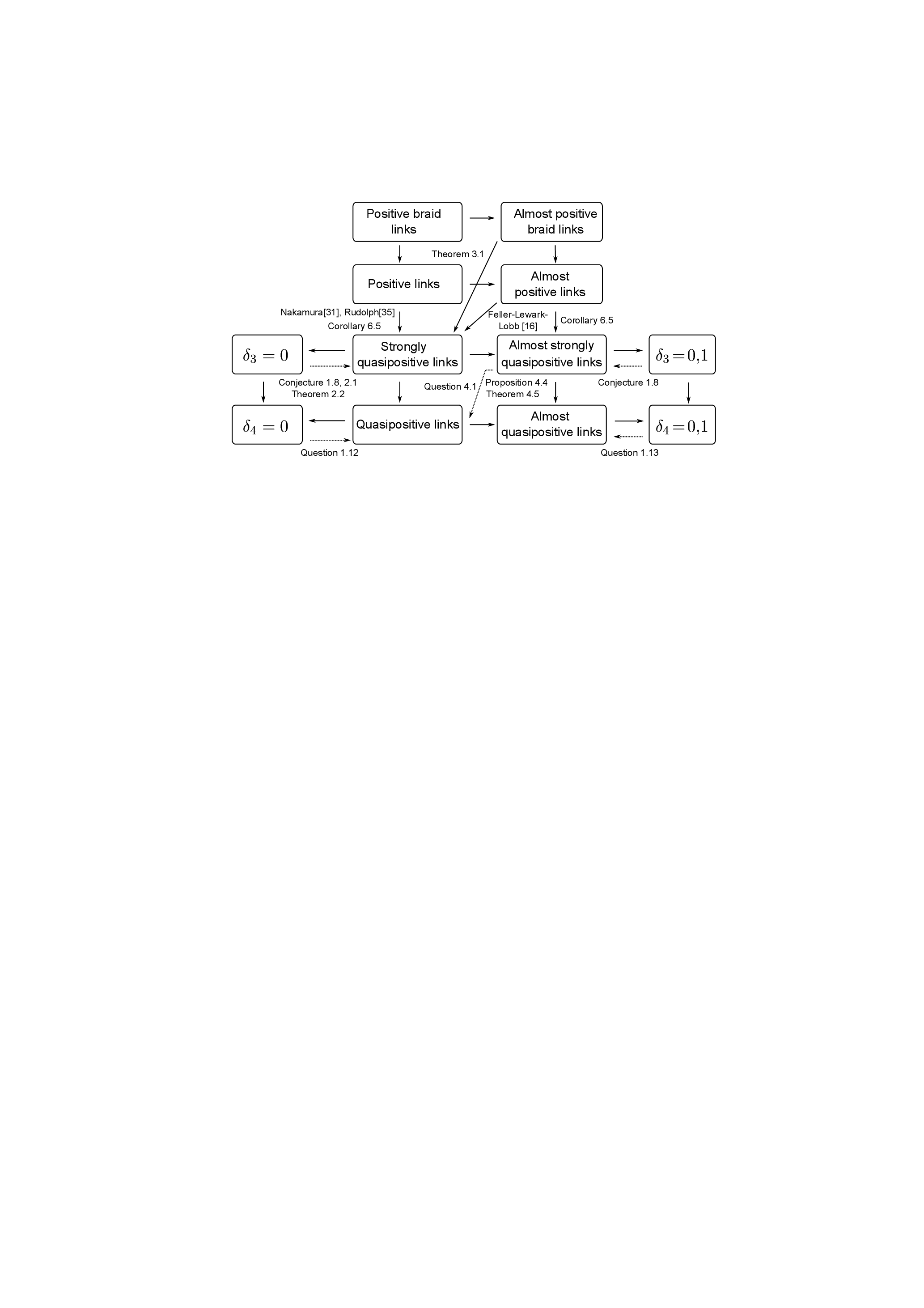}
\caption{(Conjectural) relations among various positivities. The implication (*) is  
 }
\label{fig:chart}
\end{center}
\end{figure}

We will also study these positivities for alternating or, more generally, homogeneous links. 
When $\K$ is homogeneous, it is often conjectured that a weaker positivity implies a stronger one as summarized in Figure \ref{fig:chart2}. In particular, we expect that all the properties listed in Figure \ref{fig:chart2} are equivalent.
\begin{figure}[htbp]
\begin{center}
\includegraphics*[width=150mm, bb=148 570 463 713]{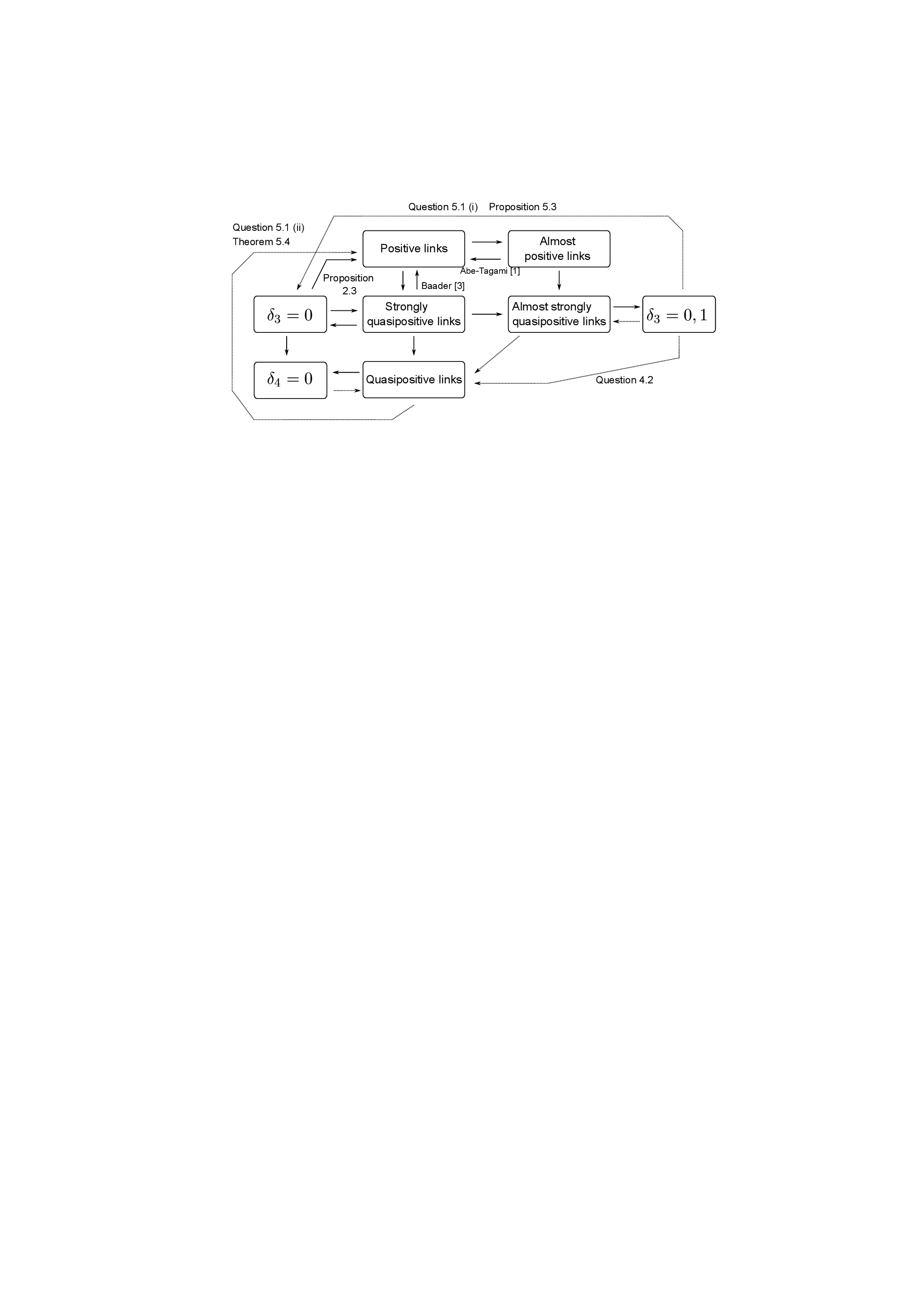}
\caption{(Conjectural) relations among various positivities for alternating (or homogeneous) links.}
\label{fig:chart2}
\end{center}
\end{figure}

\begin{remark}
The knot data we used from KnotInfo was derived from the November 10, 2016 version. Currently, KnotInfo is up to date with the quasipositive/strongly quasipositive data resulting from our work and documented in the tables of Section \ref{sec:tables}.
\end{remark}

\section{Links with $\delta_3=0$}

The following conjecture on links with $\delta_3(\K)=0$ can be found in \cite{IK-bennequin-bound} as ``Stronger Form of Conjecture 2''. 

\begin{conjecture}\label{conj:SQP}
We have $\delta_3(\mathcal K)=0$ $($equivalently $SL(\K) = -\chi(\K))$ if and only if there exists a braid representative $K$ of $\K$ such that $K$ realizes the braid index $\beta(\K)$ of $\K$ and $K$ is strongly quasipositive. 
\end{conjecture}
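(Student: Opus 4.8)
The plan is to prove the two implications separately. The forward implication is essentially immediate, so the real content lies in the converse.

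For the ``if'' direction I would argue directly from the Bennequin inequality. If $K$ is a strongly quasipositive $n$-braid with $\beta(\K)=n$ and word length $\ell$ in band generators, then $\Sigma_K$ is built from $n$ disks and $\ell$ positively twisted bands, so $\chi(\Sigma_K)=n-\ell$, $w(K)=\ell$, and hence $sl(K)=-n+\ell=-\chi(\Sigma_K)$. Since $\chi(\Sigma_K)\le\chi(\K)$ and, by the Bennequin inequality, $sl(K)\le SL(\K)\le -\chi(\K)$, the chain $-\chi(\Sigma_K)=sl(K)\le SL(\K)\le -\chi(\K)\le -\chi(\Sigma_K)$ collapses to a string of equalities, giving $SL(\K)=-\chi(\K)$, i.e. $\delta_3(\K)=0$. (This uses only that $K$ is strongly quasipositive; the braid-index hypothesis is harmless but not needed for this direction.)

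For the ``only if'' direction, assume $\delta_3(\K)=0$, that is $SL(\K)=-\chi(\K)$. First I would invoke Conjecture~\ref{SQPconjecture}: applied to a transverse representative $\T$ of $\K$ with $sl(\T)=SL(\K)$, it produces a braid word $K'$ with $\chi(\Sigma_{K'})=\chi(\K)$ and $sl(K')=SL(\K)$. A one-line band count then shows $\Sigma_{K'}$ has no negative bands: writing $n'$, $p$, $q$ for the numbers of strands, positive bands, and negative bands, the relations $\chi(\Sigma_{K'})=n'-p-q=\chi(\K)$ and $sl(K')=-n'+p-q=-\chi(\K)$ force $q=0$. So $K'$ is already strongly quasipositive, and moreover $w(K')-n'=w(\K)-\beta(\K)$. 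The remaining, essential task is to replace $K'$ by a strongly quasipositive braid on exactly $\beta(\K)$ strands. The approach would be a controlled destabilization process---positive destabilizations, exchange moves, and conjugations/slides of band generators---each step preserving strong quasipositivity and never increasing the strand number, using the generalized Jones' conjecture both to keep the writhe in check and to rule out a forced negative stabilization; since each positive destabilization lowers $n'$ by one while fixing $sl$, reaching $\beta(\K)$ strands yields the sought representative.

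The main obstacle is exactly this last reduction. There is currently no general mechanism guaranteeing that a strongly quasipositive braid can be brought down to its braid index while remaining strongly quasipositive: the destabilizations exposed by the Markov theorem without stabilization need not live on a band word, and rewriting a band word to expose one may create negative bands (even verifying that a positive destabilization of a strongly quasipositive braid stays strongly quasipositive is not automatic). I would therefore expect a real proof to rest on a structural analysis of minimal-length strongly quasipositive band words realizing $\chi(\K)$---most plausibly via the Birman--Menasco braid foliation machinery, or via Rudolph-type moves on quasipositive Seifert ribbons---and this is precisely the step that keeps the statement at the level of a conjecture.
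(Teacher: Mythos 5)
The statement you are asked to prove is labelled as a \emph{conjecture} in the paper, and the paper offers no proof of it --- only supporting evidence (the $3$-braid case via Birman--Menasco, the strongly homogeneous case via Proposition~\ref{prop:TFAE}, and a computational verification for knots up to $12$ crossings). Your proposal does not close that gap. The ``if'' direction you give is correct and complete: for a strongly quasipositive braid $K$ one has $sl(K)=-\chi(\Sigma_K)$, and the chain $-\chi(\Sigma_K)=sl(K)\le SL(\K)\le-\chi(\K)\le-\chi(\Sigma_K)$ forces $SL(\K)=-\chi(\K)$; as you note, the braid-index hypothesis is not needed there, and this is exactly the observation the paper makes when it records that $\delta_3(K)=0$ for strongly quasipositive $K$.

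The ``only if'' direction, however, is not a proof. It rests on two unestablished inputs. First, you invoke Conjecture~\ref{SQPconjecture}, which is itself an open conjecture of the same paper (and in fact, for $\delta_3(\K)=0$, the existence of a strongly quasipositive representative realizing $\chi(\K)$ is essentially equivalent to the weak form of what you are trying to prove, so this step is close to circular rather than a reduction). Second, even granting a strongly quasipositive representative $K'$ with $\chi(\Sigma_{K'})=\chi(\K)$, the passage from $K'$ to a strongly quasipositive braid on exactly $\beta(\K)$ strands is precisely the missing mechanism: no known destabilization or braid-foliation argument guarantees that strand reduction can be carried out within the class of strongly quasipositive band words, and the generalized Jones' conjecture only controls the writhe, not positivity of the bands. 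You identify this obstacle candidly, which is to your credit, but it means the argument establishes only the easy implication; the conjecture remains open, exactly as the paper presents it.
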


As for 3-braid links, Conjecture~\ref{conj:SQP} holds thanks to Birman and Manasco \cite[Theorem 1]{BM}: 

\begin{theorem}\cite[Corollary~1.10]{IK-bennequin-bound}
Let $\K$ be a link type of braid index $\beta(\K)=3$ and $K$ a $3$-braid representative of $\K$. The following are equivalent.
\begin{itemize}
\item The Bennequin inequality (\ref{eq:Benne-ineq}) is sharp for $K$;
\item $K$ is braid isotopic to a strongly quasipositive braid;
\item $\K$ is strongly quasipositive. 
\end{itemize}
\end{theorem}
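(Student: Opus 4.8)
The plan is to peel off three of the implications using only the Bennequin inequality and the generalized Jones conjecture, leaving a single substantive implication to be handled by the Birman--Menasco classification of closed $3$-braids.

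Two directions are immediate and do not even use that the braid has three strands. For $(2)\Rightarrow(1)$: if $K$ is a strongly quasipositive $n$-braid representing $\K$, then $\Sigma_K$ is built from $n$ disks and $w(K)$ positive bands, so $\chi(\Sigma_K)=n-w(K)=-sl(K)$, and the chain $sl(K)\le SL(\K)\le-\chi(\K)\le-\chi(\Sigma_K)=sl(K)$ collapses, which is precisely sharpness of (\ref{eq:Benne-ineq}) for $K$; and $(2)\Rightarrow(3)$ is the definition. For $(3)\Rightarrow(1)$: a strongly quasipositive representative of $\K$ on any number of strands has, by the same computation, $sl=-\chi(\K)=SL(\K)$, so $\delta_3(\K)=0$; since our fixed $3$-braid $K$ realizes the braid index $\beta(\K)=3$, the generalized Jones conjecture gives $sl(K)=-\beta(\K)+w(\K)=SL(\K)=-\chi(\K)$, which is (1). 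Thus everything reduces to proving $(1)\Rightarrow(2)$: from sharpness of (\ref{eq:Benne-ineq}) for a single $3$-braid $K$ one must exhibit a strongly quasipositive $3$-braid conjugate to $K$.

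To do this I would invoke \cite[Theorem~1]{BM}. Conjugation preserves $\K$ and $w(K)$, hence $sl(K)$, so I may replace $K$ by its normal form $\Delta^{2d}w$, with $\Delta=\sigma_1\sigma_2\sigma_1$ and $w$ ranging over the explicit Birman--Menasco list: the generic words $\sigma_1\sigma_2^{-a_1}\cdots\sigma_1\sigma_2^{-a_k}$ with $a_i\ge 0$, together with the finitely many exceptional families $\sigma_2^{m}$, $\sigma_1^{m}\sigma_2^{-1}$, and the like. For each normal form $sl(K)=-3+w(K)$ is read off directly, while $-\chi(\K)$ is supplied by the known computation of the Seifert genus of closed $3$-braid links; imposing $sl(K)=-\chi(\K)$ then pins $(d,w)$ down to a short list of families. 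The crux is that every surviving family consists of braids conjugate to a positive word in the band generators $\sigma_{1,2}=\sigma_1$, $\sigma_{2,3}=\sigma_2$, $\sigma_{1,3}=\sigma_2\sigma_1\sigma_2^{-1}$; for instance $\Delta=\sigma_{1,2}\sigma_{1,3}\sigma_{2,3}$, so the full twist $\Delta^{2}$ is strongly quasipositive, and the equality $sl(K)=-\chi(\K)$ is exactly what forces $d$ and the negative part of $w$ into the range where such a rewriting exists. Producing these positive band-generator words, without increasing the strand number, yields $(1)\Rightarrow(2)$.

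The main obstacle is the case analysis for $(1)\Rightarrow(2)$: one must carry the Euler-characteristic bookkeeping through every family in the Birman--Menasco list, being careful that the Bennequin surface $\Sigma_{\Delta^{2d}w}$ of a normal form need not be of minimal genus, so $-\chi(\K)$ genuinely has to be computed independently (this is where the exceptional families --- torus links, connected sums, cables --- must be treated by hand), and one must handle the borderline exponents $d=0$ and $d=-1$, where $\Delta^{2d}w$ sits on the boundary between strongly quasipositive and not. In each of the finitely many equality families one then has to exhibit the explicit strongly quasipositive $3$-braid word, which is a bounded amount of braid algebra.
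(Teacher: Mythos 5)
First, note that the paper does not prove this statement here at all: it is imported verbatim from \cite[Corollary~1.10]{IK-bennequin-bound}, with the substantive input attributed to Birman--Menasco \cite{BM}. Your reductions of $(2)\Rightarrow(1)$, $(2)\Rightarrow(3)$ and $(3)\Rightarrow(1)$ are correct: the band count $sl=-\chi(\Sigma)$ for a strongly quasipositive braid, combined with the chain $sl\leq SL(\K)\leq-\chi(\K)\leq-\chi(\Sigma)$ and with the generalized Jones conjecture to see that the given $3$-braid $K$ attains $SL(\K)$, is exactly the right amount of machinery and matches how the paper uses these tools elsewhere.

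The gap is in $(1)\Rightarrow(2)$, the only nontrivial implication, which you leave as an unexecuted plan. There are two problems. (a) You invoke \cite[Theorem~1]{BM} but describe the content of a different result, namely the Murasugi/Birman--Menasco \emph{classification} of closed $3$-braids into normal forms $\Delta^{2d}w$ with $w$ running over explicit families in the standard generators. What the cited theorem (together with \cite[Theorem~6]{X}) actually supplies, and what \cite[Corollary~1.10]{IK-bennequin-bound} actually rests on, is that every $3$-braid is conjugate to a word in Xu's normal form in the \emph{band generators} $\sigma_1,\sigma_2,\sigma_{1,3}$ whose Bennequin surface realizes $\chi(\K)$. (b) Even granting your classification route, the decisive step --- that imposing $sl(K)=-\chi(\K)$ pins the parameters down to families that are all conjugate to positive band words --- is asserted rather than proved, and it requires an independent computation of $\chi(\K)$ for every family (including the degenerate ones); this is precisely the bookkeeping you yourself flag as the main obstacle. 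With the correct input the implication takes two lines: conjugate $K$ to its Xu normal form $K'$ with $p$ positive and $n$ negative bands; then $sl(K)=sl(K')=p-n-3$ while $-\chi(\K)=-\chi(\Sigma_{K'})=p+n-3$, so sharpness of (\ref{eq:Benne-ineq}) forces $n=0$ and $K'$ is itself the required strongly quasipositive conjugate. No case analysis and no separate treatment of exceptional families is needed.
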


A weak version of Conjecture \ref{conj:SQP} that does not take the braid index condition into account holds for homogeneous links (Definition~\ref{def:homogeneous}). 
Moreover, when $\K$ is strongly homogeneous (Definition \ref{def:strongly-homogeneous}),  Conjecture \ref{conj:SQP} holds.

\begin{proposition}\label{prop:TFAE}
Suppose that $\K$ is a homogeneous link. 
Then the following are equivalent:
\begin{enumerate}
\item[(i)] $SL(\K)=-\chi(\K)$ (equivalently $\delta_{3}(\K)=0).$
\item[(ii)] $\K$ is a positive link. 
\item[(iii)] $\K$ is strongly quasipositive. 
\end{enumerate}
Moreover, if $\K$ is strongly homogeneous then the statements {\em(i), (ii), (iii)} and 
\begin{enumerate}
\item[(iv)] $\K$ is represented by a strongly quasipositive $\beta(\K)$-braid. 
\end{enumerate}
are equivalent.
\end{proposition}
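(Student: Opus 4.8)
The plan is to establish the cycle of implications $(iii) \Rightarrow (i) \Rightarrow (ii) \Rightarrow (iii)$ for the homogeneous case, and then a separate $(i)\Rightarrow(iv)$ (with $(iv)\Rightarrow(iii)$ being trivial) for the strongly homogeneous case. The implication $(iii)\Rightarrow(i)$ is immediate: if $\K$ is strongly quasipositive, it is represented by a strongly quasipositive braid word $K$, whose Bennequin surface $\Sigma_K$ has only positive bands, so $\chi(\Sigma_K)=\chi(\K)$ and $sl(K)=-n+w(K)=-n+(\text{length of }K)=\chi(\Sigma_K)=\chi(\K)$; hence $\delta_3(K)=0$, so $\delta_3(\K)=0$ and $SL(\K)=-\chi(\K)$. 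The implication $(ii)\Rightarrow(iii)$ is also essentially known: a positive link is strongly quasipositive (for instance, via Rudolph's observation that positive diagrams yield strongly quasipositive braids after Yamada–Vogel-type manipulation, or directly from the braiding process which preserves the sign of crossings).

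The heart of the homogeneous case is $(i)\Rightarrow(ii)$. Here I would start from a homogeneous diagram $D$ of $\K$. By the useful property \eqref{eq:useful property}, $\chi(F_D)=\chi(\K)$. The canonical Seifert surface $F_D$ decomposes along the blocks of the Seifert graph $G_D$, and by homogeneity each block is either all-positive or all-negative. The idea is to show that if a negative block were present, one could build a transverse representative with $sl$ strictly larger than $-\chi(\K)$, contradicting $(i)$ (the Bennequin inequality \eqref{eq:Benne-ineq} would be violated, or rather, $SL(\K)=-\chi(\K)$ would force equality and pin down the combinatorics). Concretely: the writhe of $D$, the number of Seifert circles $s(D)$, and the Euler characteristic are related by $\chi(F_D)=s(D)-c(D)$ where $c(D)$ is the crossing number; separating the positive and negative crossing counts $c=c_+ + c_-$, a Bennequin-type computation on the braid obtained from $D$ by Yamada's algorithm shows $sl \geq w(D) - s(D) + 1$ (or the appropriate sharp version), and one argues that the presence of any negative crossing in a homogeneous diagram forces $-\chi(\K) > SL(\K)$ unless $c_-=0$; thus $D$ must already be positive. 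The main obstacle is controlling how $sl$ behaves under the braiding of a homogeneous diagram — one needs the block structure to guarantee that negative crossings can be "counted against" the defect without interfering, and this is where the homogeneity hypothesis (every block monochromatic) is essential rather than cosmetic.

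For the strongly homogeneous case, assume $(i)$, hence by the above $\K$ is a positive link with a positive — in fact, by the strong homogeneity hypothesis, we may take a homogeneous (now necessarily positive, by $(i)\Rightarrow(ii)$ applied carefully) diagram $D$ realizing $s(D)=\beta(\K)$. Applying Yamada's theorem, $D$ converts to a braid $K$ on $s(D)=\beta(\K)$ strands whose Bennequin surface is isotopic to $F_D$; since $D$ is positive, $K$ is a positive braid, hence strongly quasipositive, on exactly $\beta(\K)$ strands. This gives $(iv)$. The converse $(iv)\Rightarrow(iii)$ is trivial, and $(iv)\Rightarrow(i)$ follows from $(iii)\Rightarrow(i)$, closing the enlarged equivalence. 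The only subtlety here is checking that the positive diagram produced in the homogeneous step can be taken to still realize the braid index — but this is exactly what the strong homogeneity hypothesis supplies, once one verifies that the Seifert-circle-reducing moves in the $(i)\Rightarrow(ii)$ argument do not increase $s(D)$, or alternatively that one may start directly from the braid-index-realizing homogeneous diagram and run the positivity argument on it in place.
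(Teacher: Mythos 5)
Your outer structure ((iii)$\Rightarrow$(i) by counting bands, (ii)$\Rightarrow$(iii) via Rudolph/Nakamura, and the strongly homogeneous case via the writhe computation on a braid-index-realizing diagram) matches the paper and is fine. The problem is the central implication (i)$\Rightarrow$(ii), where your diagrammatic argument has a genuine gap. Starting from an arbitrary homogeneous diagram $D$ of $\K$, Yamada's algorithm (Corollary \ref{theorem:canonicalBennequin}) produces a closed $s(D)$-braid $K$ with $sl(K)=w(D)-s(D)=w_+(D)-w_-(D)-s(D)$, while \eqref{eq:useful property} gives $-\chi(\K)=-\chi(F_D)=w_+(D)+w_-(D)-s(D)$. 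But all this yields is $-\chi(\K)-sl(K)=2w_-(D)$ with $sl(K)\leq SL(\K)$, hence $\delta_3(\K)\leq w_-(D)$ --- an inequality pointing the wrong way. The hypothesis $\delta_3(\K)=0$ then tells you nothing about $w_-(D)$: you cannot conclude that $D$ ``must already be positive.'' The step only closes when $sl(K)=SL(\K)$, which by Corollary \ref{cor:diagram-sl} is guaranteed precisely when $s(D)$ is minimal, i.e.\ in the \emph{strongly} homogeneous case; a homogeneous link need not admit a homogeneous diagram realizing the braid index, and indeed the conclusion (ii) asserts only that \emph{some} positive diagram exists, not that the given $D$ is positive. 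You flag ``controlling how $sl$ behaves under the braiding'' as the main obstacle, but that obstacle is exactly the missing content, and the block structure alone does not supply it.

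The paper avoids this entirely by routing (i)$\Rightarrow$(ii) through the HOMFLYPT polynomial: the Morton--Franks--Williams inequality gives $SL(\K)+1\leq \min\deg_v P_{\K}(v,z)$, Cromwell's theorem for homogeneous links gives $\min\deg_v P_{\K}(v,z)\leq 1-\chi(\K)$, so $SL(\K)=-\chi(\K)$ forces equality $\min\deg_v P_{\K}=1-\chi(\K)$, which by the same theorem of Cromwell characterizes positivity among homogeneous links. To repair your write-up you would either need to import this polynomial argument (or some substitute for it) for the general homogeneous case, or restrict your diagrammatic computation to the strongly homogeneous setting, where it does work and is in fact what the paper uses to prove (i)$\Rightarrow$(iv).
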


Proposition \ref{prop:TFAE} is proved in Section 6.

Here we give more supporting evidence for Conjecture \ref{conj:SQP}: 
Let $\K$ be a knot of crossing number less than or equal to 12 satisfying the equality $SL(\K) = -\chi(\K)$. 
Let $K$ denote a braid representative of $\K$ realizing $\beta(K)=\beta(\K)$. 
Such a braid word $K$ is listed in KnotInfo \cite{KnotInfo} and written in the standard generators. 
Let $n$ be the total number of negative letters appearing in the braid word $K$.
Therefore, the Bennequin surface $\Sigma_K$ associated to $K$ contains $n$ negative bands.  
We observe that there exists a sequence of Bennequin surfaces $F_0=\Sigma_K, F_1, \dots, F_m$ where $m\geq n$ such that $F_m$ is a strongly quasipositive Bennequin surface for $\K$, and 
the surface $F_{i+1}$ is obtained from $F_i$ either by 
(i) a flype,
(ii) sliding bands, or
(iii) compressing a disk that is attached to a negative band in $F_i$ so that $F_{i+1}$ has one less negative band and one less positive band.

Here, a \emph{flype} is an operation which changes a braid of the form
$\sigma_{1}^{\varepsilon}v \sigma_{1}^{m}w$ into $\sigma_{1}^{m}v\sigma_{1}^{\varepsilon}w$, where $m \in \Z$, $\varepsilon \in \{\pm 1\}$ and $w,v$ are words in $\{\sigma_{2}^{\pm 1},\ldots,\sigma_{n-1}^{\pm 1}\}$. 
(An exchange move is regarded as a special type of flype move where $m=-\varepsilon$.)  
A flype is called \emph{positive} if $\varepsilon = +1$, and called  \emph{negative} if $\varepsilon = -1$. 
A negative flype in general changes the transverse link type but does preserve the number of negative bands and positive bands in the Bennequin surface.

\begin{example}
Let $K=\sigma_1\ \sigma_1\ \sigma_2\ \sigma_1^{-1}\ \sigma_2\ \sigma_3\ \sigma_2^{-1}\ \sigma_4\ \sigma_3\ \sigma_3\ \sigma_4\ \sigma_5^{-1}\ \sigma_4\ \sigma_3^{-1}\ \sigma_4\ \sigma_5\ \sigma_5$ that represents the knot $12_{a 974}.$ 
Apply two negative flypes and obtain
$$\sigma_1^{-1}\  \sigma_2\ \sigma_1\ \sigma_1\ \sigma_2\ \sigma_3\ \sigma_2^{-1}\ \sigma_4\ \sigma_3\ \sigma_3\ \sigma_4\ \sigma_5\ \sigma_5\ \sigma_4\ \sigma_3^{-1}\ \sigma_4\ \sigma_5^{-1}.$$ 
Figure \ref{fig:flype-example} gives the progression from this negatively-flyped braid word to the following strongly quasipositive braid word (in band generators):
\[\sigma_{1,2}\sigma_{2,3}\sigma_{3,4}\sigma_{1,6}\sigma_{1,4}\sigma_{4,5}\sigma_{1,5}\sigma_{5,6}\sigma_{4,6}.\] 
This progression is characteristic of how many of the strongly quasipositive (and quasipositive) braid words appearing in our tables in Section \ref{sec:tables} were obtained. Each picture is obtained from the next via a series of slides (indicated by arrows), and compressions (indicated by shaded regions).
\end{example}

\begin{figure}[htbp]
\centering
\includegraphics[width=0.65\textwidth]{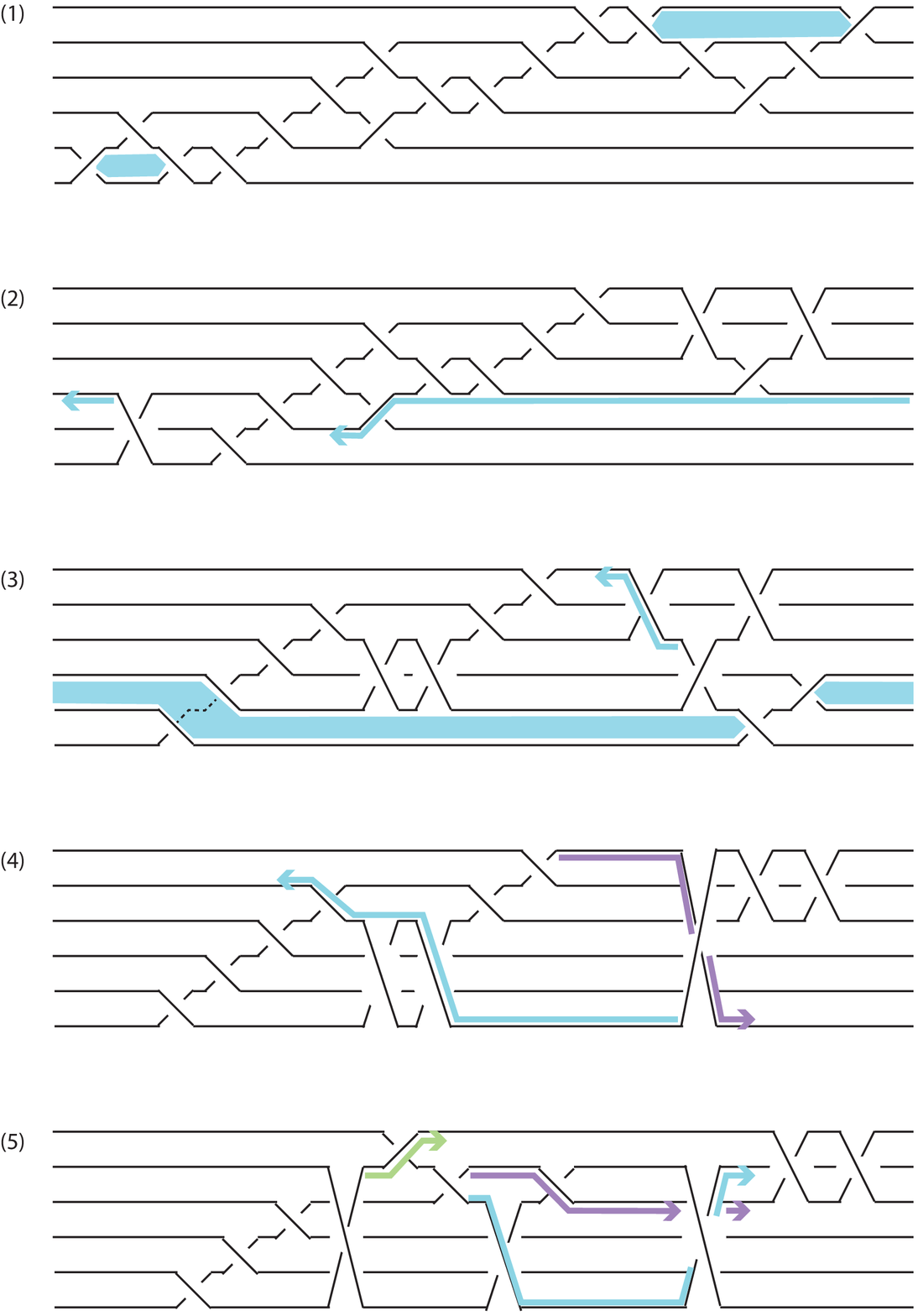}
\includegraphics[width=0.65\textwidth]{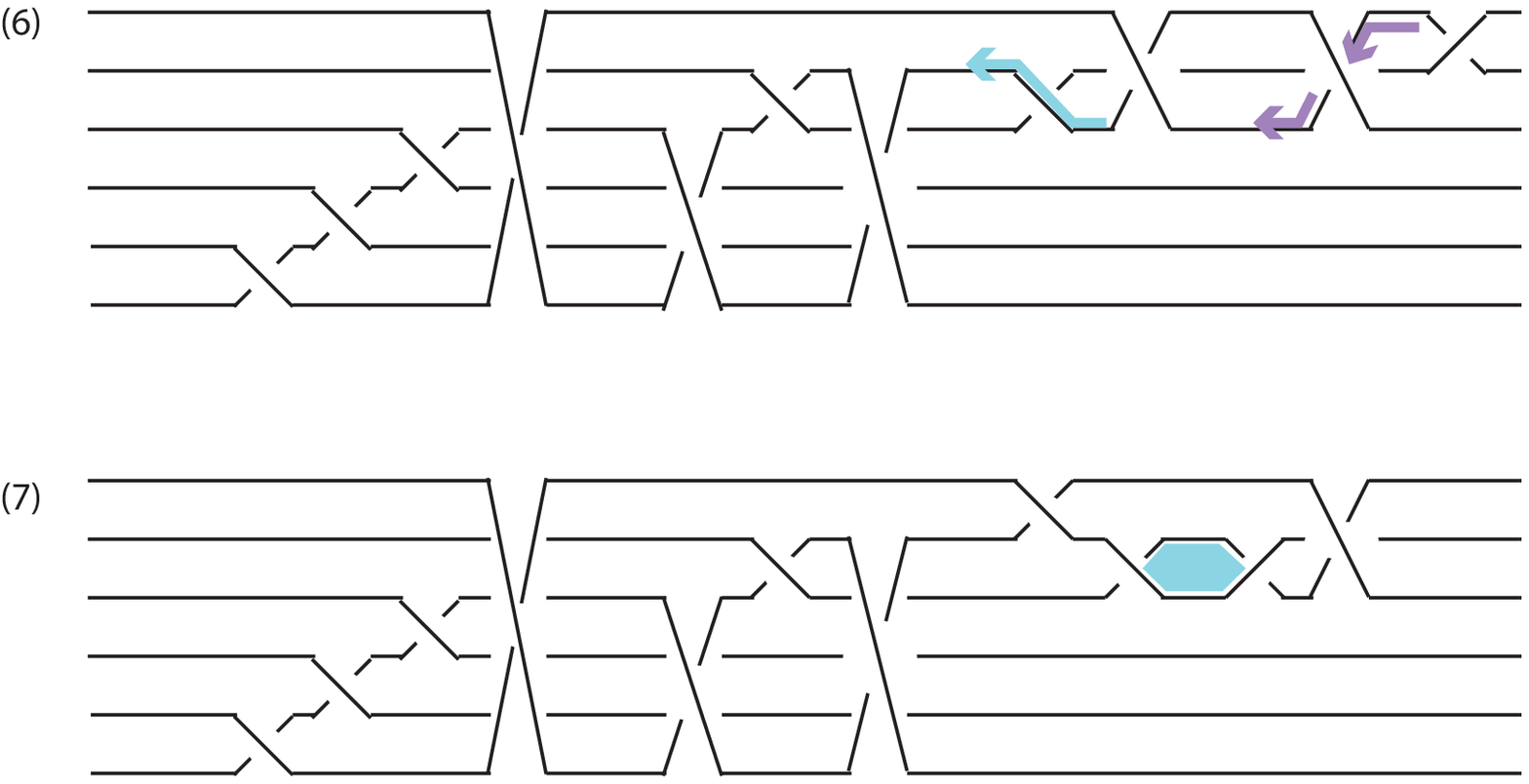}
\caption{Progression from a negatively-flyped braid in standard generators to a strongly quasipositive representative. The arrows indicate the trajectories of slide moves, while the shaded regions indicate the interiors of compression disks. Compression effectively removes a pair of oppositely-signed bands.}\label{fig:flype-example}
\end{figure}

In this way, we can determine that the following thirteen knots, whose strong quasipositivity was previously unknown, are strongly quasipositive. 
$$12_{n148}, 12_{n149}, 12_{n293}, 12_{n321}, 12_{n332}, 12_{n366}, 12_{n404}, 12_{n432}, 12_{n528}, 12_{n642}, 12_{n660}, 12_{n801}, 12_{830n}.$$
Moreover, we are able to find strongly quasipositive words for all the strongly quasipositive knots up to 12 crossing as listed in Section~\ref{subsec-Table-SQP}. 
To aid in completing the list, we use the following Theorems~\ref{1negative} and \ref{separating} that give not only sufficient conditions for strong quasipositivity, but also give explicit strongly quasipositive words.

\section{Almost positive braid links}

In this section we study almost positive braid links. 
Our main result is the following.

\begin{theorem}\label{1negative}
Every almost positive braid link is strongly quasipositive. 
\end{theorem}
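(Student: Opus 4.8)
The plan is to work with the closure $\widehat K$ of a braid word $K$ in $\sigma_1^{\pm 1},\dots,\sigma_{n-1}^{\pm 1}$ using at most one negative letter, and to rewrite it as the closure of a positive word in band generators, i.e. as a strongly quasipositive braid. First I would dispose of the case with no negative letter: then $K$ is positive, and since $\sigma_i=\sigma_{i,i+1}$ it is already a positive band word, so $\widehat K$ is strongly quasipositive. Assuming exactly one negative letter, a cyclic permutation of $K$ (which does not change $\widehat K$) puts it in the form $K=\sigma_k^{-1}w$ with $w$ a positive braid word. The argument then splits according to whether $\sigma_k$ occurs in $w$.

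If $\sigma_k$ does not occur in $w$, then $w=w_Lw_R$ with $w_L$ a positive word in $\sigma_1,\dots,\sigma_{k-1}$ and $w_R$ a positive word in $\sigma_{k+1},\dots,\sigma_{n-1}$ (these commute). In the Bennequin surface $\Sigma_K$ the unique negative band then joins the two subsurfaces carried by $w_L$ and $w_R$ on the disk sets $\{1,\dots,k\}$ and $\{k+1,\dots,n\}$, hence is a separating band, and $\widehat K$ is a connected sum of the positive braid links $\widehat{w_L}$ and $\widehat{w_R}$ together with split unknot summands coming from any unused strands. Since strong quasipositivity is preserved under connected sum and split union (elementary in terms of band words, cf. Rudolph), $\widehat K$ is strongly quasipositive.

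The main case is when $\sigma_k$ occurs in $w$; write $w=w_1\sigma_k w_2$ with $w_1$ the prefix of $w$ preceding its first $\sigma_k$. Conjugating by $\sigma_k$ gives $K\approx w_2\,(\sigma_k^{-1}w_1\sigma_k)$, and inserting $\sigma_k\sigma_k^{-1}$ between the letters of $w_1$ one has $\sigma_k^{-1}w_1\sigma_k=\prod_i(\sigma_k^{-1}\sigma_{j_i}\sigma_k)$, where
\[ \sigma_k^{-1}\sigma_j\sigma_k=\sigma_j\ \ (|j-k|\ge 2),\qquad \sigma_k^{-1}\sigma_{k+1}\sigma_k=\sigma_{k,k+2},\qquad \sigma_k^{-1}\sigma_{k-1}\sigma_k=\sigma_{k-1}\sigma_k\sigma_{k-1}^{-1}. \]
The first two outputs are positive band generators; the third is not of this form. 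Hence, if $w_1$ avoids $\sigma_{k-1}$, then $w_2(\sigma_k^{-1}w_1\sigma_k)$ is a positive band word and $\widehat K$ is strongly quasipositive. Geometrically this says that the negative band $\sigma_k^{-1}$ slides freely down the braid — past every band except a $\sigma_{k-1}$-band attached to disk $k$ — until it becomes parallel to a $\sigma_k$-band, whereupon compressing the disk between the two bands deletes both and leaves a positive Bennequin surface.

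The delicate subcase is when $w_1$ does contain a $\sigma_{k-1}$, so the negative band is obstructed. Here I would push it past the obstruction via the identity $\sigma_k^{-1}\sigma_{k-1}=\sigma_{k-1}\,\sigma_{k-1,k+1}^{-1}$, which absorbs the positive band $\sigma_{k-1}$ into the positive part and replaces $\sigma_k^{-1}$ by the wider negative band $\sigma_{k-1,k+1}^{-1}$ joining disks $k-1$ and $k+1$; one then continues to slide this wider band (and, symmetrically, slides in the opposite direction when the obstruction is a $\sigma_{k+1}$-band), at each stage either reaching the separating configuration above or bringing the negative band parallel to a positive band and compressing it away into a positive braid. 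I expect the main obstacle to be proving that this sliding process terminates: one has to identify the correct complexity measure — the obvious candidates (word length, the number of bands of $\Sigma_K$) are not decreased by an "absorb-and-widen" step — and to verify that at every stage the required band-generator identity holds and a legitimate flype, band slide, or compression is available. In effect the proof is the flype/slide/compression bookkeeping illustrated by the Example preceding the theorem, carried out systematically.
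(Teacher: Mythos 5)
Your first three cases are fine: the all-positive case, the separating case (where $\sigma_k$ occurs only once, negatively), and the case where the prefix $w_1$ before the first positive $\sigma_k$ avoids $\sigma_{k-1}$, so that $\sigma_k^{-1}w_1\sigma_k$ is literally a positive word in the band generators $\sigma_j$ and $\sigma_{k,k+2}$. These agree in substance with the easy reductions in the paper. But the remaining subcase --- every positive $\sigma_k$ shielded by a $\sigma_{k-1}$ (and, after applying the $\Delta$-symmetry or sliding leftward instead, by a $\sigma_{k+1}$ as well; e.g.\ $\sigma_2^{-1}\sigma_1\sigma_3\sigma_2\sigma_1\sigma_3$ in $B_4$) --- is exactly where the content of the theorem lies, and your proposal does not prove it. The ``absorb-and-widen'' step $\sigma_k^{-1}\sigma_{k-1}=\sigma_{k-1}\sigma_{k-1,k+1}^{-1}$ is a correct identity, but after it the negative band $\sigma_{k-1,k+1}^{-1}$ has a strictly larger set of obstructing letters ($\sigma_{k-2}$ and $\sigma_{k+1}$ at its endpoints, and $\sigma_{k-1},\sigma_k$ interacting with its interior), so the band can keep widening and there is no guarantee it ever becomes parallel to a positive band it can cancel. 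As you yourself note, no monotone complexity is exhibited, so the process is not shown to terminate; this is a genuine gap, not a routine verification.

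What is missing is a global, rather than local, cancellation. The paper first normalizes $K$ (cyclic permutation, braid relations, and conjugation by $\Delta$ to force the second letter to be $\sigma_{i-1}$) into a staircase form $\sigma_i^{-1}\sigma_{i-1}X_{i-1}\sigma_{i-2}X_{i-2}\cdots\sigma_1X_1$, commutes the entire descending staircase $\sigma_{i-1}\cdots\sigma_1$ to the front in one move --- turning the single negative letter into the single wide band $\sigma_{1,i+1}^{-1}$ once and for all --- and then invokes Lemma~\ref{lemma:SQP1}. The engine of that lemma is the identity
\[
\sigma_{1,i}^{-1}\,(\sigma_{i-1,i}\sigma_{i-2,i-1}\cdots\sigma_{1,2})=\sigma_{i-1,i}\cdots\sigma_{2,3},
\]
i.e.\ the negative band is cancelled against a whole staircase of positive bands simultaneously (after sliding the interleaved positive subwords $X_j$ out of the way), not against a single positive $\sigma_i$. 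Your iterative scheme has no analogue of this step, and without it (or some substitute termination argument) the proof of the main case is not complete.
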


We also show a useful condition for strongly quasipositive links. 

\begin{theorem}\label{separating}
Let $n\geq 4$. 
Let $K=w \sigma_1^{-1} w' \sigma_{n-1}^{-1}$ be an $n$-braid where $w$ and $w'$ are positive words in $\{\sigma_1, \ \sigma_{n-1},\ \sigma_{i,j}  \: | \: 2\leq i<j\leq n-1\}.$ 
Then $K$ can be negatively destabilized to a strongly quasipositive braid, or it is conjugate to a strongly quasipositive braid.
\end{theorem}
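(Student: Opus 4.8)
The plan is to reduce the general case to the case $n=4$ by an induction on the number of strands, and to handle the base case(s) by direct manipulation of band words. Write $K = w\,\sigma_1^{-1}\,w'\,\sigma_{n-1}^{-1}$ with $w,w'$ positive in the alphabet $\{\sigma_1,\sigma_{n-1},\sigma_{i,j}\mid 2\le i<j\le n-1\}$. Note first that the band generators $\sigma_{i,j}$ with $2\le i<j\le n-1$ and the letter $\sigma_1 = \sigma_{1,2}$ together never touch strand $n$, while $\sigma_{n-1}=\sigma_{n-1,n}$ is the only letter touching strand $n$. So strand $n$ enters the braid exactly once, through the single band $\sigma_{n-1}^{-1}$. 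The first step is to exploit this: conjugating $K$ cyclically we may assume the word ends in $\sigma_{n-1}^{-1}$, and since no other letter involves strand $n$, strand $n$ is a ``trivial loop'' attached by one negative band. I would then argue that one of two things happens. Either strand $n-1$ is also only lightly used — in which case the $\sigma_{n-1}^{-1}$ can be cancelled against a neighboring positive occurrence and we negatively destabilize, dropping to an $(n-1)$-braid of the same shape $w\sigma_1^{-1}w'$ (now with $w'$ possibly empty, which is the strongly quasipositive case handled by Theorem~\ref{1negative}); or strand $n-1$ is genuinely used by bands in $w'$ (or $w$), and then I want to push the $\sigma_{n-1}^{-1}$ through those bands using the band-generator relations $\sigma_{j,k}\sigma_{i,j}=\sigma_{i,k}\sigma_{j,k}$ etc., absorbing it into a single negative band $\sigma_{1,n}^{-1}$ or $\sigma_{i,n}^{-1}$ that, together with the positive bands, rewrites as a strongly quasipositive word after one destabilization or a conjugation.

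Concretely, the key algebraic lemma I expect to need is: if $u$ is a positive band word in $\{\sigma_{i,j}\mid 2\le i\le n-1, j\le n-1\}$ (not touching strand $n$) and we form $\sigma_{i_0,n}\, u\, \sigma_{n-1,n}^{-1}$ where $\sigma_{i_0,n}$ is a positive band to strand $n$, then this is conjugate (or destabilizes) to a positive band word on strand set $\{1,\dots,n-1\}$ — i.e., the one negative band to strand $n$ can be ``slid off'' and cancelled, at the cost of conjugating the rest. This is the band-word analogue of the slide/compression moves illustrated in Figure~\ref{fig:flype-example}: geometrically, strand $n$ bounds a disk met by one positive band and one negative band on the same strand $n$, and compressing that disk removes the oppositely-signed pair. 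I would formalize this compression as: in $\Sigma_K$, strand $n$ is a disk $D_n$ meeting exactly the bands coming from the $\sigma_{n-1}$-letters; if there is a positive such band and the negative one $\sigma_{n-1}^{-1}$, the subsurface they span with $D_n$ compresses, decreasing both counts by one and leaving a Bennequin surface for an $(n-1)$-braid.

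The main obstacle, and where the hypotheses $n\ge 4$ and the precise form of $w,w'$ matter, is ensuring that after pushing $\sigma_{n-1}^{-1}$ past the bands in $w'$ we really land on a word that is either strongly quasipositive or destabilizable, rather than just ``one negative band of type $\sigma_{i,n}^{-1}$'' which might still obstruct. The delicate point is that $w'$ sits between the two negative letters, so when I conjugate to bring $\sigma_1^{-1}$ and $\sigma_{n-1}^{-1}$ together I disturb $w'$; I need the band relations to guarantee that a positive band word conjugated by a positive band word, with the two boundary negative bands being $\sigma_1^{-1}=\sigma_{1,2}^{-1}$ and $\sigma_{n-1}^{-1}=\sigma_{n-1,n}^{-1}$ living on the two ``ends'' of the strand range, can always be brought to the form $v\,\sigma_{1,n}^{-1}$ with $v$ positive, and that $\sigma_{1,n}^{-1}$ then destabilizes. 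This last step — that a single negative band spanning from strand $1$ all the way to strand $n$, appended to an otherwise positive band word, can be removed by a negative destabilization or conjugation — is precisely the structural fact I would isolate as the technical heart of the argument, and I would prove it by induction on $n$ starting from an explicit $n=4$ computation, tracking how the band $\sigma_{1,n}$ interacts with each band generator via the two defining relations of the band presentation.
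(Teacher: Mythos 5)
Your opening observations are sound and match the easy cases of the paper's proof: strand $1$ is touched only by $\sigma_1^{\pm1}$ and strand $n$ only by $\sigma_{n-1}^{\pm1}$, so if $w,w'$ contain no positive $\sigma_{n-1}$ (resp.\ no positive $\sigma_1$) the corresponding negative letter destabilizes. One correction even here: after destabilizing you are left with $w\sigma_1^{-1}w'$ where $w,w'$ are positive words in \emph{band} generators, so this is an almost \emph{strongly quasipositive} braid, not an almost positive braid, and Theorem~\ref{1negative} does not apply to it; the paper instead feeds this word into Lemma~\ref{lemma:SQP1} (after a cyclic permutation putting it in the form $\sigma_{1,2}^{-1}X_0X_2\sigma_{1,2}X_1$).

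The main case --- both $\sigma_1$ and $\sigma_{n-1}$ occur positively --- is where your proposal has a genuine gap. The entire argument there rests on a ``key algebraic lemma'' that you neither state precisely nor prove, and the versions you do sketch are false. First, $\sigma_{i_0,n}\,u\,\sigma_{n-1,n}^{-1}$ with $u$ a positive band word not touching strand $n$ does \emph{not} in general reduce to a positive word by sliding and cancelling: if $u$ touches strand $n-1$, the relation $\sigma_{n-1,n}^{-1}\sigma_{i_0,n}=\sigma_{i_0,n-1}\sigma_{n-1,n}^{-1}$ cannot be applied without first commuting $\sigma_{n-1,n}^{-1}$ past $u$, which it does not do; geometrically the compressing disk you describe is obstructed by the bands of $u$ attached to strand $n-1$. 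Second, even if the negative band on strand $n$ were removed, you would still hold an almost strongly quasipositive word with the remaining $\sigma_1^{-1}$, and concluding that \emph{this} is strongly quasipositive is precisely the content of the missing lemma --- a word $v\,\sigma_{1,n}^{-1}$ with $v$ positive does not destabilize unless $v$ avoids strand $n$, and is not strongly quasipositive in general (that such words should even be quasipositive is the open Question~\ref{question:almostsqp}). The paper's proof avoids all of this by isolating and proving Lemma~\ref{lemma:SQP1}: the word $\sigma_{1,i}^{-1}X_0X_i\sigma_{i-1,i}X_{i-1}\cdots\sigma_{1,2}X_1$, with the $X_k$ constrained as stated, is strongly quasipositive, shown by an explicit chain of band slides (\ref{eqn:lemma-start})--(\ref{eqn:lemma-conclusion}); the theorem is then reduced to that lemma by conjugating a positive $\sigma_{n-1}$ across the subword containing $\sigma_1^{-1}$ and relabelling $\sigma_{a,n-1}\mapsto\sigma_{a,n}$. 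Your proposal contains no analogue of that computation, and without it the argument does not close.
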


To prove the above results we observe the following.
\begin{lemma}
\label{lemma:SQP1}
Fix an $i \in\{2, \dots, n\}$. 
Let $K$ (see Figure \ref{fig:lemma1} (a)) be an $n$-braid word
\begin{equation}
\label{eqn:word2}
K = \sigma_{1,i}^{-1}\ X_0\ X_i\ \sigma_{i-1,i}\ X_{i-1}\ \sigma_{i-2,i-1}\ X_{i-2}\cdots \sigma_{1,2}\ X_1
\end{equation}
where 
\begin{itemize}
\item $X_0$ is a (possibly empty) positive word  in $\{\sigma_{a,b}\: | \: 1\leq a < b\leq i-1 \}$. 
\item For $k=1, \dots, i$, $X_{k}$ is a  (possibly empty) positive word  in $\{\sigma_{a,b} \: | \: k\leq a<b\leq n\}$.
\end{itemize}
Then $K$ is a strongly quasipositive braid.
\end{lemma}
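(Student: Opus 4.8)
\textbf{Proof proposal for Lemma~\ref{lemma:SQP1}.}

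The plan is to argue by induction on $i$, peeling off the leftmost negative band $\sigma_{1,i}^{-1}$ by letting it ``travel'' to the right through the positive bands until it meets its positive partner $\sigma_{i-1,i}$ (or more generally one of the $\sigma_{k-1,k}$), at which point a cancellation or a telescoping of band generators eliminates the negative letter. Concretely, the base case $i=2$ reads $K = \sigma_{1,2}^{-1} X_0 X_2 \sigma_{1,2} X_1$ with $X_0$ empty (no bands $\sigma_{a,b}$ with $1\le a<b\le 1$), $X_2$ a positive word in $\{\sigma_{a,b}\mid 2\le a<b\le n\}$, and $X_1$ a positive word in $\{\sigma_{a,b}\mid 1\le a<b\le n\}$; here $\sigma_{1,2}^{-1}$ commutes past $X_2$ (disjoint strand supports, using the far-commutation relation $\sigma_{i,j}\sigma_{k,l}=\sigma_{k,l}\sigma_{i,j}$ for $i<j\le k<l$ — note $X_2$ uses indices $\ge 2$ so its generators either commute with $\sigma_{1,2}$ or share only strand $2$), and after conjugating to move $X_1$ around one is left with $X_0' X_2 X_1'$, manifestly a positive band word.

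The inductive step is the heart of the matter. Assuming the statement for $i-1$, I would take the word $K$ in~\eqref{eqn:word2} and use the band-generator relations to commute $\sigma_{1,i}^{-1}$ rightward past $X_0$ (whose generators have indices in $\{1,\dots,i-1\}$, so they interact with $\sigma_{1,i}$ only through strands $\le i-1$) and past $X_i$ (indices $\ge i$, so these commute with $\sigma_{1,i}^{-1}$ outright or share only strand $i$), until $\sigma_{1,i}^{-1}$ is adjacent to $\sigma_{i-1,i}$. Then I apply the triangle relation $\sigma_{i-1,i}\sigma_{1,i-1} = \sigma_{1,i}\sigma_{i-1,i} = \sigma_{1,i-1}\sigma_{1,i}$ (with $1<i-1<i$), rewritten as $\sigma_{1,i}^{-1}\sigma_{i-1,i} = \sigma_{i-1,i}\sigma_{1,i-1}^{-1}\cdot(\text{something positive})$ — more precisely $\sigma_{1,i}^{-1}\sigma_{i-1,i}\sigma_{1,i-1} = \sigma_{i-1,i}$, so that absorbing the factor $\sigma_{1,i-1}$ already present (it is the next relevant positive band further right, or can be produced by a conjugation/slide) converts the pair $\sigma_{1,i}^{-1}\ \sigma_{i-1,i}$ into $\sigma_{i-1,i}$ followed by a \emph{new} leftover negative band $\sigma_{1,i-1}^{-1}$ sitting further to the right. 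Carefully tracking which positive words the new $\sigma_{1,i-1}^{-1}$ must pass, and relabeling, the word takes the shape of~\eqref{eqn:word2} with $i$ replaced by $i-1$ (the chain $\sigma_{i-2,i-1}X_{i-2}\cdots\sigma_{1,2}X_1$ is already in place, and the displaced positive bands get absorbed into the $X_k$'s), so the inductive hypothesis applies. One must also handle the degenerate possibility that the negative band is annihilated before reaching the end of the chain (a direct cancellation $\sigma_{1,k}^{-1}\sigma_{1,k}$), which only makes the word shorter and still positive.

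The main obstacle I anticipate is bookkeeping: making sure at each commutation and each application of the triangle relation that (a) the strand-support hypotheses on $X_0$ and the $X_k$ are genuinely preserved — in particular that the positive bands swept over by the migrating negative band remain in the allowed index ranges after relabeling — and (b) the conjugations used to cyclically reposition words do not secretly introduce negative bands. A clean way to organize this is to prove a slightly more general statement that keeps explicit track of the strand index $k$ of the current negative band $\sigma_{1,k}^{-1}$ and the partition of the positive bands into a ``left block'' (indices $<k$, the analogue of $X_0$) and per-strand blocks $X_k,\dots,X_1$, then verify the single-step transition $k\mapsto k-1$ respects this data; this reduces the whole argument to one invariant-preservation check plus the trivial base case. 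It may also help to phrase the band manipulations geometrically on the Bennequin surface $\Sigma_K$, where ``commuting a band past another'' and ``the triangle relation'' are band slides, and strong quasipositivity of the result is visible from the surface.
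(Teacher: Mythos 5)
Your proposal is correct, and it reorganizes the paper's computation rather than replacing it: both arguments live entirely inside the band-generator relations and both cancel $\sigma_{1,i}^{-1}$ against the staircase $\sigma_{i-1,i}\sigma_{i-2,i-1}\cdots\sigma_{1,2}$. The paper slides the positive blocks $X_k$ \emph{rightward across the staircase} (conjugating each $X_k$ by $\sigma_{k-1}\cdots\sigma_1$ to get $X_k'$), assembles the full staircase next to the negative band, and kills it in one shot via $\sigma_{1,i}^{-1}(\sigma_{i-1,i}\cdots\sigma_{1,2})=\sigma_{i-1,i}\cdots\sigma_{2,3}$; you instead migrate the negative band rightward and cancel one rung at a time, via $\sigma_{1,i}^{-1}\sigma_{a,b}=\sigma_{a,b}\sigma_{1,i}^{-1}$ (nested), $\sigma_{1,i}^{-1}\sigma_{1,b}=\sigma_{b,i}\sigma_{1,i}^{-1}$, $\sigma_{1,i}^{-1}\sigma_{i,b}=\sigma_{1,b}\sigma_{1,i}^{-1}$, and the key step $\sigma_{1,i}^{-1}\sigma_{i-1,i}=\sigma_{i-1,i}\sigma_{1,i-1}^{-1}$. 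Your bookkeeping worry resolves more cleanly than you fear: after one such step the word is $\tilde X_0\,\tilde X_i\,\sigma_{i-1,i}\cdot\bigl(\sigma_{1,i-1}^{-1}X_{i-1}\sigma_{i-2,i-1}\cdots\sigma_{1,2}X_1\bigr)$, whose parenthesized suffix is \emph{literally} an instance of (\ref{eqn:word2}) for $i-1$ with empty $X_0$ and the remaining $X_k$ untouched, and a positive band word times a strongly quasipositive braid is strongly quasipositive, so the induction closes with no relabeling. Two small corrections: the remark about ``absorbing the factor $\sigma_{1,i-1}$ already present'' is a red herring (no such positive band is present further right; the identity $\sigma_{1,i}^{-1}\sigma_{i-1,i}=\sigma_{i-1,i}\sigma_{1,i-1}^{-1}$ alone does the job), and you should drop the cyclic repositionings/conjugations you mention in the base case and in your invariant discussion --- conjugation can destroy strong quasipositivity of a braid (only the closure is preserved), the lemma asserts that $K$ itself is strongly quasipositive, and none of your moves actually require conjugation.
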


\begin{proof}
Let us put
\begin{eqnarray*}
X'_0 &:=& (\sigma_{i-1}\cdots \sigma_{1})^{-1}X_0 (\sigma_{i-1}\cdots \sigma_{1}) \mbox{ and } \\
X'_k &:=& (\sigma_{k-1}\cdots \sigma_{1})^{-1}X_k(\sigma_{k-1}\cdots \sigma_{1}) \mbox{ for } k=1, \dots, i.
\end{eqnarray*}
We view $X'_{0}$ as a positive word  in $\{\sigma_{a,b} \: | \: 2\leq a < b\leq i\}$ obtained from $X_0$ by replacing each  $\sigma_{a,b}$ with $\sigma_{a+1,b+1}$, and $X'_{k}$ as a positive word  in $\{\sigma_{a,b}, \sigma_{1, c} \: | \:  k+1 \leq a<b \leq n,\ k+1 \leq c \leq n\}$ obtained from $X_k$ by replacing each $\sigma_{k,b}$ with $\sigma_{1,b}$.
In particular, $X_1'=X_1$.

Then we have 
\begin{align}
\label{eqn:lemma-start}
K &\sim \sigma_{1,i}^{-1}X_0(X_i \sigma_{i-1,i})\cdots (X_4\sigma_{3,4})(X_3\sigma_{2,3})(X_2\sigma_{1,2})X_1\\
\nonumber &\sim \sigma_{1,i}^{-1}X_0(X_i \sigma_{i-1,i})\cdots (X_4\sigma_{3,4})(X_3\sigma_{2,3})(\sigma_{1,2}X'_{2}) X'_1\\ 
\nonumber&\sim\sigma_{1,i}^{-1}X_0(X_i \sigma_{i-1,i})\cdots (X_4\sigma_{3,4})(\sigma_{2,3} \sigma_{1,2})X'_3 X'_2 X'_1\\
\nonumber&\sim \cdots\\
\label{eqn:lemma-slided} &\sim \sigma_{1,i}^{-1}X_0(\sigma_{i-1,i}\sigma_{i-2,i-1}\cdots \sigma_{1,2})X'_{i}X'_{i-1}\cdots X'_1\\
\nonumber&\sim  \sigma_{1,i}^{-1}(\sigma_{i-1,i}\sigma_{i-2,i-1}\cdots \sigma_{1,2})X'_0 X'_{i}X'_{i-1}\cdots X'_1\\
\label{eqn:lemma-conclusion}&\sim(\sigma_{i-1,i}\cdots \sigma_{2,3}) X'_0X'_{i}X'_{i-1}\cdots X'_1.
\end{align}
Recall that $a \sim b$ means that the braid words $a$ and $b$ represent the same braid in $B_{n}$. 
Hence $K$ is strongly quasipositive.

We may understand the equivalence of (\ref{eqn:lemma-start}) and  (\ref{eqn:lemma-slided}) as the passage (a) $\to$ (b) in Figure \ref{fig:lemma1}, where the bands in $X_k$ slide across the bands $\sigma_{1,2},\ldots,\sigma_{i-1,i}$ in (\ref{eqn:word2}). The equivalence of  (\ref{eqn:lemma-slided}) and (\ref{eqn:lemma-conclusion}) is shown in the transition depicted in Figure~\ref{fig:lemma1} (b) $\to$ (c) where the thick gray arc is tightened across the braid box $X_0$.
\begin{figure}[htbp]
\begin{center}
\includegraphics*[width=100mm]{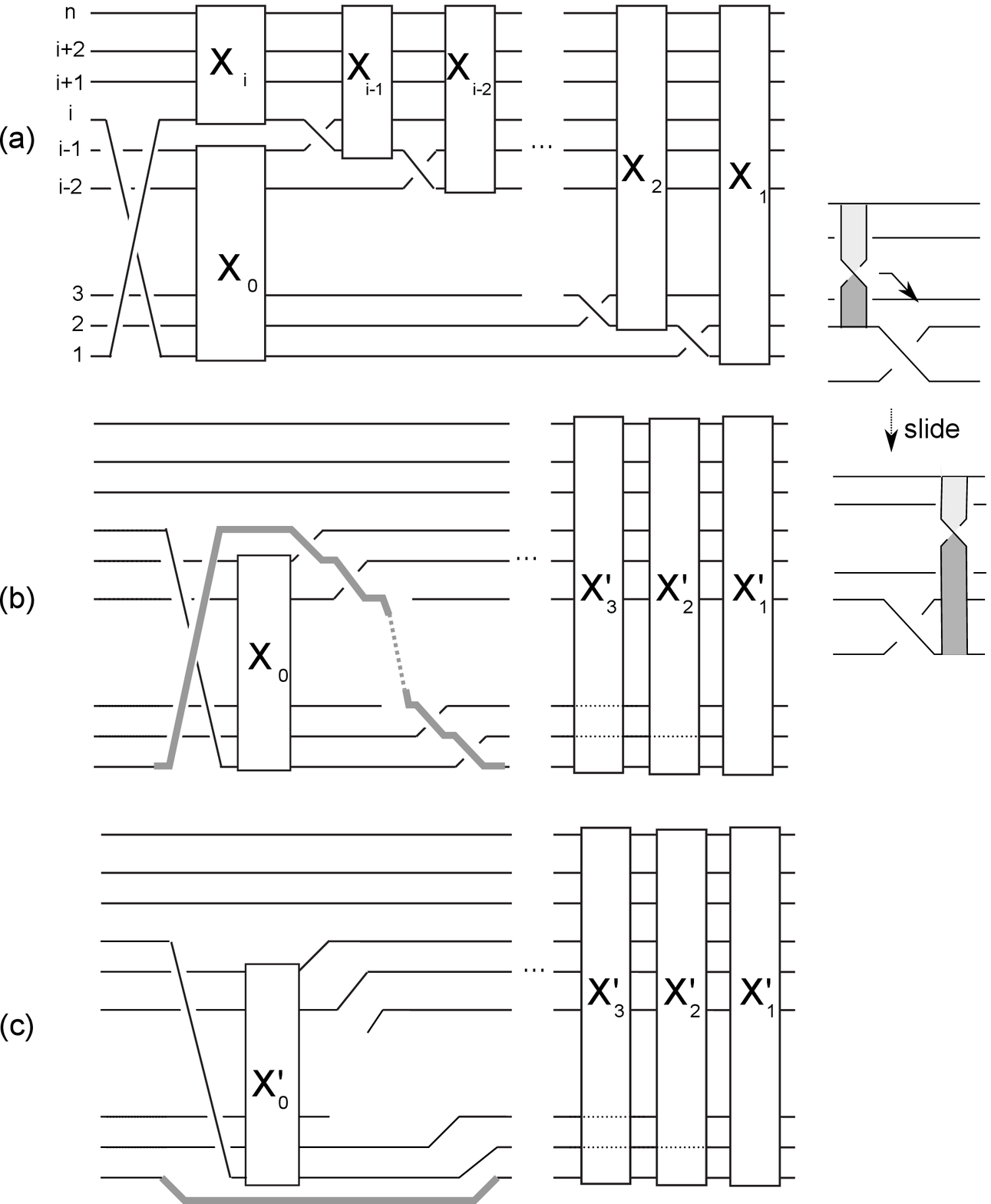}
\caption{Proof of Lemma \ref{lemma:SQP1}. }
\label{fig:lemma1}
\end{center}
\end{figure}

\end{proof}

Using Lemma \ref{lemma:SQP1} we first prove Theorem~\ref{1negative} then Theorem~\ref{separating}.

\begin{proof}[Proof of Theorem~\ref{1negative}]

Let $K$ be an almost positive braid word which contains exactly one negative generator $\sigma_{i}^{-1}$.

If $K$ does not contain $\sigma_i$ then $K$ can be negatively destabilized to a strongly quasipositive $(n-1)$-braid. Therefore, in the following we assume that $K$ contains one $\sigma_i^{-1}$ and at least one $\sigma_i$. Also, we may assume that $K$ contains all the letters $\sigma_{1}, \ldots,\sigma_{n-1}$.

By a cyclic permutation, we may assume that the first letter of $K$ is $\sigma_{i}^{-1}$. 

If $i=1$ then $K$ is of the form $\sigma_{1}^{-1}P \sigma_1Q$ where $P$ is a positive braid word  in $\{\sigma_{2},\ldots,\sigma_{n-1}\}$ and $Q$ is a positive braid word in $\{\sigma_1,\ldots,\sigma_{n-1}\}$. Since $\sigma_{1}^{-1}P\sigma_1$ is strongly quasipositive $K$ is strongly quasipositive. 

Assume that $i\neq 1$. 
Using the braid relations, we may assume that the second letter of $K$ is either $\sigma_{i-1}$ or $\sigma_{i+1}$. 

Let $\Delta=(\sigma_{1}\sigma_{2}\cdots \sigma_{n-1})(\sigma_1\cdots \sigma_{n-2})\cdots (\sigma_{1}\sigma_{2})\sigma_{1}$ be the positive half twist. 
The word $\Delta w \Delta^{-1}$ is equal to the word obtained by $w$ where every letter $\sigma_j$ is replaced with $\sigma_{n-j}$. 
Therefore, taking conjugation by $\Delta$ if necessary, we may further assume that the second letter of $K$ is $\sigma_{i-1}$.

Then after suitable conjugations if necessary, $K$ can be written in the form
\[ K =\sigma_{i}^{-1}\sigma_{i-1}X_{i-1}\sigma_{i-2}X_{i-2}\cdots \sigma_{1}X_{1},\]
where $X_j$ is a positive word  in $\{\sigma_{j},\ldots,\sigma_{n-1}\}$ and at least one of $X_1,\dots,X_{i-1}$ contains $\sigma_i$. 

For $j=1,\dots,i-1$, let $$X'_{j}:=(\sigma_{j-1}\cdots\sigma_{1})^{-1} X_j (\sigma_{j-1}\cdots\sigma_{1}).$$ 
We view $X_j'$ as a positive word in 
$\{\sigma_{1, j+1}, \ \sigma_a \ | \ a=j+1,\dots, n-1\}$ 
which is obtained from $X_j$ by replacing every $\sigma_{j}$ with $\sigma_{1,j+1}$. 
Therefore, $X'_j$ contains $\sigma_{i}$ if and only if $X_j$ contains $\sigma_i$. 
We may change the braid $K$ as
\begin{align*}
K &\sim  \sigma_{i}^{-1}\sigma_{i-1}X_{i-1}\sigma_{i-2}X_{i-2}\cdots \sigma_{1}X_{1}\\
  &\sim  \sigma_{i}^{-1}\sigma_{i-1}\sigma_{i-2} \cdots \sigma_{1}X'_{i-1}X'_{i-2}\cdots X'_{1}\\
  &\sim(\sigma_{i-1}\sigma_{i-2} \cdots \sigma_{1})\sigma_{1,i+1}^{-1}X'_{i-1}X'_{i-2}\cdots X'_{1}\\
 &\approx  \sigma_{1,i+1}^{-1}X'_{i-1}X'_{i-2}\cdots X'_{1}(\sigma_{i-1}\sigma_{i-2} \cdots \sigma_{1}).
\end{align*}
Recall that $a \approx b$ means $a$ and $b$ are  conjugate braids. 

Since $K$ contains at least one $\sigma_i$, the braid word $X'_{i-1}X'_{i-2}\cdots X'_{1}$ contains at least one $\sigma_i$.
This shows that the braid $\sigma_{1,i+1}^{-1}X'_{i-1}X'_{i-2}\cdots X'_{1}(\sigma_{i-1}\sigma_{i-2} \cdots \sigma_{1})$ admits the expression (\ref{eqn:word2}) in Lemma \ref{lemma:SQP1}, so $K$ is strongly quasipositive.
\end{proof}

\begin{proof}[Proof of Theorem~\ref{separating}]
If $K$ does not contain either $\sigma_{n-1}$ or $\sigma_{1}$ then it is easy to check that $K=w \sigma_1^{-1} w' \sigma_{n-1}^{-1}$ can be negatively destabilized twice to an $(n-2)$-braid $w w'$ which is strongly quasipositive. 

If $K$ contains $\sigma_1$ but no $\sigma_{n-1}$ (or contains $\sigma_{n-1}$ but no $\sigma_1$) then $K$ is negatively destabilizable to an ($n-1$)-braid $w \sigma_1^{-1} w'$. By Lemma~\ref{lemma:SQP1}  $K$ is strongly quasipositive. 

Now we assume that $K$ contains both $\sigma_{n-1}$ and $\sigma_{1}$. 
By conjugation we may assume that the first letter of $K$ is $\sigma_{n-1}$ so $K$ is of the form $K=\sigma_{n-1} X \sigma_{n-1}^{-1}Y$ such that exactly one $\sigma_1^{-1}$ is contained in $K$.

If the braid word $X$ contains $\sigma_1^{-1}$ then we have 
\begin{eqnarray*}
K&=& \sigma_{n-1} (P \sigma_{1}^{-1}Q)\sigma_{n-1}^{-1}R \\
&\sim&\sigma_{n-1} P \sigma_{1}^{-1}\sigma_{n-1}^{-1} ( \sigma_{n-1} Q\sigma_{n-1}^{-1})R\\
&\sim&(\sigma_{n-1} P \sigma_{n-1}^{-1}) \sigma_{1}^{-1}( \sigma_{n-1} Q\sigma_{n-1}^{-1})R,
\end{eqnarray*}
where $P$,$Q$ and $R=Y$ are possibly empty positive words in $\{\sigma_1, \ \sigma_{n-1},\ \sigma_{i,j}  \: | \: 2\leq i<j\leq n-1\}.$ 
Then $P':=\sigma_{n-1}P\sigma_{n-1}^{-1}$ and $Q':=\sigma_{n-1}Q\sigma_{n-1}^{-1}$ are positive words in $\{ \sigma_1,\ \sigma_{n-1}, \  \sigma_{i,j} \: | \: 2\leq i<j \leq n~ (j\neq n-1)\} $, which are obtained from $P$ and $Q$, respectively, by replacing $\sigma_{a,n-1}$ with $\sigma_{a,n}$.
Since $K= P' \sigma_1^{-1}Q'R$ contains at least one $\sigma_1$, up to cyclic permutation, the assumption of Lemma \ref{lemma:SQP1} where $i=2$ is satisfied; hence, $K$ is strongly quasipositive.

If the braid word $Y$ contains $\sigma_1^{-1}$ then a parallel argument works. 
\end{proof} 

\section{Almost strongly quasipositive links}

In light of Theorem \ref{1negative}, it is natural and interesting to ask the following: 

\begin{question}
\label{question:almostsqp}
Is every almost strongly quasipositive link quasipositive?
\end{question}

An almost strongly quasipositive link $\mathcal{K}$ satisfies $\delta_3(\mathcal{K})\leq 1$. Conversely, Conjecture \ref{SQPconjecture} asserts that if $\delta_3(\mathcal{K})\leq 1$ then $\K$ is almost strongly quasipositive. 
Therefore, we may extend Question~\ref{question:almostsqp} to the following question: 

\begin{question}\label{question:d=1}
If $\delta_3(\K)=1$ then is $\K$ quasipositive?
\end{question}

Here is some supporting evidence for Questions \ref{question:almostsqp} and \ref{question:d=1}.

\begin{proposition}\label{prop:evidence-d1}
For knots up to 12 crossings in the knot table, the answers to Questions~\ref{question:almostsqp} and \ref{question:d=1} are ``Yes''. 
More precisely, we observe that    
if $\delta_3(\K)=1$ then $\K$ is quasipositive but not strongly quasipositive. 
\end{proposition}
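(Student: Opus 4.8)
The statement is an empirical claim about the $\leq 12$-crossing knot table, so the proof will be a finite verification rather than a structural argument; the plan is to organize that verification into two directions and locate the part that requires genuine work. First I would assemble, from KnotInfo \cite{KnotInfo}, the complete list of knots $\K$ of crossing number at most $12$ with $\delta_3(\K)=1$. Since $\delta_3(\K) = \tfrac12(-\chi(\K)-SL(\K))$ and both $\chi(\K)$ and $SL(\K)$ are recorded (or computable from the braid words listed there) for all such knots, this list is explicit and finite. Strongly quasipositive knots have $\delta_3=0$, so no knot on this list is strongly quasipositive; that half of the ``more precisely'' claim is immediate and needs no case analysis. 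What remains is to exhibit, for every knot on the list, an explicit quasipositive braid word whose closure is that knot.

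The constructive half is where the work lies, and the method is exactly the one illustrated in the Example following Conjecture~\ref{conj:SQP}: start from the braid representative $K$ listed in KnotInfo realizing $\beta(\K)$, written in standard generators, and apply a sequence of moves — negative and positive flypes, band slides, and compressions of a disk attached to an oppositely-signed pair of bands — to the associated Bennequin surface $\Sigma_K$, driving it toward a surface all of whose bands are conjugates of $\sigma_i$, i.e. a quasipositive Bennequin surface. For many knots on the list this terminates in a strongly quasipositive word after removing all negative bands; for those with $\delta_3=1$ the obstruction to strong quasipositivity is genuine, so instead the target is a quasipositive (not strongly quasipositive) word, and the sufficient conditions of Theorems~\ref{1negative} and~\ref{separating} can be invoked wherever the intermediate braid word falls into one of those forms (e.g. almost positive braid words, or the separating form $w\sigma_1^{-1}w'\sigma_{n-1}^{-1}$), shortcutting the hand manipulation. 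The resulting quasipositive braid words are precisely those tabulated in Section~\ref{sec:tables}, so the proof amounts to: (1) produce the candidate word for each knot; (2) verify its closure has the correct topological type (via a knot invariant package or a sequence of Markov moves back to the KnotInfo representative); (3) verify by inspection that it is a product of conjugates of standard generators, hence quasipositive.

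The main obstacle is step (2) combined with the ad hoc nature of step (1): there is no uniform algorithm producing the quasipositive word, so each knot requires an individually found sequence of flypes, slides, and compressions, and then an independent certification that the braid one lands on actually closes to the intended knot (writhe and strand number pin down $sl$ and hence are a useful consistency check via $sl = -n + w(K)$, but do not by themselves identify the knot type). Organizing this as a reproducible table — listing for each knot the KnotInfo braid, the sequence of moves, and the final quasipositive word, as is done in Section~\ref{sec:tables} — is what makes the finite check trustworthy. A secondary subtlety is confirming that none of the listed knots is secretly strongly quasipositive: this follows formally from $\delta_3>0$, but one should double-check the $\chi(\K)$ and $SL(\K)$ values used to compute $\delta_3$, since an error there would undermine both halves of the conclusion.
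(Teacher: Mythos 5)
Your proposal matches the paper's approach exactly: the paper offers no structural proof but points to the tables of Section~\ref{subsec-tableQP}, which list an explicit quasipositive braid word for every $\leq 12$-crossing knot with $\delta_3(\K)=1$ (found by the same flype/slide/compression procedure you describe), while non-strong-quasipositivity is immediate from $\delta_3(\K)>0$. The only point you leave implicit is that answering Question~\ref{question:almostsqp} also uses the $\delta_3=0$ case, which is covered by the strongly quasipositive tables of Section~\ref{subsec-Table-SQP}.
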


In fact, in Section~\ref{subsec-tableQP} we investigate all knots up to 12 crossings with $\delta_3(\K)=1$. 
Among them, 95 knots are newly discovered to be quasipositive.

The next proposition shows that the answers to Questions~ \ref{question:almostsqp} and \ref{question:d=1} are ``Yes'' for links of braid index 3. 
 
\begin{proposition}\label{prop:3braid-evidence}
If $\beta(\K)=3$ and $\delta_3(\K)=1$ then there exists a $3$-braid representative of $\K$ that is quasipositive but not strongly quasipositive. \end{proposition}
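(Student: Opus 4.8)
The plan is to use the classification of 3-braids up to conjugacy together with the machinery already developed in the excerpt. By the generalized Jones conjecture, a link $\K$ with $\beta(\K)=3$ has a well-defined writhe $w(\K)$ on 3-braid representatives, and $SL(\K) = -3 + w(\K)$. Combining this with the hypothesis $\delta_3(\K)=1$, i.e. $-\chi(\K) - SL(\K) = 2$, we get $\chi(\K) = 1 - w(\K)$. I would first fix a 3-braid representative $K$ of $\K$; by the work of Birman--Menasco on 3-braids (and as used in the $\beta=3$ case of Conjecture~\ref{conj:SQP} quoted above), $K$ is conjugate to a 3-braid in Murasugi/Birman--Menasco normal form, i.e. essentially $\Delta^{2d} \sigma_1^{a_1}\sigma_2^{-b_1}\cdots \sigma_1^{a_k}\sigma_2^{-b_k}$ with $a_i, b_i > 0$ (together with the sporadic families), where $\Delta^2$ is the full twist generating the center of $B_3$.

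Next I would compute both $\chi(\Sigma_K)$ (or the relevant Bennequin-type bound) and $w(K)$ in terms of $d$ and the $a_i, b_i$, and impose the constraint $\delta_3(\K) = 1$. The point is that $\delta_3(\K)=0$ forces (by the $\beta=3$ case of Conjecture~\ref{conj:SQP}, already a theorem via Birman--Menasco) a strongly quasipositive 3-braid, so the case $\delta_3=1$ is the ``next'' case: I expect it to force, after suitable flypes/exchange moves within the 3-braid conjugacy class, a normal form with exactly one ``negative syllable'' left over — something conjugate to a word of the shape $w\,\sigma_1^{-1}$ or $w\,\sigma_2^{-1}$ with $w$ a positive 3-braid word, or more precisely to a strongly quasipositive 3-braid pre-composed with a single negative band-generator-type letter. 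Then I would exhibit this braid explicitly as a product of conjugates of $\sigma_1$ (hence quasipositive): one conjugate of $\sigma_1$ or $\sigma_2$ absorbs the lone negative letter together with an adjacent positive letter (using that a single negative band in an otherwise positive picture can be traded via a slide/flype for a conjugated positive generator, exactly as in the transitions of Lemma~\ref{lemma:SQP1} and the flype examples). To see it is \emph{not} strongly quasipositive, I would invoke $\delta_3(\K)=1\neq 0$: any strongly quasipositive braid representative $K'$ has $\delta_3(K')=0$, and if additionally $\beta(\K)=3$ then such a $K'$ would have to be realizable on 3 strands (this is where one again leans on the $\beta=3$ case of Conjecture~\ref{conj:SQP}, or simply on the fact that a strongly quasipositive $\K$ with $\delta_3=1$ would contradict the definition since strong quasipositivity gives $\delta_3=0$). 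So strong quasipositivity of $\K$ is directly excluded by $\delta_3(\K)=1>0$, and only quasipositivity needs genuine argument.

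Concretely, the steps in order: (1) translate $\delta_3(\K)=1$ and $\beta(\K)=3$ into the numerical constraint $\chi(\K)=1-w(\K)$ via the generalized Jones conjecture; (2) put a 3-braid representative into Birman--Menasco normal form, handling the finitely many sporadic conjugacy families separately; (3) show the constraint forces all but one band to be positive after flypes/exchange moves, i.e. the representative is conjugate to (positive 3-braid word)$\cdot \sigma_j^{-1}$ for $j\in\{1,2\}$; (4) rewrite that single $\sigma_j^{-1}$ together with a neighboring positive generator as one conjugate of $\sigma_j$ via a slide move, producing an explicit quasipositive 3-braid word; (5) observe that strong quasipositivity would give $\delta_3(\K)=0$, a contradiction, so the representative is quasipositive but not strongly quasipositive.

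\textbf{Main obstacle.} The crux is step (3)–(4): showing that the constraint $\delta_3(\K)=1$ really pins the normal form down to a single leftover negative letter (rather than, say, several negative letters that happen to combine), and then verifying the slide/flype move that converts $(\text{positive})\cdot\sigma_j^{-1}$ into a product of positive conjugates of $\sigma_1$ actually lands inside $B_3$ without increasing strand number. I expect the sporadic 3-braid conjugacy families in the Birman--Menasco classification (those not of the generic syllable form) to require a short separate check, since the genericsyllable-count argument does not apply to them verbatim; these should be few enough to dispatch by hand.
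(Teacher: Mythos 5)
Your disposal of the ``not strongly quasipositive'' half is correct and matches the paper: strong quasipositivity would force $\delta_3(\K)=0$. The gap is in your steps (3)--(4), and it is not merely the acknowledged ``main obstacle'' being hard --- the target normal form you aim for is actually impossible. If $\K$ had a $3$-braid representative of the form $(\text{positive word in }\sigma_1,\sigma_2)\cdot\sigma_j^{-1}$, then $\K$ would be an almost positive braid link, hence strongly quasipositive by Theorem~\ref{1negative}, hence $\delta_3(\K)=0$, contradicting the hypothesis $\delta_3(\K)=1$. So no amount of flyping within the Murasugi/Birman--Menasco standard-generator normal form can pin the representative down to a single negative \emph{standard} generator; the single leftover negative letter must be a negative \emph{band}, with the positive part positive only in the band generators (i.e.\ the representative is almost strongly quasipositive, not an almost positive braid). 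Relatedly, your step (4) as stated (``one conjugate of $\sigma_j$ absorbs the lone negative letter together with an adjacent positive letter'') does not produce a quasipositive word in general --- $\sigma_j^{-1}\sigma_k$ is not a conjugate of a generator --- and there is a genuine degenerate case to exclude.

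The paper's route sidesteps your obstacle entirely. It quotes Corollary~1.10 of the authors' earlier paper to obtain a $3$-braid representative $K$, written in the band generators $a_1=\sigma_1$, $a_2=\sigma_2$, $a_3=\sigma_{1,3}$, whose Bennequin surface realizes $\chi(\K)$ and carries exactly $\delta_3(\K)=1$ negative band; then Xu's normal form theorem puts $K$ into the shape $a_1^{-1}(a_2)^{n_1}(a_3)^{n_2}(a_1)^{n_3}\cdots$ with $n_1,\dots,n_k>0$. When $k\geq 3$ the identity
\[ a_1^{-1}a_2^{n_1}a_3^{n_2}a_1^{n_3}\cdots = (a_1^{-1}a_2a_1)^{n_1}(a_1^{-1}a_3a_1)^{n_2}a_1^{n_3-1}a_2^{n_4}\cdots \]
exhibits quasipositivity (this uses $n_3\geq 1$ to cancel the trailing $a_1^{-1}$), and when $k=2$ the braid negatively destabilizes to a $2$-braid, contradicting $\beta(\K)=3$. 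If you want to salvage your plan, you would need to replace the standard-generator normal form by the band-generator (Xu) normal form and import the minimal-genus statement for $3$-braid Bennequin surfaces; as written, the plan cannot be completed.
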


\begin{proof}
Corollary 1.10 in \cite{IK-bennequin-bound} implies that 
there exists a 3-braid representative $K$ of $\K$ in the band generators $\{\sigma_{i,j} \ | \ 1\leq i < j \leq 3\}$ 
such that the associated Bennequin surface $\Sigma_K$ realizes the maximal Euler characteristic $\chi(\K)$ and contains $\delta(\K)=1$ negative band. 
Using \cite[Theorem 6]{X} of Xu, we may assume that $K$ is represented by a braid word having the form $NP$; namely,  
$$K=a_1^{-1} (a_2)^{n_1} (a_3)^{n_2} (a_1)^{n_3} (a_2)^{n_4} (a_3)^{n_5} (a_1)^{n_6}\cdots$$
where $a_1:=\sigma_1, a_2:=\sigma_2, a_3:=\sigma_{1,3}$, and $n_1, \cdots, n_k>0$ and $n_{k+1}=n_{k+2}= \cdots = 0$ for some $k\geq 2$.  
If $k\geq 3$ then $K$ admits a quasipositive word
$$K=(a_1^{-1} a_2 a_1)^{n_1} (a_1^{-1} a_3 a_1)^{n_2} a_1^{n_3-1} a_2^{n_4} (a_3)^{n_5} (a_1)^{n_6}\cdots.$$
If $k=2$ then, up to conjugation, the word can be negatively destabilized into a 2-braid, which is a contradiction. 
\end{proof}

The next theorem shows that the answer to Question \ref{question:almostsqp} is ``Yes'' when $\mathcal{K}$ admits an almost strongly quasipositive 4-braid representative.

\begin{theorem}
\label{theorem:4SQPtoQP}
Suppose that $\K$ has an almost strongly quasipositive 4-braid representative $K$. 
Then one of the following holds. 
\begin{enumerate}
\item[(i)] $K$ is quasipositive (possibly strongly quasipositive, cf. Theorem~\ref{1negative}). 
\item[(ii)] $K$ is a negative stabilization of a strongly quasipositive  3-braid.
\item[(iii)] $K$ admits a negative flype 
and then can be negatively destabilized to a strongly quasipositive braid of braid index at most 3.
\end{enumerate}
\end{theorem}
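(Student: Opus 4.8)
The plan is to reduce an arbitrary almost strongly quasipositive $4$-braid to one of the three normal forms by carefully tracking the single negative band through band-slide moves, conjugations, and flypes, exactly in the spirit of the proof of Theorem~\ref{1negative} and Lemma~\ref{lemma:SQP1}. First I would write $K = \beta_0 \, \sigma_{p,q}^{-1} \, \beta_1$ where $\beta_0, \beta_1$ are positive words in the band generators of $B_4$ and $\sigma_{p,q}^{-1}$ is the unique negative band. If the strand pair $\{p,q\}$ is a ``short'' band $\sigma_{j,j+1}$ adjacent to positive bands of a compatible form, one hopes to massage $K$ into the shape~(\ref{eqn:word2}) of Lemma~\ref{lemma:SQP1} (after a cyclic permutation, and possibly after conjugating by the half-twist $\Delta$ to swap the roles of the two ends as in the proof of Theorem~\ref{1negative}); in that case we land in conclusion~(i). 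The low rank $n=4$ is what makes this feasible by hand: there are only the band generators $\sigma_{1,2},\sigma_{2,3},\sigma_{3,4},\sigma_{1,3},\sigma_{2,4},\sigma_{1,4}$, so one can enumerate the possible ``types'' of the negative band and, for each, the obstructions to applying Lemma~\ref{lemma:SQP1} directly.

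The key steps, in order, are: (1) normalize $K$ by cyclic permutation so that the negative band is the first letter, and by conjugation (including conjugation by $\Delta$) reduce the number of essentially distinct cases for $\sigma_{p,q}^{-1}$; (2) in each case, attempt to slide the positive bands of $\beta_1$ leftward past the initial negative band — this is the move used repeatedly in (\ref{eqn:lemma-start})--(\ref{eqn:lemma-conclusion}) — so as to either (a) cancel the negative band against a matching positive band, giving strong quasipositivity and hence~(i), or (b) expose that some strand is not interacting with the negative band, allowing a negative destabilization; (3) when no strand is free and no cancellation is available, detect whether the braid has the syllable structure of a negative stabilization of a $3$-braid (conclusion~(ii)) or of the flype pattern $\sigma_1^{-1} v \sigma_1^{m} w$ with $v,w$ words in $\sigma_2^{\pm1},\sigma_3^{\pm1}$ (conclusion~(iii)); (4) for case~(iii), apply the negative flype to turn $\sigma_1^{-1}$ into $\sigma_1^{m}$ and $\sigma_1^m$ into $\sigma_1^{-1}$, check that after this the $\sigma_1^{-1}$ can be absorbed by a destabilization (using that the only negative letter is this one), and conclude braid index drops to at most $3$ with a strongly quasipositive result. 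Throughout, Lemma~\ref{lemma:SQP1} is the workhorse that certifies~(i) once the word is coaxed into the required form, and the $3$-braid cases (ii)--(iii) are exactly the residual configurations where Lemma~\ref{lemma:SQP1}'s hypotheses genuinely fail because the negative band ``wraps around'' all four strands (e.g.\ $\sigma_{1,4}^{-1}$) or separates the braid.

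I expect the main obstacle to be the bookkeeping in step~(2)--(3): one must show that after exhausting all slide and conjugation moves, the \emph{only} words that resist both cancellation and reduction to Lemma~\ref{lemma:SQP1}'s form are precisely those of type~(ii) or~(iii). Concretely, the delicate point is the negative band $\sigma_{1,4}^{-1}$ (or its $\Delta$-image), which touches every strand and cannot be slid off; here one has to argue that any positive completion either already lets the band cancel, or forces the complementary positive word to live in a rank-$3$ sub-braid (yielding a destabilization to a $3$-braid, which is strongly quasipositive by Xu's normal form / the $3$-braid results cited before Proposition~\ref{prop:3braid-evidence}), or forces the flype pattern. A secondary subtlety is making sure the flype in~(iii) is applied to a braid genuinely of the form $\sigma_1^{\varepsilon} v \sigma_1^m w$ with $v,w$ supported on $\{\sigma_2^{\pm1},\sigma_3^{\pm1}\}$ — this requires first conjugating so that all occurrences of $\sigma_1^{\pm1}$ are visible and the negative letter is an exponent-$(-1)$ syllable — and then verifying that the flyped braid negatively destabilizes, which uses that the flype preserves the count of negative bands (one) and that this single negative band becomes the destabilizing letter. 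Once the case analysis is organized by the type of the negative band, each individual case should be short, but assembling them into an exhaustive trichotomy is the crux.
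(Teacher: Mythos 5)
Your outline captures the right general shape --- normalize the position of the unique negative band by cyclic permutation and conjugation, then run a case analysis ending in a flype for the residual family --- but it has two genuine gaps. First, you propose Lemma~\ref{lemma:SQP1} as ``the workhorse that certifies (i).'' That lemma certifies \emph{strong} quasipositivity, and the only other mechanism you offer for reaching (i) is cancelling the negative band against a matching positive band, which again yields strong quasipositivity. But the substance of conclusion (i) --- and the reason this theorem is evidence for Question~\ref{question:almostsqp} --- is that most of these braids come out quasipositive \emph{without} being strongly quasipositive. The paper's proof produces explicit quasipositivity certificates by rewriting band generators as conjugates, e.g.\ replacing $\sigma_{1,3}$ by $\sigma_{2,3}\sigma_{1,2}\sigma_{2,3}^{-1}$ so that $K\sim(\sigma_{1,2}^{-1}\beta\sigma_{2,3}\sigma_{1,2})(\sigma_{2,3}^{-1}\beta'\sigma_{2,3})\beta''$, visibly a product of conjugates of positive bands. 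Your plan has no analogue of this step, so it cannot reach conclusion (i) in the typical (non-strongly-quasipositive) case.

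Second, the exhaustive structure analysis is the actual content of the proof, and you concede it is missing (``assembling them into an exhaustive trichotomy is the crux''). The paper handles it by putting $K$ into a standard form $\sigma_{1,2}^{-1}R_1S_1R_2\cdots S_{n-1}R_n$ with $R_i$ supported on $\{\sigma_{2,3},\sigma_{3,4},\sigma_{2,4}\}$ and $S_i$ on $\{\sigma_{1,3},\sigma_{1,4}\}$, defining a lexicographic complexity whose entries are $-l(R_i)$ (a greedy, Garside-style choice), and arguing from a minimal-complexity representative through a chain of claims that either yield a quasipositivity certificate or force the word into the explicit family $\sigma_{1,2}^{-1}(\sigma_{2,4}^v\sigma_{2,3}^w)\sigma_{1,3}^x(\sigma_{3,4}^y\sigma_{2,4}^z)$, which is then shown by direct computation to admit a negative flype and destabilize. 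Nothing in your proposal substitutes for this induction. A smaller but real miscalibration: in $B_4$, conjugation by the dual Garside element cycles $\sigma_{1,2}\to\sigma_{2,3}\to\sigma_{3,4}\to\sigma_{1,4}\to\sigma_{1,2}$ and swaps $\sigma_{1,3}\leftrightarrow\sigma_{2,4}$, so the case $\sigma_{1,4}^{-1}$ that you single out as the delicate one is equivalent to the $\sigma_{1,2}^{-1}$ case; the genuinely separate case is $\sigma_{1,3}^{-1}$, and the flype family of conclusion (iii) in fact arises inside the $\sigma_{1,2}^{-1}$ case rather than from a band that ``wraps around all four strands.''
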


Theorem~\ref{theorem:4SQPtoQP} is proved in Section~\ref{sec:proof}.

Case (iii) is the most interesting. A negative flype preserves the topological link type and the self-linking number but possibly changes the transverse link type \cite{bm-2}. On the other hand, a positive flype preserves the transverse link type.

This would suggest that the answer to a \emph{transverse link version} of Question \ref{question:almostsqp} ``If a transverse link $\T$ is represented by an almost strongly quasipositive braid then is $\T$ quasipositive?'' is likely to be ``No''.

\section{Alternating or homogeneous quasipositive knots}

From the table of knots of crossing number up to 12 crossings, the following questions naturally appear (cf. \cite{Baader} \cite[Question 7.5]{BBG}):

\begin{question}\label{question:alternating}
{$ $}
\begin{enumerate}
\item[(i)] If $\K$ is alternating (more generally, homogeneous), then $\delta_3(\K) \neq 1$ ?
\item[(ii)] If $\K$ is quasipositive and alternating (more generally, homogeneous) then is $\K$  positive (hence, strongly quasipositive)? 
\end{enumerate} 
\end{question}

We remark that Question \ref{question:alternating} (i) follows from Question \ref{question:alternating} (ii) and Question~\ref{question:d=1}:  
When $\delta_3(\K)=1$, then assuming the truth of Question~\ref{question:d=1}, $\K$ is quasipositive. 
If $\K$ were alternating then Question \ref{question:alternating} (ii) implies $\K$ is strongly quasipositive, which means $\delta_3(\K)=0$, a contradiction. This proves the contrapositive of Question \ref{question:alternating} (i).

Recall the definition of strongly homogeneous links (Definition~\ref{def:strongly-homogeneous}).  
For alternating links, the following characterization of strongly homogeneous links is found by Diao, Hetyei and Liu \cite[Theorem 1.1]{dhl}. 

\begin{proposition}\label{prop:Diao}
Let $\K$ be an alternating link and $D$ be a reduced alternating diagram of $\K$.
The number of Seifert circles in $D$ is equal to the braid index $\beta(\K)$ of $\K$ if and only if for each pair of Seifert circles $s$ and $s'$ of $D$, the number of bands connecting $s$ and $s'$ is not one.  
\end{proposition}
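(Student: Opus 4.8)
The claim is a characterization, due to Diao--Hetyei--Liu, of when a reduced alternating diagram $D$ of an alternating link $\K$ already realizes the braid index, stated in terms of the multiplicity of edges in the Seifert graph $G_D$. The natural approach combines two ingredients: the MFW (Morton--Franks--Williams) inequality, which for alternating links is known to be sharp (by work of Murasugi and others, the braid index of an alternating link equals the number of Seifert circles in a reduced alternating diagram minus the number of ``reduction moves'' available), and the Yamada--Vogel algorithm, which converts a diagram with $s(D)$ Seifert circles into a braid on $s(D)$ strands. Concretely, I would first recall that for a connected reduced alternating diagram, $\beta(\K) = s(D) - r(D)$, where $r(D)$ counts a maximal collection of disjoint ``reducing'' configurations; the content of the proposition is that each such reduction corresponds exactly to a pair of Seifert circles joined by a single band (equivalently, a single edge in $G_D$, i.e. an isthmus-type configuration of the canonical Seifert surface).

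The key steps, in order, would be: (1) Set up the Seifert graph $G_D$ and observe that a pair of Seifert circles $s, s'$ joined by exactly one band gives, in the canonical Seifert surface $F_D$, a single twisted band whose core arc can be used to perform a Seifert-circle-reducing isotopy (a ``Murasugi--Przytycki reduction'' / Yamada reduction), decreasing $s(D)$ by one without changing $\K$. Hence if such a pair exists, $s(D) > \beta(\K)$, proving one direction by contraposition. (2) For the converse, suppose no pair of Seifert circles is joined by exactly one band; I would show $s(D) = \beta(\K)$ by proving $s(D) \le \beta(\K)$, since $\beta(\K) \le s(D)$ always holds (Yamada--Vogel). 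For this I would invoke the sharpness of the MFW inequality for alternating links together with Murasugi--Przytycki's index $\mathrm{ind}(D)$: they showed $\beta(\K) = s(D) - \mathrm{ind}(D)$ for an alternating diagram, and $\mathrm{ind}(D)$ is computed from the ``reduction graph'' whose reducible configurations are precisely detected by single-band pairs of Seifert circles when $D$ is reduced and alternating. So $\mathrm{ind}(D) = 0$ under our hypothesis, giving $s(D) = \beta(\K)$.

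The main obstacle is step (2): translating the combinatorial hypothesis ``no two Seifert circles share exactly one band'' into the vanishing of Murasugi--Przytycki's reduction index. One must verify that in a \emph{reduced alternating} diagram every potentially reducible configuration (a sequence of Seifert circles and bands that can be ``pushed across'') ultimately forces the existence of two Seifert circles joined by a single band — this uses reducedness to rule out nugatory crossings (which would give loops in $G_D$) and uses the alternating condition, via the block structure of $G_D$, to control how bands distribute between circles. I would organize this around the block decomposition of $G_D$ introduced just above: within each block the alternating condition forces all edges to have the same sign (cf. Definition~\ref{def:homogeneous}), and a reduction is possible exactly when some block is a single edge or, more generally, when the block graph has a vertex of the appropriate type; chasing this down identifies single-edge pairs as the only source of reductions.

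Since the statement is explicitly attributed to \cite{dhl}, the honest write-up is short: cite their Theorem 1.1 for the equivalence, and record here the one direction we actually use — that a single-band pair of Seifert circles permits a Seifert-circle reduction, hence $D$ with no such pair is a candidate for a braid-index-realizing diagram — referring to \cite{dhl} (and the underlying Murasugi--Przytycki and MFW machinery) for the full argument.
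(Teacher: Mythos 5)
The paper gives no proof of this proposition at all --- it is stated as a direct quotation of Diao--Hetyei--Liu \cite[Theorem 1.1]{dhl} --- so your concluding recommendation to simply cite that theorem is exactly what the paper does, and your one recorded direction (a single-band pair of Seifert circles permits a Seifert-circle reduction, hence $\beta(\K)<s(D)$) is the standard easy half. (Your intermediate sketch's blanket assertion that the MFW inequality is ``sharp for alternating links'' is not literally true in general, but since you ultimately defer to \cite{dhl} for the full argument, this does not affect the write-up.)
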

Consequently, many alternating links are strongly homogeneous. 
Proposition \ref{prop:shom-delta} below confirms Question \ref{question:alternating}(i) under strongly homogeneous assumption.

\begin{proposition}
\label{prop:shom-delta}
If $\K$ is a strongly homogeneous link then $\delta_3(\K)\neq 1$. 
\end{proposition}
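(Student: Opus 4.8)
The plan is to deduce Proposition~\ref{prop:shom-delta} from Proposition~\ref{prop:TFAE} (the strongly homogeneous case), which is available to us, together with the elementary inequality relating $\delta_3$ to the number of negative bands in a Bennequin surface coming from a homogeneous diagram. Suppose $\K$ is strongly homogeneous, so it admits a homogeneous diagram $D$ with $s(D)=\beta(\K)$. Since $D$ is homogeneous, by the useful property \eqref{eq:useful property} we have $\chi(F_D)=\chi(\K)$, and since $s(D)=\beta(\K)$, one can read off a braid representative $K$ of $\K$ whose strand number is $\beta(\K)$ and whose Bennequin surface $\Sigma_K$ coincides with $F_D$ (a homogeneous diagram whose Seifert circles are all coherently oriented gives such a braid directly, after isotopy). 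Hence $\chi(\Sigma_K)=\chi(\K)$ and $sl(K)=SL(\K)$, so $\delta_3(\K)=\delta_3(K)$ equals the number of negative bands of $\Sigma_K$, i.e. the number of negatively-signed edges of $G_D$.

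Now argue by contradiction: assume $\delta_3(\K)=1$. Then $\Sigma_K=F_D$ has exactly one negative band, i.e. $G_D$ has exactly one negative edge $e$. By homogeneity, within each block of $G_D$ all edges have the same sign; thus the block containing $e$ consists of the single edge $e$ (plus its two endpoints), which forces $e$ to be an edge whose two endpoints are joined by no other edge — equivalently, a pair of Seifert circles connected by exactly one band, and moreover that band carries the sign opposite to a positive diagram. I would then invoke the standard fact that a single negative crossing joining two Seifert circles by a lone band can be removed (a Reidemeister-I–type / destabilization move on the canonical surface: the lone negative band can be slid off, reducing both the crossing number and leaving a diagram with the same, or one fewer, Seifert circles and no negative crossings at all). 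This produces a positive diagram $D'$ of $\K$, so $\K$ is a positive link, hence strongly quasipositive, hence $\delta_3(\K)=0$ by Proposition~\ref{prop:TFAE}, contradicting $\delta_3(\K)=1$.

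The main obstacle I anticipate is making precise the step that a single negative band spanning its own block can be removed without creating negative crossings elsewhere, and that the resulting diagram is still a diagram of the same link with $s(D')\le \beta(\K)$; this requires a careful look at how a cut-vertex decomposition of $G_D$ interacts with the diagram $D$ and with Seifert's algorithm, and one must ensure that the block $\{e\}$ really does correspond to a lone band (no parallel band of the opposite sign, which homogeneity already rules out, and no parallel band of the same sign, which would enlarge the block). An alternative route that sidesteps the diagrammatic surgery entirely is purely numerical: since $\K$ is strongly homogeneous we may just as well take a minimal-strand braid word $K$ in the standard generators with $\Sigma_K=F_D$; then $\delta_3(\K)=1$ forces $K$ to have exactly one negative letter, so $\K$ is an almost positive braid link, hence strongly quasipositive by Theorem~\ref{1negative}, hence $\delta_3(\K)=0$, again a contradiction. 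I would likely present this second argument as the clean proof and relegate the block/Seifert-graph discussion to a remark, since Theorem~\ref{1negative} does all the heavy lifting once one observes that strong homogeneity converts the Euler-characteristic bound into a bound on the number of negative band generators in a braid word.
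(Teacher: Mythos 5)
Your preferred (second) argument has a genuine gap: it conflates two notions that the paper keeps carefully apart. The braid representative $K$ you extract from a homogeneous diagram $D$ with $s(D)=\beta(\K)$ comes from Corollary~\ref{theorem:canonicalBennequin} (Yamada's algorithm), and its Bennequin surface has $w_-(D)$ negative \emph{band generators} $\sigma_{i,j}^{-1}$, not negative \emph{standard generators} $\sigma_i^{-1}$; there is no reason the diagram is already in braid position, so you cannot take $K$ ``in the standard generators with $\Sigma_K=F_D$.'' A word with exactly one negative band generator is an \emph{almost strongly quasipositive} braid, not an \emph{almost positive braid}, and Theorem~\ref{1negative} applies only to the latter (rewriting $\sigma_{i,j}^{-1}$ in standard generators introduces many negative letters). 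Whether almost strongly quasipositive links are even quasipositive is Question~\ref{question:almostsqp}, which is open, and quasipositivity would in any case only give $\delta_4=0$, not $\delta_3=0$, so no contradiction is available along this route. Your own phrase ``a bound on the number of negative band generators'' shows the slippage: Theorem~\ref{1negative} does not do that heavy lifting.

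Your first argument is closer to a correct proof and is genuinely different from the paper's, but its key step is both left unproved and misstated. From homogeneity you correctly deduce that the unique negative edge $e$ forms its own block, i.e.\ is an isthmus of $G_D$; but you then justify removability by saying the two Seifert circles are ``connected by exactly one band,'' which is strictly weaker and not sufficient (three Seifert circles pairwise joined by single negative bands satisfy it, and none of those crossings is removable). What makes the crossing removable is precisely the isthmus/nugatory property; once that is established, untwisting either yields a positive diagram of $\K$ (so $\delta_3(\K)=0$ by Proposition~\ref{prop:TFAE}), or, even more directly, yields a diagram with $s(D)-1=\beta(\K)-1$ Seifert circles, contradicting Yamada's theorem. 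The paper sidesteps all of this: it computes $\delta_3(\K)=w_-(D)$ exactly as you do (via \eqref{eq:useful property} and Corollary~\ref{cor:diagram-sl}) and then invokes the Diao--Hetyei--Liu characterization (Proposition~\ref{prop:Diao}) on a reduced diagram to conclude that the negative bands joining $s$ to $s'$ number at least two, whence $w_-(D)\geq 2\neq 1$. If you want to keep your route, you must prove the isthmus-implies-nugatory step rather than invoke it for the weaker ``single band between a pair'' configuration.
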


\begin{proof}
Suppose that $\K$ is strongly homogeneous and $\delta_3(\K)=1$. 

Let $D$ be a homogeneous diagram of $\K$ such that $s(D)=\beta(\K)$. Since $\delta_3(\K)=1$, $\K$ is not a positive link so $D$ cannot be a positive diagram. Hence there is a Seifert circle $s$ of $D$ connected to another Seifert circle $s'$ by negative bands. 
Without loss of generality we may assume that $D$ is reduced.
This means by Proposition~\ref{prop:Diao} the number of negative bands connecting $s$ and $s'$ is greater than one. 
Let $w_{\pm}(D)$ be the number of positive and negative crossings in $D$. 
In particular, $w_-(D)>1$. 

Since $D$ is homogeneous the equation (\ref{eq:useful property}) yields $\chi(\K)=\chi(F_D)=s(D)-w_{+}(D)-w_{-}(D)$. 
By Corollary \ref{cor:diagram-sl}, $SL(\K)= w_{+}(D)-w_{-}(D)-s(D)$.
Then we have $\delta_{3}(\K)= \frac{1}{2}(-SL(\K)-\chi(\K))= w_{-}(D)>1$, which is a contradiction.
\end{proof}

Next we give evidence for Question \ref{question:alternating} (ii). 

\begin{theorem}
\label{theorem:QPalt}
Suppose that $\K$ is strongly homogeneous and quasipositive. 
If every $\beta(\K)$-braid representative of $\K$ is right-veering, then $\K$ is positive.
\end{theorem}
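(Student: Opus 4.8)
The plan is to combine the quasipositivity hypothesis with the right-veering hypothesis to force the defect $\delta_3(\K)$ to vanish, and then invoke Proposition~\ref{prop:TFAE}. Since $\K$ is strongly homogeneous, Proposition~\ref{prop:shom-delta} already rules out $\delta_3(\K)=1$, so it suffices to rule out $\delta_3(\K)\geq 2$; equivalently, to show $SL(\K)=-\chi(\K)$, after which Proposition~\ref{prop:TFAE} (applied to the homogeneous, in fact strongly homogeneous, link $\K$) immediately yields that $\K$ is positive and strongly quasipositive. So the whole game is to produce a braid representative $K$ of $\K$ realizing the braid index $\beta(\K)$ whose self-linking number equals $-\chi(\K)$, or at least to show $SL(\K)=-\chi(\K)$ directly.

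First I would take a reduced homogeneous diagram $D$ of $\K$ with $s(D)=\beta(\K)$, which exists by the strongly homogeneous assumption, and turn it into a braid $K$ on $n=\beta(\K)$ strands (using that a diagram with $s(D)$ Seifert circles and no ``nugatory'' obstructions can be braided without increasing the number of Seifert circles — Yamada/Vogel). Since $D$ is homogeneous, the canonical Seifert surface satisfies $\chi(F_D)=\chi(\K)$ by~\eqref{eq:useful property}, and the self-linking number computed from the braid $K$ is $sl(K)=w_+(D)-w_-(D)-n$ (Corollary~\ref{cor:diagram-sl}), while $\chi(\K)=\chi(F_D)=n-w_+(D)-w_-(D)$; hence $\delta_3(K)=\tfrac12(-\chi(\K)-sl(K))=w_-(D)$. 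Thus $\delta_3(\K)\leq w_-(D)$, and equality $SL(\K)=-\chi(\K)$ is exactly the statement that $D$ can be chosen with no negative crossings. The point of the quasipositive + right-veering hypothesis is to supply precisely this.

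Here is how I expect right-veering to enter. Because $\K$ is quasipositive, the contact structure $\xi_{std}$ restricted to the complement (or: the open book supported by any braid representative) has no ``negative'' obstruction; more concretely, a quasipositive braid is right-veering, and the right-veering hypothesis for \emph{every} $\beta(\K)$-braid representative is what lets us upgrade this. The strategy is: take the braid $K$ coming from $D$ above; if $w_-(D)=0$ we are done, so suppose $w_-(D)\geq 1$. Then $\Sigma_K$ contains negative bands, i.e.\ $K$ (written in band generators after the homogeneity bookkeeping) has a negative letter. Using right-veering of this $n$-braid together with the structure of homogeneous braids (each block has a single sign, and a single-sign block on the minimal number of strands is, up to braid isotopy within that block, a positive or negative twist region), I would argue that a block carrying negative crossings would produce a left-veering arc, contradicting right-veering — unless that block can be destabilized, which would contradict $s(D)=\beta(\K)$. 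Iterating block by block removes all negative crossings, yielding $\delta_3(\K)=0$, hence by Proposition~\ref{prop:TFAE} the link $\K$ is positive.

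The main obstacle, and the step I would spend the most care on, is the passage from ``$K$ is a homogeneous braid with a negative block and is right-veering'' to ``that block can be removed (destabilized) or does not actually occur.'' Right-veering is a condition about the monodromy of the open book acting on the fractional Dehn twist coefficients / on boundary-parallel arcs, and relating it to the \emph{local} sign of a single block of the Seifert graph requires knowing that a negative block contributes a genuinely left-veering arc to the global monodromy — this is false in general but should be salvageable here because strong homogeneity forces $s(D)=\beta(\K)$, so there is no room to ``hide'' a negative block inside extra strands. I would try to make this precise either via the Honda–Kazez–Matić characterization of right-veering (reducing to an arc lying in the negative block) or, more in the spirit of the paper, via an explicit band-slide argument analogous to Lemma~\ref{lemma:SQP1} and Theorem~\ref{separating}: show that a right-veering homogeneous braid with a negative band on the minimal number of strands admits a flype/destabilization removing that band. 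Once the local claim is established, the global induction on the number of blocks (or on $w_-(D)$) is routine, and the conclusion follows from Proposition~\ref{prop:TFAE}.
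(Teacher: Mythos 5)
Your overall strategy is the same as the paper's, just phrased in the direct rather than the contrapositive direction: take a homogeneous diagram $D$ with $s(D)=\beta(\K)$, note that $\delta_3$ of the associated braid equals $w_-(D)$, and argue that a negative block would force some $\beta(\K)$-braid representative to fail to be right-veering. (Incidentally, the paper's proof never uses quasipositivity; your appeal to it is decorative.) The reduction to Proposition~\ref{prop:TFAE} once $w_-(D)=0$ is correct, and the bookkeeping $\delta_3(K)=w_-(D)$ via equation~\eqref{eq:useful property} and Corollary~\ref{cor:diagram-sl} is exactly what the paper does in the proof of Proposition~\ref{prop:shom-delta}.

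However, the step you yourself flag as ``the main obstacle'' is a genuine gap, and neither of your proposed routes closes it as stated. The needed statement is Lemma~\ref{lemma:Hom-rv}: a non-positive homogeneous diagram $D$ yields a closed $s(D)$-braid representative that is \emph{not} right-veering. Your sketch — ``a negative block produces a left-veering arc, unless it can be destabilized, contradicting $s(D)=\beta(\K)$'' — does not identify the mechanism, and you concede the local-to-global implication ``is false in general.'' The paper's argument is concrete and does not need any destabilization dichotomy: homogeneity gives a Seifert circle $s$ all of whose band arcs on one side are negative; the disk-bunching operations of Section~\ref{section:diagram} convert $F_D$ into a Bennequin surface on $s(D)$ strands while leaving the disk $D_s$ untouched, so the resulting strand $i$ meets no bands $\sigma_{k,l}^{\pm1}$ with $k<i<l$ and only negative bands to strands $j<i$. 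In standard generators the braid word then contains $\sigma_{i-1}^{-1}$ but no $\sigma_{i-1}$, which is a direct certificate of non-right-veering. Without this (or an equivalent) construction — in particular, without the control that the quasi-canonical surface machinery gives over which strand the circle $s$ becomes — your argument does not go through.
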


For the definition of right-veering braids see \cite[Section 7]{IK-Qveer}.

\begin{lemma}
\label{lemma:Hom-rv}
Suppose that $\K$ is a non-positive link. 
If there exists a homogeneous link diagram $D$ of $\K$ then $\K$ admits a closed $s(D)$-braid representative which is not right-veering. 
\end{lemma}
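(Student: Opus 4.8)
The goal is to start from a homogeneous diagram $D$ of a non-positive link $\K$ and produce a closed $s(D)$-braid representative of $\K$ that fails to be right-veering. The natural strategy is to convert $D$ into a braid using the Yamada--Vogel algorithm, which turns Seifert circles into braid axes/strands without creating new Seifert circles, so that the resulting closed braid $K$ has braid index $s(D)$ and its canonical Bennequin surface is (an isotopic copy of) the canonical Seifert surface $F_D$. The key observation is that because $\K$ is \emph{not} positive, the homogeneous diagram $D$ must contain a block all of whose edges are negative, and moreover at least one Seifert circle must meet a negative crossing; after braiding, this translates into the braid word $K$ containing at least one negative band (equivalently, a negative letter among the generators that survive the Vogel moves).

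\textbf{Key steps, in order.} First, I would recall the Yamada--Vogel procedure and note that it preserves the number of Seifert circles, so $D$ yields an $s(D)$-braid $K$ whose closure is $\K$ and whose associated Bennequin surface $\Sigma_K$ is isotopic to $F_D$; in particular homogeneity of $D$ is inherited in the sense that the bands of $\Sigma_K$ coming from a single block all have the same sign. Second, I would argue that non-positivity of $\K$ forces $D$ to be non-positive (a positive diagram of $\K$ would make $\K$ positive), hence by the block structure there is a block, and thus a pair of Seifert circles, joined only by negative crossings; consequently $K$ contains at least one negatively-signed band generator $\sigma_{i,j}^{-1}$ (or, if one prefers to stay with standard generators, at least one $\sigma_k^{-1}$). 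Third, I would invoke the characterization of right-veering braids from \cite[Section 7]{IK-Qveer}: the presence of such a negative band, situated in a homogeneous braid word where no positive band "cancels" its effect on the relevant arc, exhibits an arc that is sent to the left, so $K$ is not right-veering. The homogeneity is exactly what guarantees the negative band is not swamped by positive ones on the same pair of strands, which is the content one needs to conclude non-right-veeringness rather than merely "not strongly quasipositive."

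\textbf{Main obstacle.} The delicate point is the last step: extracting an explicit left-veering arc from the homogeneous braid word and making rigorous the claim that a negative band in a homogeneous position forces non-right-veering behavior. Right-veeringness is a statement about \emph{every} properly embedded arc in the fiber/page, so it suffices to exhibit one bad arc, but one must be careful that the Vogel moves and any cyclic reductions do not accidentally produce a braid in which the negative crossings have been absorbed. I expect the cleanest route is to work with the block decomposition: restrict attention to a negative block, observe that within it the corresponding sub-braid is (a conjugate/stabilization of) a negative homogeneous braid on its strands, and use the known fact that a non-trivial negative homogeneous braid word — one that genuinely uses a $\sigma^{-1}$ that cannot be removed by destabilization within the block — is not right-veering, e.g. by pointing to the arc dual to that generator. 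I would then note that right-veeringness of the whole braid would restrict to right-veeringness of this sub-picture, giving the contradiction. Assembling this carefully, with the correct bookkeeping of which generators survive the braiding algorithm, is the part that requires the most care; everything else is a routine application of Yamada--Vogel and the block structure of homogeneous diagrams.
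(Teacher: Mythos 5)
Your overall strategy (braid the homogeneous diagram while preserving the Seifert circles, use non-positivity to locate negative crossings, then conclude non-right-veering) matches the paper's, and your first two steps are fine: Corollary \ref{theorem:canonicalBennequin} does give an $s(D)$-braid whose Bennequin surface is isotopic to $F_D$ with signs preserved, and a positive diagram would make $\K$ positive, so $D$ has a negative crossing and hence an all-negative block. The genuine gap is exactly at the step you flag as the ``main obstacle,'' and the resolution you sketch does not work. Right-veeringness is a condition on \emph{every} properly embedded arc in the disk, and it does not restrict to a ``sub-picture'' supported on the strands of one block: the bands of other blocks attached to the same Seifert circles, and bands passing over the relevant strands, can move the candidate arc back to the right. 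Indeed, the mere presence of a negative letter --- or even of an all-negative collection of bands between two strands --- is compatible with right-veering: every strongly quasipositive braid is right-veering, yet written in the standard generators it typically contains many $\sigma_k^{-1}$'s. So ``there is a negative block, hence a left-veering arc'' is not a valid inference, and no cited criterion in the proposal actually certifies non-right-veering.

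What the paper does instead is isolate a concrete syntactic certificate. Using homogeneity it finds a Seifert circle $s$ such that \emph{all} band arcs attached to $s$ on one side are negative (not merely one negative block somewhere), and it runs the disk-bunching operations so that the disk $D_s$ is never moved; since $F_D$ is canonical, no band arc crosses $s$, so after braiding the strand $i$ corresponding to $s$ has no bands $\sigma_{k,l}^{\pm1}$ with $k<i<l$ passing over it, and every band joining strand $i$ to a strand $j<i$ is negative. These two geometric facts together let one rewrite $K$ in the standard generators as a word containing $\sigma_{i-1}^{-1}$ but no $\sigma_{i-1}$, and \emph{that} is the recognized criterion for non-right-veering (the arc separating strands $i-1$ and $i$ is sent to the left). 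To repair your argument you would need to (a) upgrade ``some block is negative'' to ``some Seifert circle has all its bands on one side negative,'' which uses homogeneity in an essential way, and (b) control the braiding algorithm so that no bands end up crossing over the corresponding strand; a black-box appeal to Yamada--Vogel does not provide (b).
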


The lemma is proved in the end of Section \ref{section:diagram}.

\begin{proof}[Proof of Theorem \ref{theorem:QPalt}]
We prove the contrapositive. 
Assume that $\K$ is a non-positive, strongly homogeneous, quasipositive link. 
By the strongly homogeneous condition, there exists a homogeneous diagram $D$ such that $s(D)=\beta(\K)$.
Then by Lemma \ref{lemma:Hom-rv} and $\K$ admits a $\beta(\K)$-braid representative which is not right-veering. 
\end{proof}

We expect that the assumption of Theorem \ref{theorem:QPalt} is always satisfied for every quasipositive link $\K$.
In the next proposition, we give a sufficient condition for the assumption. 

Transverse links $\T$ and $\T'$ are called \emph{flype-equivalent} \cite{LNS} if their braid representatives are related to each other by negative flypes, positive stabilizations, positive destabilizations and braid isotopies (the last three operations are just the transverse isotopy).
Many known transverse links with the same topological type and the self-linking number are flype equivalent.

\begin{proposition}\label{prop:conditionB}
Let $\K$ be a quasipositive link.
If every pair of transverse links $\T$ and $\T'$ of the topological type $\K$ with $sl(\T)=sl(\T')=SL(\K)$ are flype equivalent, then 
every $\beta(\K)$-braid representative of $\K$ is right-veering. 
\end{proposition}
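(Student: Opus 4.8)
The plan is to prove the contrapositive in a weak form: if some $\beta(\K)$-braid representative $K$ of $\K$ is \emph{not} right-veering, then one can build two transverse representatives $\T$ and $\T'$ with $sl(\T)=sl(\T')=SL(\K)$ that are \emph{not} flype-equivalent. First I would recall that $\K$ is quasipositive, so $\delta_4(\K)=0$ by Proposition~\ref{prop:delta4=0}; together with a braid representative realizing $\beta(\K)$ this pins down the writhe $w(\K)$ and hence $SL(\K)=-\beta(\K)+w(\K)$ via the generalized Jones conjecture. The point is that a non-right-veering $\beta(\K)$-braid $K$ must still have writhe $w(\K)$ (since it realizes the braid index), so after discarding the subtlety that $K$ itself might not be on the transverse side realizing $SL$, I would arrange a transverse representative $\T_0$ of maximal self-linking number lying on a quasipositive braid (which exists because $\K$ is quasipositive and such a braid realizes the Bennequin bound), and contrast it with the transverse link $\T_1$ obtained from the non-right-veering braid $K$.

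The key steps, in order: (1) From quasipositivity produce a quasipositive braid representative $\beta_+$ of $\K$; by Proposition~\ref{prop:delta4=0} and the slice Bennequin inequality $sl$ of its transverse closure equals $SL(\K)$, and a quasipositive braid is right-veering (this is the standard fact that positive products of conjugates of $\sigma_i$ are right-veering). After positively (de)stabilizing, I may take $\beta_+$ to be a $\beta(\K)$-braid still right-veering. (2) Take the given non-right-veering $\beta(\K)$-braid $K$; since it realizes the braid index its writhe is $w(\K)$, so its transverse closure $\T_K$ also has $sl(\T_K)=-\beta(\K)+w(\K)=SL(\K)$. (3) Invoke the hypothesis: $\T_K$ and the transverse closure of $\beta_+$ are flype-equivalent, i.e. related by negative flypes, positive (de)stabilizations and braid isotopies. (4) Show this is impossible: the property of being right-veering is preserved under positive stabilization, positive destabilization, braid isotopy, \emph{and} negative flypes. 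The last point is the crucial lemma — a negative flype changes $\sigma_1^{-1} v \sigma_1^m w$ to $\sigma_1^m v \sigma_1^{-1} w$ with $v,w$ words in $\sigma_2^{\pm1},\dots,\sigma_{n-1}^{\pm1}$, and one checks directly (e.g. via the Dehornoy floor / fractional Dehn twist coefficient, or via the characterization of right-veering in \cite[Section 7]{IK-Qveer}) that right-veeringness is flype-invariant. Hence $\beta_+$ right-veering forces $K$ right-veering, contradicting the choice of $K$.

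The main obstacle I expect is Step (4), specifically verifying that a negative flype preserves right-veering. Flypes do \emph{not} in general preserve the transverse link type, so one cannot simply cite transverse invariance of right-veering; instead one needs an argument intrinsic to the braid word. I would approach this by using that right-veering for braids is detected by the Dehornoy floor being $\geq 0$ together with the non-existence of a $\sigma_i^{-1}$ that can be ``cancelled to the front,'' or more robustly by the fractional Dehn twist coefficient $c(\beta)\geq 0$; since a negative flype is a particular instance of the exchange-move/flype family that does not change the conjugacy class of the induced mapping class's periodic-or-reducible-or-pseudo-Anosov data in a way affecting the sign of $c$, one gets $c(\sigma_1^m v\sigma_1^{-1}w)\geq 0 \iff c(\sigma_1^{-1}v\sigma_1^m w)\geq 0$. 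A secondary subtlety is the reduction of $\beta_+$ and $K$ to \emph{both} being genuine braid-index braids so that the flype-equivalence hypothesis applies verbatim; this is handled by the generalized Jones conjecture, which guarantees all braid-index representatives have the same writhe and are connected by exchange moves and (de)stabilizations in a controlled way. Once the flype-invariance lemma is in hand, the proposition follows immediately by the chain of implications above.
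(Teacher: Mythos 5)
Your overall strategy—apply the flype-equivalence hypothesis to a hypothetical non-right-veering $\beta(\K)$-braid representative and to a ``positive'' braid-index representative, then derive a contradiction—is the same as the paper's, but your crucial Step (4) contains a genuine gap. You need a property that is (a) enjoyed by the positive representative, (b) violated by a non-right-veering braid, and (c) invariant under \emph{all} the moves of flype equivalence, including negative flypes. You propose to use right-veeringness itself and assert that it is preserved by negative flypes, to be ``checked directly'' via the Dehornoy floor or the fractional Dehn twist coefficient. That is not a proof, and the claim is doubtful: a flype $\sigma_1^{-1}v\sigma_1^{m}w \mapsto \sigma_1^{m}v\sigma_1^{-1}w$ in general changes the conjugacy class of the braid and even the transverse link type, so there is no a priori reason the sign of the FDTC, or the right-veering property, is unchanged; neither this paper nor \cite{IK-Qveer} supplies such an invariance. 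A secondary unjustified step is in (1): you want to positively (de)stabilize a quasipositive braid down to braid index while retaining control, but positive \emph{de}stabilization is not known to preserve right-veering (indeed, in the open book setting positive stabilization can create right-veering, so destabilization can destroy it). The paper sidesteps this by invoking Hayden's theorem \cite[Theorem 1.2]{Hayden}, which produces a quasipositive braid representative already at the braid index $\beta(\K)$.

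The paper's resolution is to transport a different invariant: the contact structure $\xi_2$ on the double branched cover. This is genuinely invariant under negative flypes \cite[Theorem 5.9]{HKP} and under transverse isotopy (positive stabilization/destabilization and braid isotopy). Quasipositivity of the braid-index representative $K'$ makes $(\Sigma_2(K'),\xi_2(K'))$ Stein fillable \cite[Theorem 1.3]{HKP}, while non-right-veering of $K$ makes $(\Sigma_2(K),\xi_2(K))$ overtwisted \cite[Theorem 5.4]{IK-Branch}; since flype equivalence forces these two contact manifolds to agree, this is a contradiction. Note that only one-way implications are used---the argument never needs right-veering itself to be a flype invariant. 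To repair your outline, replace ``right-veering'' in Step (4) by ``Stein fillability of the double branched cover,'' and in Step (1) cite Hayden rather than attempting a destabilization argument.
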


\begin{proof}

Let $K$ be a $\beta(\K)$-braid representative of $\K$. 
Assume to the contrary that $K$ is not right-veering. 

In \cite[Theorem 1.2]{Hayden} Hayden shows that for a quasipositive link $\K$ there exists a $\beta(\K)$-quasipositive braid representative of $\K$. Let us call it $K'$. 
Both $K$ and $K'$ realize the braid index $\beta(\K)$. 
We view $K$ and $K'$ as transverse links in the standard contact $3$-sphere $(S^3, \xi_{std})$. By \cite{DynnikovPrasolov,LaFountainMenasco} we obtain $sl(K)=sl(K')=SL(\K)$. By the assumption they are related to each other by negative flypes, positive stabilizations, positive destabilizations and braid isotopies.

Let us consider the double branched coverings $(\Sigma_2(K), \xi_2(K))$ (resp. $(\Sigma_2(K'), \xi_2(K'))$) of $(S^3, \xi_{std})$ along $K$ (resp. $K'$). 
Since $K'$ is a quasipositive braid, the contact 3-manifold $(\Sigma_2(K'), \xi_2(K'))$ is Stein fillable \cite[Theorem 1.3]{HKP}. 
Since a negative flype preserves the contact structure on the double branched covering \cite[Theorem 5.9]{HKP}, $(\Sigma_2(K), \xi_2(K))$ is Stein fillable.
On the other hand, since $K$ is not right-veering $(\Sigma_2(K), \xi_2(K))$ is overtwisted \cite[Theorem 5.4]{IK-Branch} 
hence not Stein-fillable. This is a contradiction.
\end{proof}

\section{Canonical surfaces and Bennequin surfaces}
\label{section:diagram}

Alexander's theorem states that every link can be represented by a closed braid. Along with several other methods, Yamada's algorithm \cite{Y} is one way 
to turn a given knot diagram into a braid diagram. 
The advantage of Yamada's algorithm is that it shows that the braid index is equal to the minimum number of Seifert circles. Actually, more is true: the algorithm deforms a canonical Seifert surface into a Bennequin surface as we discuss below (Corollary~\ref{theorem:canonicalBennequin}).

We will restrict our attention to connected link diagrams for simplicity.

Bennequin surfaces, our primary interest, are not canonical Seifert surfaces in general.  For this reason, we introduce a notion of {\em quasi-canonical} Seifert surfaces that includes both Bennequin surfaces and canonical Seifert surfaces, and 
show that every quasi-canonical Seifert surface can be isotoped to a Bennequin surface.

\begin{itemize}
\item[(i)]  
Let $s_1,\dots, s_n \subset \mathbb R^2$ be pairwise disjoint oriented circles. We call them {\em Seifert circles. }
Seifert circles $s$ and $s'$ are called 
\begin{itemize}
\item 
\emph{nested} if $s$ encloses $s'$ in $\mathbb R^2$ or $s'$ encloses $s$ in $\mathbb R^2$.  
\item 
\emph{coherent} (resp. \emph{incoherent}) if $[s]=[s'] \in H_{1}(A)$ (resp. $[s]=-[s'] \in H_{1}(A)$), where $A$ denotes the annulus $A \subset \mathbb R^{2} \cup \{\infty\} = S^2$ bounded by $s$ and $s'$.  
\end{itemize}
\item[(ii)] 
Let $b_1,\dots, b_m \subset \mathbb R^2$ be pairwise disjoint edges with signs $\varepsilon_i\in\{+, -\}$. 
We call them {\em band arcs. }
Each $b_i$ joins distinct Seifert circles and ${\rm Int}(b_i) \cap s_j$ is either empty or transverse.  
If $s$ and $s'$ are joined by edges then we require that $s$ and $s'$ are coherent.
\end{itemize}

Just like the construction of a canonical Seifert surface, given $s_1,\dots, s_n, b_1,\dots, b_m$ we construct an oriented surface $F=F(s_1,\dots, s_n, b_1,\dots, b_m)$ in $\R^{3} = \R^{2} \times \R$: 
Take spanning disks $D_{s_1},\dots, D_{s_n}$ for $s_1,\dots, s_n$.
If $s$ encloses $s'$ then the disk $D_{s'}$ is placed on top of $D_s$. We join the disks by $\varepsilon_i$-twisted bands corresponding to the edges $b_i$.
Whenever an edge intersects a Seifert circle, we regard this as a crossing of the edge over the Seifert circle, and place the corresponding twisted band over the corresponding spanning disk.

\begin{definition}[Quasi-canonical Seifert surface]
The 
Seifert surface $F(s_1,\dots, s_n, b_1,\dots, b_m)$ is called \emph{quasi-canonical} if  (i), (ii) and 
the additional properties (iii) and (iv) are satisfied:
\begin{enumerate}
\item[(iii)] For every pair of band arc $b_i$ and Seifert circle $s_j$, the number of intersections $b_i\cap s_j$ is at most one.
\item[(iv)]
If a band arc $b_i$ connecting $s$ and $s'$ intersects $s''$ then $s, s'$ and $s''$ are pairwise coherent. 
\end{enumerate}
See Figure \ref{fig:quasi_canonical}. 
\end{definition}

\begin{figure}[htbp]
\begin{center}
\includegraphics*[width=110mm]{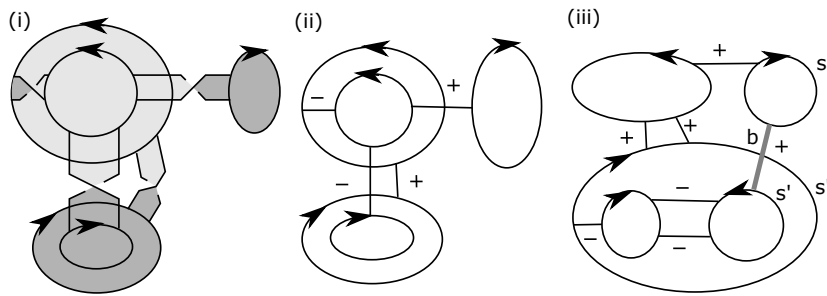}
\caption{(i) A quasi-canonical Seifert surface $F$. 
(ii) Seifert circles and band arcs for $F$.
(iii) A non-quasi-canonical Seifert surface: the band arc $b$ (bold gray) intersects $s''$ which is incoherent to $s$ and $s'$.}
\label{fig:quasi_canonical}
\end{center}
\end{figure}

We observe the following:
\begin{enumerate}
\item
Canonical Seifert surfaces and Bennequin surfaces are quasi-canonical.
\item
A quasi-canonical Seifert surface is canonical if and only if the interior of every band arc is disjoint from all the Seifert circles. 
\item
A quasi-canonical Seifert surface is isotopic to a Bennequin surface if and only if all of the Seifert circles are pairwise coherent (not necessarily pairwise nested).  
\end{enumerate}

One can modify a quasi-canonical Seifert surface into a Bennequin surface by applying isotopy moves called {\em disk-bunching operations}. On the level of link diagram, they are but Yamada's bunching operation of type II \cite{Y}. 
Our disk-bunching operations are refinements of Yamada's which take the disks into account.

\begin{definition}[Disk-bunching operations]
Suppose that $s_1,\dots,s_n, b_1,\dots,b_m$ satisfy the conditions (i) and (ii). 
Let $F:=F(s_1,\dots, s_n, b_1,\dots, b_m)$.
Let $s$ and $s'$ be a pair of incoherent Seifert circles. 
Let $\gamma$ be an oriented arc properly embedded in $\mathbb R^2 \setminus (s_1 \cup\dots\cup s_n \cup b_1\cup \dots\cup b_m)$ that starts from $s$ and ends at $s'$. 
\begin{itemize}
\item Assume that $s$ and $s'$ are not nested.
Replace the disk $D_{s}$ with $D_{s} \cup N(\gamma) \cup D'_{s'}$, where $N(\gamma)$ is a regular neighborhood of $\gamma$ and $D'_{s'}$ is a parallel copy of $D_{s'}$ which lies under $D_{s'}$.
In other words, the operation enlarges $D_{s}$ and slides it under $D_{s'}$. See Figure \ref{fig:bunching}-(i). \\

\item Assume that $s$ and $s'$ are nested and $s'$ encloses $s$. Suppose that $\gamma$ connects a point $p \in s$ to a point $q \in s'$. Let $s^{*} = \gamma \cdot s \cdot \overline{\gamma} \cdot s'$ (read from the right to the left) be a simple closed curve, where $\cdot$ represents concatenation of paths, 
$\overline{\gamma}$ represents $\gamma$ with the reversed orientation, and $s$ and $s'$ are viewed as simple closed curves based at $p$ and $q$, respectively. 
Then we replace the disk $D_s$ with a disk bounded by $s^{*}$ and lying above 
$D_{s'}$.  See Figure \ref{fig:bunching}-(ii). 
We can view this as flipping $D_s$ while fixing the arc $s\setminus \nu(p)$.
\\

\item Assume that $s$ and $s'$ are nested and $s$ encloses $s'$. Suppose that $\gamma$ connects a point $p \in s$ to a point $q \in s'$.  We put $s^{*} = \overline{\gamma} \cdot s' \cdot \gamma \cdot s$
and we replace the disk $D_{s}$ with a disk bounded by $s^{*}$. See Figure \ref{fig:bunching}-(iii). This is a finger move.
\end{itemize}
\end{definition}

\begin{figure}[htbp]
\begin{center}
\includegraphics*[width=95mm, bb=148 480 463 713]{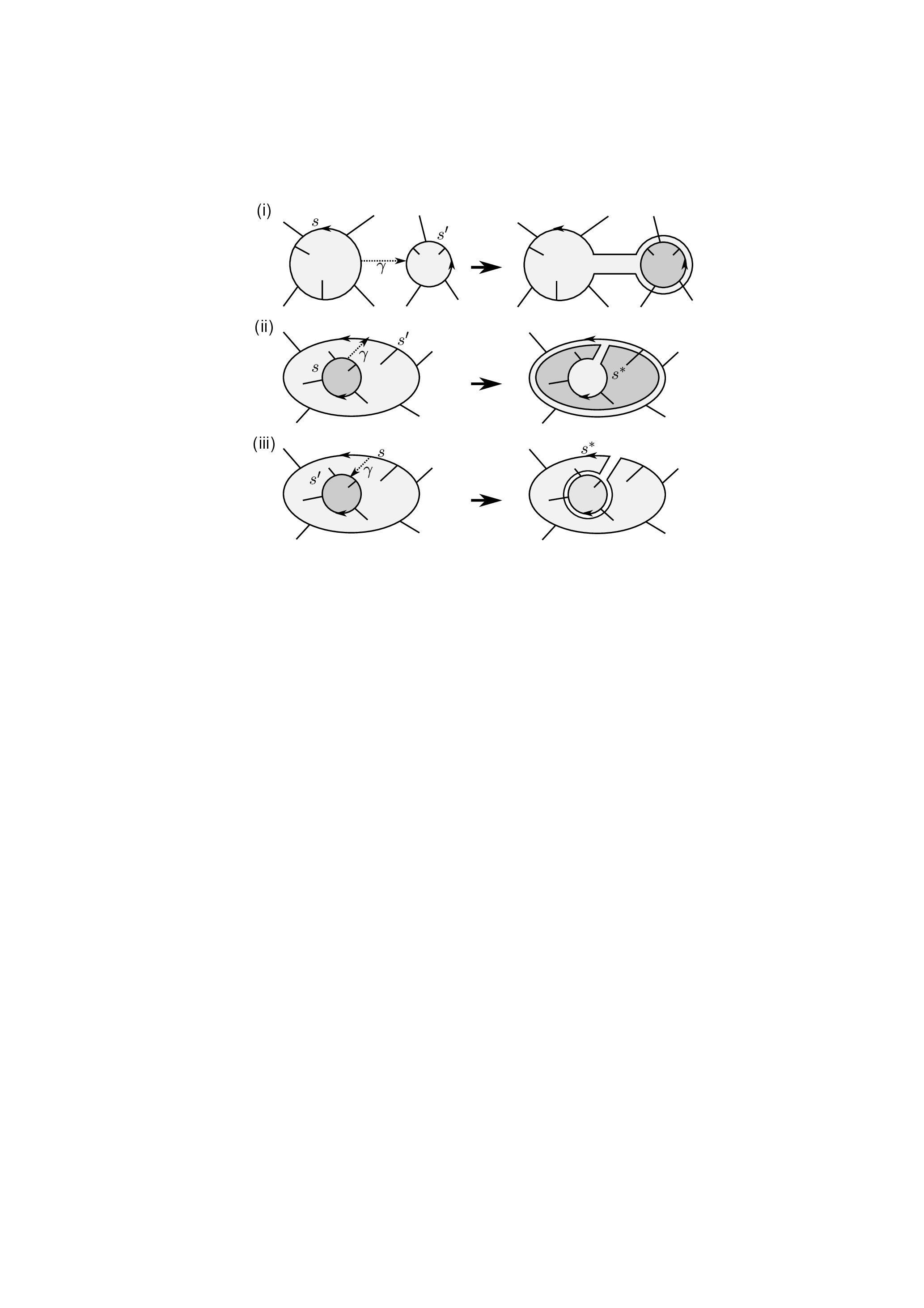}
\caption{Three types of disk-bunching operation.  (The dark gray disk lies above the light gray disk.)}  
\label{fig:bunching}
\end{center}
\end{figure}

\begin{proposition}
Disk-bunching operations are isotopy moves.
If $F$ is quasi-canonical then the resulting surface of disk-bunching operations is also quasi-canonical and $s$ and $s'$ become coherent.  
\end{proposition}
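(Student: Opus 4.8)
The plan is to verify the two assertions of the proposition separately and, for the second, to handle the three types of disk-bunching operation in turn. For the first assertion (that a disk-bunching operation is an isotopy of the surface $F$), I would observe that in each of the three cases the operation only modifies a single spanning disk $D_s$, replacing it by an isotopic disk: enlarging $D_s$ by a collar $N(\gamma)$ and a parallel copy of $D_{s'}$ pushed slightly off $D_{s'}$ (type (i)), flipping $D_s$ across the annulus it cobounds with $D_{s'}$ (type (ii)), or performing a finger move pushing $D_s$ through $\gamma$ past $s'$ (type (iii)). In all three cases the boundary $\partial F$, i.e.\ the link, is untouched, and the new disk is ambient isotopic to the old one rel a neighborhood of $\partial F$ through a motion supported in a neighborhood of $D_s \cup N(\gamma) \cup D_{s'}$; the bands attached to the disks are carried along by this isotopy. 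So the resulting surface is isotopic to $F$.

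For the second assertion I would assume $F = F(s_1,\dots,s_n,b_1,\dots,b_m)$ is quasi-canonical and analyze the new Seifert-circle/band-arc data. In each case the new boundary of the modified disk is a simple closed curve $s^*$ (or $s \cup N(\gamma) \cup s'$-type curve); I would describe $s^*$ explicitly as a planar curve and check that, together with the unchanged circles and the band arcs (some of which must be rerouted along $\gamma$ where they previously met $D_s$), conditions (i)--(iv) still hold. The key homological point is that in each type the gluing arc $\gamma$ reverses an orientation in passing from $s$ to $s'$ precisely because $s$ and $s'$ are incoherent: concatenating $s$, $\gamma$, $s'$, $\overline{\gamma}$ with the correct orientations yields an \emph{embedded} curve (not a figure-eight), and the annulus bounded by $s^*$ and $s'$ now carries coherent fundamental classes. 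This is where the incoherence hypothesis is used, and verifying embeddedness and coherence of $s^*$ with respect to every other circle it may be nested against is the one place that needs genuine care rather than routine bookkeeping.

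The remaining points are checks of (iii) and (iv) for the new data. Since a disk-bunching operation may force a band arc that formerly intersected $D_s$ to be rerouted so that it runs alongside $\gamma$ and terminates on $s^*$, I would argue that such rerouting does not create a new intersection of a band arc with a Seifert circle beyond those dictated by $\gamma$'s disjointness from $s_1\cup\dots\cup s_n \cup b_1\cup\dots\cup b_m$; hence the "at most one intersection" condition (iii) is preserved. For (iv), any band arc meeting a third circle $s''$ already had $s,s',s''$ pairwise coherent before the operation (if the band arc met $D_s$) or is untouched, so after the operation the relevant triples remain pairwise coherent because $s^*$ inherits the coherence class that makes it coherent with $s'$ and hence with whatever $s$ was coherent with. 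Assembling these verifications over the three cases completes the proof.

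\textbf{Anticipated main obstacle.} The hard part will be the type-(ii) and type-(iii) (nested) cases: proving that the replacement curve $s^*=\gamma\cdot s\cdot\overline{\gamma}\cdot s'$ (resp.\ $\overline{\gamma}\cdot s'\cdot\gamma\cdot s$) is genuinely embedded in the plane and, together with the new vertical stacking of disks, still satisfies the nesting/coherence requirements in (i)--(iv) relative to \emph{all} other Seifert circles — not just the pair $s,s'$ directly involved. The non-nested case (i) is essentially immediate once the orientation-reversal coming from incoherence is recorded, but the nested cases require one to track how the flip or finger move interacts with circles enclosed by, or enclosing, both $s$ and $s'$.
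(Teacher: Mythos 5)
There is a genuine gap, and it sits exactly at the one point the paper singles out as non-trivial. After a disk-bunching operation the two old circles $s$ and $s'$ are, in effect, merged into a single new boundary circle $s^{*}$ (the new disk's boundary traverses both a copy of $s$ and a copy of $s'$, joined along $\gamma$). The danger for condition (iii) is therefore \emph{not} that the arc $\gamma$ creates new crossings --- it cannot, since $\gamma$ is chosen disjoint from all circles and band arcs --- but that a band arc $b$ which previously crossed $s$ once \emph{and} $s'$ once would now cross the single circle $s^{*}$ twice. Your paragraph on (iii) only addresses the first, vacuous concern ("rerouting does not create a new intersection beyond those dictated by $\gamma$'s disjointness") and never rules out the second, which is the actual content of the proposition. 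The paper's proof consists essentially of this one observation: if such a $b$ existed, then applying property (iv) to $b$ and each of $s$, $s'$ forces the endpoints of $b$ to be coherent with both $s$ and $s'$, hence (by transitivity of coherence) forces $s$ and $s'$ to be coherent with each other, contradicting the standing hypothesis that the operation is applied to an \emph{incoherent} pair. Without this argument your verification of (iii) does not go through, and your verification of (iv) has the same blind spot (a band arc crossing the old $s$ would, if it existed alongside one crossing $s'$, be incoherent with the new $s^{*}$).

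A secondary remark: the places you flag as the ``main obstacle'' --- embeddedness of $s^{*}$ and its nesting relative to third circles --- are treated by the paper as immediate from the construction (the band sum of two disjoint incoherent circles along an embedded arc missing everything else is automatically an embedded circle coherent with $s'$), whereas the step you treat as routine bookkeeping is the only one requiring an argument. I would rebalance the write-up accordingly: state the isotopy and embeddedness claims briefly, and devote the proof to showing that no band arc meets both $s$ and $s'$.
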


\begin{proof}
The only non-trivial assertion is that of Property (iii). Assume that a band arc $b$ intersects $s^{*}$ more than once, then by property (iii) such a band arc $b$ must intersect both $s$ and $s'$. However, since $s$ and $s'$ are incoherent, such band cannot exist by Property (iv).
\end{proof} 

\begin{theorem}\label{thm:QCSS}
Every quasi-canonical Seifert surfaces can be isotoped to a Bennequin surface without changing the disk-band decomposition structure. (Therefore, the number of Seifert circles and the number of band arcs are preserved.)   
\end{theorem}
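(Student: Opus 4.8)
\textbf{Proof plan for Theorem~\ref{thm:QCSS}.}
The plan is to remove incoherence one pair at a time by repeatedly applying disk-bunching operations, using the count of incoherent pairs as a monovariant. Suppose $F=F(s_1,\dots,s_n,b_1,\dots,b_m)$ is quasi-canonical but not yet isotopic to a Bennequin surface; by Observation~(3) recorded just before the disk-bunching definition, this means some pair of Seifert circles is incoherent. First I would fix such an incoherent pair $s=s_i$, $s'=s_j$. The key point is that one can always find an admissible arc $\gamma$: an oriented arc properly embedded in $\mathbb{R}^2\setminus(s_1\cup\dots\cup s_n\cup b_1\cup\dots\cup b_m)$ from $s$ to $s'$. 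Since the $s_k$ and $b_\ell$ are pairwise disjoint compact $1$-complexes in the plane, their complement is an open surface whose components are path-connected; I need to argue that $s$ and $s'$ lie on the boundary of a common complementary region, which follows because one can travel from $s$ toward $s'$ and, whenever blocked by an intervening circle or band arc, run alongside it — formally, pick any path and push it off the obstructions; incoherence of $s,s'$ guarantees no band arc separates them in a way that obstructs this (if a band arc $b$ touched both, property (iv) would force $s,s'$ coherent). Then apply the disk-bunching operation of the appropriate one of the three types (non-nested; $s'$ encloses $s$; $s$ encloses $s'$) determined by the nesting relation of $s$ and $s'$ in $\mathbb{R}^2$.

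Next I would invoke the Proposition immediately preceding this theorem: the disk-bunching operation is an ambient isotopy, it preserves quasi-canonicity, it does not alter the abstract disk-band decomposition (same $n$ Seifert disks, same $m$ bands with the same signs and the same combinatorial attaching data), and afterwards $s$ and $s'$ have become \emph{coherent}. The remaining work is to check that this strictly decreases the number of incoherent pairs, i.e.\ that no \emph{previously coherent} pair is turned incoherent. This is where I would spend the most care: after the operation the disk $D_s$ has been enlarged/flipped/fingered, so its $H_1$-class in the annulus it cospans with a third circle $s''$ could a priori change. The claim is that the homology class $[s]\in H_1(\text{annulus with }s'')$ only changes for those $s''$ that were incoherent-or-coherent in a controlled way with both $s$ and $s'$; concretely, since $s^*$ is homologous (as an oriented curve, after the isotopy, in the complement of $s''$) either to $s$ or to $s\cup s'$ suitably oriented, and $s,s'$ were incoherent — hence $[s]=-[s']$ — any $s''$ coherent with $s$ is incoherent with $s'$ and vice versa, so the coherence pattern is merely permuted on the set $\{s,s'\}$ while the net count of incoherent pairs among all triples involving $s$ or $s'$ drops by exactly one (the pair $\{s,s'\}$ itself) and is unchanged for every other pair. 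I would verify this by a direct homological bookkeeping, treating the three nesting cases separately and using that each disk-bunching move replaces $D_s$ by a disk whose boundary, read in $H_1$ of the complement of any circle not touched by $\gamma$, equals $[s]$ or $[s]+[s']$.

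The induction then closes: each step reduces the finite quantity (number of incoherent pairs of Seifert circles) by one while preserving all the structure, so after finitely many disk-bunching operations we reach a quasi-canonical surface all of whose Seifert circles are pairwise coherent, which by Observation~(3) is isotopic to a Bennequin surface; composing all the isotopies (each disk-bunching move is an isotopy, and the final identification is an isotopy) gives the desired isotopy, and the disk-band decomposition structure — in particular the number $n$ of Seifert circles and the number $m$ of band arcs — has been preserved at every stage. The main obstacle is the bookkeeping in the second paragraph: making precise that a disk-bunching operation applied to an incoherent pair cannot create new incoherent pairs, which requires care about how the enlarged/flipped disk interacts homologically with every other Seifert circle, and in particular using property (iv) to rule out the problematic band-arc intersections.
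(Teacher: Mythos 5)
Your high-level strategy coincides with the paper's (apply disk-bunching operations until all Seifert circles are pairwise coherent, then conclude via observation (3)), but the paper does not attempt the induction you describe: it delegates the entire combinatorial issue --- which pairs to bunch, in what order, and why the process terminates with all circles coherent --- to Theorem 1 of \cite{Y}. The self-contained replacement you propose has two concrete failures. First, an admissible arc $\gamma$ between an \emph{arbitrary} incoherent pair need not exist: a Seifert circle separates the plane, so if some $s''$ has $s$ in its interior and $s'$ in its exterior, every arc from $s$ to $s'$ meets $s''$, and ``running alongside'' $s''$ cannot get you across it. (Band arcs, which $\gamma$ must also avoid, cut the complementary regions further.) You would therefore have to restrict to incoherent pairs cobounding a common complementary region and prove that such a pair exists whenever any incoherent pair does; neither step is addressed.

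Second, the monovariant is false. Note that a nested pair is coherent exactly when the planar orientations agree, and a non-nested pair exactly when they disagree. Take $s,s'$ non-nested and both counterclockwise (hence incoherent), and a clockwise circle $e$ inside $s'$. Then $\{s,e\}$ is non-nested with opposite orientations, hence coherent; but after the type-(i) operation the new circle $s^{*}$ is counterclockwise and encloses $e$, so $\{s^{*},e\}$ is nested with opposite orientations, hence incoherent --- a previously coherent pair has become incoherent. Your intermediate claim that any $s''$ coherent with $s$ must be incoherent with $s'$ is likewise false (take $s''$ counterclockwise enclosing both $s$ and $s'$: it is coherent with both). Placing three such clockwise circles inside $s'$ (and symmetrically inside $s$), each joined to the rest through an intermediate coherent circle so that the configuration is a legitimate connected quasi-canonical surface, the total number of incoherent pairs strictly \emph{increases} no matter which of the two disks you enlarge. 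So the induction does not close, and repairing it amounts to reproving Yamada's theorem; the shortest correct route is to cite it, as the paper does.
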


\begin{proof}
Applying Disk-bunching operations to a quasi-canonical surface $F$, Theorem 1 of \cite{Y} implies that the Seifert circles become pairwise coherent. 
By the observation (3) the resulting surface is isotopic to a Bennequin surface. 
\end{proof}

\begin{corollary}
\label{theorem:canonicalBennequin}    
Let $D$ be a link diagram. The canonical Seifert surface $F_{D}$ can be isotoped to a Bennequin surface $X$ with the following properties:
\begin{enumerate}
\item[(i)] The boundary $\partial X$ is a closed $s(D)$-braid, where $s(D)$ denotes the number of Seifert circles of $D$. 
\item[(ii)] The number of positive (resp. negative) bands of $X$ is equal to the number of positive (resp. negative) crossings of $D$.

\item[(iii)] The self-linking number of the closed braid $K:=\partial X$ is
$sl(K) = -s(D)+w(D)$, where $w(D)$ denotes the writhe of the diagram $D$.
\end{enumerate}
\end{corollary}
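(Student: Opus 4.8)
The plan is to derive Corollary~\ref{theorem:canonicalBennequin} as an immediate consequence of Theorem~\ref{thm:QCSS} together with observation~(1), which says that a canonical Seifert surface is quasi-canonical. First I would apply Theorem~\ref{thm:QCSS} to $F_D$ to obtain an isotopic Bennequin surface $X$; by the statement of that theorem, the disk-band decomposition structure of $F_D$ is preserved, so $X$ has exactly $s(D)$ disks and exactly $w_+(D)+w_-(D)$ bands, one for each crossing of $D$. This already gives most of the content.

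Next I would verify the three enumerated properties in turn. For (i): the boundary of a Bennequin surface built from $s(D)$ parallel disks is by construction a closed braid on $s(D)$ strands, so $\partial X$ is a closed $s(D)$-braid. For (ii): since the disk-bunching operations of Theorem~\ref{thm:QCSS} only slide or flip disks and never alter the sign of a band arc (each band arc $b_i$ carries a fixed sign $\varepsilon_i$ throughout), the positive and negative bands of $X$ are in sign-preserving bijection with the positive and negative crossings of $D$; hence the count of positive (resp. negative) bands of $X$ equals $w_+(D)$ (resp. $w_-(D)$). For (iii): writing $K=\partial X$, the writhe $w(K)$ as a braid equals the algebraic sum of the band signs, which by (ii) equals $w_+(D)-w_-(D)=w(D)$; the braid $K$ has $s(D)$ strands; so by Bennequin's formula $sl(K)=-n+w(K)$ quoted in the introduction we get $sl(K)=-s(D)+w(D)$.

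I do not expect any serious obstacle, since all the hard work is already packaged into Theorem~\ref{thm:QCSS} and observation~(1). The only point requiring a small amount of care is making explicit that the disk-bunching operations preserve band signs and the disk count — but this is visible directly from the definition of the operations (they act only on the disks $D_s$, not on the twisted bands) and from the parenthetical assertion in Theorem~\ref{thm:QCSS} that the numbers of Seifert circles and band arcs are preserved. A secondary minor point is confirming that ``canonical Seifert surface is quasi-canonical'', but this is exactly observation~(1) and follows because in a canonical surface each band arc has interior disjoint from all Seifert circles, so conditions (iii) and (iv) hold vacuously.
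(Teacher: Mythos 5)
Your proposal is correct and is exactly the argument the paper intends: the corollary is stated without a separate proof precisely because it follows from Theorem~\ref{thm:QCSS} applied to $F_D$ (quasi-canonical by observation~(1)), with the disk and band counts, band signs, and the formula $sl(K)=-n+w(K)$ giving (i)--(iii) as you describe. No gaps.
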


We point out the following corollary which is stronger than the  generalized Jones' conjecture. 
\begin{corollary}
\label{cor:diagram-sl}
Let $D$ be a diagram of a link type $\K$ whose number of Seifert circles is equal to the minimum  number of Seifert circles of all diagrams for $\K$. 
Then its writhe $w(D)$ is an invariant of $\K$ and $w(D)=\beta(\K)+SL(\K)$.
\end{corollary}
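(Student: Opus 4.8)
\textbf{Proof proposal for Corollary~\ref{cor:diagram-sl}.}

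The plan is to deduce this from Corollary~\ref{theorem:canonicalBennequin} together with the generalized Jones' conjecture. First I would fix a diagram $D$ of $\K$ whose Seifert circle count $s(D)$ equals the minimum $s_{\min}(\K)$ over all diagrams of $\K$; by Yamada's theorem $s_{\min}(\K)=\beta(\K)$, so $s(D)=\beta(\K)$. Applying Corollary~\ref{theorem:canonicalBennequin}(i) and (iii), the canonical Seifert surface $F_D$ is isotopic to a Bennequin surface whose boundary $K=\partial X$ is a closed $\beta(\K)$-braid with self-linking number $sl(K)=-s(D)+w(D)=-\beta(\K)+w(D)$. Thus $K$ is a braid representative of $\K$ realizing the braid index.

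Next I would invoke the generalized Jones' conjecture (proven by Dynnikov--Prasolov and LaFountain--Menasco, as cited earlier in the excerpt), which states that any two braid representatives of $\K$ realizing the braid index $\beta(\K)$ have the same writhe; denote this common value $w(\K)$, so that $SL(\K)=-\beta(\K)+w(\K)$ as recalled in the introduction. Since $K$ is such a minimal-index braid representative and its writhe equals $w(D)$ (stabilization/destabilization and Markov moves between closed braid diagrams preserve writhe minus strand count, and in fact the writhe of $K$ as a braid equals $w(D)$ because the bands of $X$ are in sign-preserving bijection with the crossings of $D$ by part (ii) of Corollary~\ref{theorem:canonicalBennequin}), we get $w(D)=w(\K)$. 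In particular $w(D)$ depends only on $\K$, not on the choice of minimal-Seifert-circle diagram, establishing invariance. Combining $SL(\K)=-\beta(\K)+w(\K)$ with $w(D)=w(\K)$ gives $w(D)=\beta(\K)+SL(\K)$.

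The one point requiring care — and the main (minor) obstacle — is the identification of the braid-theoretic writhe (exponent sum) of $K=\partial X$ with the diagrammatic writhe $w(D)$. This is exactly what Corollary~\ref{theorem:canonicalBennequin}(ii)--(iii) provide: each $\varepsilon_i$-twisted band of the Bennequin surface $X$ contributes $\varepsilon_i$ to the exponent sum of $K$, and these bands correspond bijectively and sign-preservingly to the crossings of $D$, so the exponent sum of $K$ is $w_+(D)-w_-(D)=w(D)$; then $sl(K)=-s(D)+w(D)$ is just Bennequin's formula. Once this bookkeeping is in place, the statement is immediate from the generalized Jones' conjecture, and the fact that the conclusion is ``stronger than'' the generalized Jones' conjecture is precisely that it computes $w(D)$ for \emph{any} diagram with the minimal Seifert circle number, not merely for closed braid diagrams.
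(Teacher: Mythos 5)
Your proposal is correct and follows essentially the same route as the paper: Yamada's theorem gives $s(D)=\beta(\K)$, Corollary~\ref{theorem:canonicalBennequin} converts $D$ into a closed $\beta(\K)$-braid $K$ with $sl(K)=-s(D)+w(D)$, and the generalized Jones' conjecture (Dynnikov--Prasolov, LaFountain--Menasco) forces $sl(K)=SL(\K)$, whence $w(D)=\beta(\K)+SL(\K)$. Your extra care in identifying the exponent sum of $K$ with the diagrammatic writhe $w(D)$ via the sign-preserving band/crossing bijection is exactly the content already packaged into part (iii) of that corollary, so nothing further is needed.
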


\begin{proof}
Since $D$ realizes the minimum number of Seifert circles of all diagrams for $\K$,  
by \cite[Theorem 3]{Y} we have $s(D)=\beta(\K)$.  Thus by Corollary \ref{theorem:canonicalBennequin} (i), the resulting closed braid $K=\partial X$ attains the braid index $\beta(\K)$, in which case $sl(K)=SL(\K)$ is satisfied  \cite{DynnikovPrasolov,LaFountainMenasco}. By Corollary \ref{theorem:canonicalBennequin} (iii), $w(D)=s(D)+sl(K)=\beta(\K)+SL(\K)$.
\end{proof}

Now we are ready to prove Proposition \ref{prop:TFAE} and Lemma \ref{lemma:Hom-rv}:

\begin{proof}[Proof of Proposition \ref{prop:TFAE}]

(ii) $\Rightarrow$ (iii) follows from Corollary \ref{theorem:canonicalBennequin}, and is shown in \cite{N,R} without assuming that $\K$ is homogeneous.

(iii) $\Rightarrow$ (ii) is proven in \cite[Theorem 1]{Baader} which states that a knot is positive if and only if it is homogeneous and strongly quasipositive.

(iii) $\Rightarrow$ (i) is clear. 

We prove (i) $\Rightarrow$ (ii).
Assume that $SL(\K)=-\chi(\K)$.
Let $P_{\K}(v,z)$ be the HOMFLYPT polynomial defined by the skein relation $v^{-1}P_{+}+vP_{-}=zP_{0}$ and the normalization $P_{\sf unknot}(v,z)=1$. 
For any homogeneous link $\K$ we have 
\[SL(\K)+1 \leq \min \deg_{v}P_{\K}(v,z) \leq 1-\chi(\K)\]
where the left inequality follows from the Morton-Franks-Williams inequality \cite{M, FW} and the right inequality follows by \cite[Theorem 4 (b)]{cr}. 
Since $SL(\K)=-\chi(\K)$ we have $$\min\deg_{v}P_{\K}(v,z) = 1-\chi(\K).$$
Again by \cite[Theorem 4 (b)]{cr}, this is equivalent to positivity of $\K$.

Since (iv) implies (iii), it remains to show that if $\K$ is strongly homogeneous, (i) implies (iv). 
Suppose that $D$ is a homogeneous diagram of $\K$ with $s(D)=\beta(\K).$ 
Let $w_\pm(D)$ be the number of positive/negative crossings of $D$. 
Since $D$ is homogeneous the equation (\ref{eq:useful property}) gives
\begin{align*}
\chi(\K)&=\chi(F_D) = s(D)-(w_+(D) + w_-(D)) = \beta(\K) -(w_+(D) + w_-(D)). 
\end{align*}
On the other hand, by the statement (i)  and Corollary~\ref{cor:diagram-sl} we have 
$$\chi(\K) = -SL(\K) = \beta(\K)-w(D)= \beta(\K)-(w_+(D) - w_-(D)).$$ 
Therefore, $w_-(D)=0$ and we know that $D$ is a positive diagram. 
By Corollary \ref{theorem:canonicalBennequin} (ii) $D$ can be isotoped   to a strongly quasipositive braid diagram with $\beta(\K)$ strands.
\end{proof}

\begin{proof}[Proof of Lemma \ref{lemma:Hom-rv}]

Let $D$ be a non-positive, homogeneous link diagram. 
Since $D$ is non-positive
there is a Seifert circle $s$ which is connected to another Seifert circle by negative bands.
Since $D$ is homogeneous, we may assume that on at least one of the sides of $s$, all the band arcs connected to $s$ have negative sign. 
We treat the case that all the band arcs outside of $D_{s}$ have the negative sign. The other case is treated similarly. 

By Corollary~\ref{theorem:canonicalBennequin} the canonical Seifert surface $F_{D}$ can be isotoped to a Bennequin surface $X$ by disk-bunching operations. During this process, by perhaps reversing the orientations of the $\gamma$-arcs of the disk-bunching operations, we may assume that the initial point of every $\gamma$-arc never lies on $s$. This means that the disk $D_{s}$ is not changed by the disk-bunching operations. 

Since $F_D$ is canonical, the observation (2) states that no band arcs intersect $s$ in their interior. 
Assume that the Seifert circle $s$ gives rise to the $i$-th strand in the closed braid $K=\partial X$. Since disk-bunching operations change neither band arcs nor the Seifert circle $s$, there are no band arcs over crossing $s$. 

This means that $K$ contains no bands of the form $\sigma_{k,l}^{\pm 1}$ with $k<i<l$.
Also, for $j<i$ all the bands connecting the $i$-th strand and the $j$-th strand are negative since these come from band arcs of $F_{D}$ which lie outside of $s$. 
 
These two conditions ensure that in the {\em standard} generators, we may write $K$ so that it contains $\sigma_{i-1}^{-1}$ but no $\sigma_{i-1}$. This implies that the braid $K$ is non-right-veering. 
\end{proof}

\section{Links with $\delta_3> 1$} 

It is easy to see that $\delta_3(\K)\geq 2$ 
does not imply quasipositive in general. For example, the figure-eight knot has $\delta_3(\K)=2$ and is not quasipositive.
In Section~\ref{subsec:delta3>1} we give a table of quasipositive knots that have $\delta_3(\K)>1$. Among them, quasipositivity of 60 knots was previously unknown in KnotInfo \cite{KnotInfo}. 

The following proposition shows that any arbitrarily large number can be realized as the defect $\delta_3$ of a quasipositive link.

\begin{proposition}
For any positive integer $\delta$ there exists a quasipositive prime
link $\K$ of braid index $\beta(\K)=3$  such that the defect $\delta_3(\K)=\delta$.
\end{proposition}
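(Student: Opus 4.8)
The plan is to construct, for each positive integer $\delta$, an explicit family of $3$-braids whose closures are prime, quasipositive, braid index exactly $3$, and have defect exactly $\delta$. The natural candidates come from Xu's normal form for $3$-braids together with the computation of $\delta_3$ via Corollary~\ref{cor:diagram-sl}. Writing $a_1=\sigma_1$, $a_2=\sigma_2$, $a_3=\sigma_{1,3}$, I would take a braid of the form
\[
\beta_\delta \;=\; (a_1 a_2)^{p}\,(a_1^{-1}a_2)^{\delta}
\]
for a suitably large $p$ (say $p \geq 2\delta$), or a close variant thereof. The factor $(a_1^{-1}a_2)^{\delta}$ should be quasipositive: each $a_1^{-1}a_2 = \sigma_1^{-1}\sigma_2$ is conjugate to a band generator (indeed $\sigma_1^{-1}\sigma_2\sigma_1 = \sigma_{1,3}^{\pm}$ up to the relations), so a cyclic rearrangement or conjugation of $\beta_\delta$ exhibits it as a product of conjugates of standard generators, hence quasipositive. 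Primeness of the closure and the value $\beta(\K)=3$ would follow because the braid is (after rewriting) a non-split, non-destabilizable $3$-braid whose exponent sum and structure prevent it from having braid index $\le 2$ — here I would invoke Birman–Menasco's classification/Xu's results to rule out destabilization, and a standard argument (e.g.\ the braid is not a connected sum pattern, or its Alexander/HOMFLYPT data) to get primeness; alternatively one can just exhibit the knots directly for each residue class and check primeness from known tables for small $\delta$, then use a cabling/twisting argument for the general pattern.

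The core computation is the value of $\delta_3$. First I would produce a good diagram: the closed braid $\widehat{\beta_\delta}$ has an obvious diagram whose Seifert circles are the $3$ braid strands, so $s(D)=3=\beta(\K)$, and Corollary~\ref{cor:diagram-sl} gives $w(D)=\beta(\K)+SL(\K)=3+SL(\K)$, i.e.\ $SL(\K)=w(\beta_\delta)-3 = 2p - \delta - 3$ (counting the exponent sum: $2p$ from $(a_1a_2)^p$ and, in standard generators, $a_1^{-1}a_2$ contributes $0$ so $\delta$ copies contribute... — I need to recompute in standard generators since $a_3=\sigma_{1,3}$ has exponent sum $1$; writing everything in standard generators is the safe route). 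Second, I need $\chi(\K)$. The upper bound $\chi(\K)\le \chi(\Sigma_{\beta_\delta})= 3 - (\text{word length})$ is automatic; the matching lower bound — i.e.\ that no Seifert surface does better — is the real content. For this I would use the HOMFLYPT / Morton–Franks–Williams machinery exactly as in the proof of Proposition~\ref{prop:TFAE}: bound $\max\deg_z P_\K$ or the $v$-spread from below using the $3$-braid structure (Morton's inequalities for closed $3$-braids are essentially sharp here), forcing $\chi(\K)$ to equal the Bennequin-surface value minus $2\delta$. Combining, $\delta_3(\K) = \tfrac12(-\chi(\K)-SL(\K))$ telescopes to exactly $\delta$.

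I expect the main obstacle to be the genus/Euler-characteristic computation — showing $\chi(\K)$ is not larger than what the candidate Bennequin surface gives, so that the $\delta$ negative bands genuinely contribute to the defect and cannot be cancelled by some cleverer surface. The cleanest tool is a sharp lower bound on $-\chi(\K)$ from the HOMFLYPT polynomial of a $3$-braid closure (Morton's bound), which for this specific family I would verify is sharp by a direct skein/induction argument on $\delta$: peeling off one $a_1^{-1}a_2$ factor changes the polynomial in a controlled way. A secondary, more bookkeeping-level obstacle is verifying primeness and $\beta(\K)=3$ uniformly in $\delta$; I would handle this by choosing $p$ large enough that the closure is a non-trivial cable/twist pattern to which Birman–Menasco's $3$-braid results apply, ruling out destabilization to a $2$-braid, and by a connected-sum obstruction (the genus is additive while our surface is ``efficient'', or the knot group does not split) to get primeness. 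The quasipositivity claim itself I regard as routine once the braid word is written correctly: it is the same conjugation trick $\sigma_1^{-1}\sigma_2 \approx$ (band generator) used in the proof of Proposition~\ref{prop:3braid-evidence}.
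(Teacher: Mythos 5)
Your overall blueprint (an explicit $3$-braid family; $SL(\K)$ from the generalized Jones conjecture; $\chi(\K)$ from a sharp $3$-braid genus result; primeness and braid index from Birman--Menasco) matches the paper's, but the specific family you propose fails: the closure of $(a_1a_2)^{p}(a_1^{-1}a_2)^{\delta}$ with $p\geq\delta$ is \emph{strongly} quasipositive, so its defect is $0$, not $\delta$. The negative letters are absorbed by the positive block. Concretely, with $a_3=\sigma_{1,3}$ and the band relations $a_2a_1=a_1a_3=a_3a_2$ one has $a_2a_1^{-1}=a_3^{-1}a_2$, hence for $\delta=1$,
\[
(a_1a_2)^{p}\,a_1^{-1}a_2\;\approx\;(a_2a_1)^{p}a_2a_1^{-1}\;=\;(a_2a_1)^{p}a_3^{-1}a_2\;=\;(a_2a_1)^{p-1}(a_1a_3)a_3^{-1}a_2\;=\;(a_2a_1)^{p-1}a_1a_2,
\]
a positive braid word; similarly $(a_1^{-1}a_2)^{2}=a_3^{2}(a_1a_2)^{-1}$ gives $(a_1a_2)^{p}(a_1^{-1}a_2)^{2}\approx(a_1a_2)^{p-1}a_3^{2}$, and so on for larger $\delta$. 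This is also where your quasipositivity argument breaks: $\sigma_1^{-1}\sigma_2$ cannot be conjugate to a band generator, since conjugates of generators have exponent sum $1$ while $\sigma_1^{-1}\sigma_2$ has exponent sum $0$ (the correct identity is $\sigma_1^{-1}\sigma_2=\sigma_{1,3}\sigma_1^{-1}$), and precisely this kind of rewriting cancels your negative bands against the $(a_1a_2)^{p}$ block.

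The missing idea is to choose a word already in Xu's $NP$ normal form, so that the negative part provably survives. The paper takes $K_{\delta}=\sigma_1^{-\delta}\sigma_2^{2}\sigma_{1,3}\sigma_1^{\delta}\sigma_2$ for $\delta$ even (a variant for $\delta$ odd). Xu's theorem \cite{X} then says directly that $\Sigma_{K_{\delta}}$ realizes $\chi(\K)=-1-2\delta$; no HOMFLYPT or Morton--Franks--Williams computation is needed, which is exactly the step you flagged as the main obstacle and left unverified. Quasipositivity is immediate from the form of the word, $SL(\K)=sl(K_{\delta})=1$ gives $\delta_3(\K)=\delta$, and primeness follows from Birman--Menasco's theorem that a composite closed $3$-braid is conjugate to $\sigma_1^{p}\sigma_2^{q}$ --- much cleaner than the genus-additivity or knot-group arguments you sketch. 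If you want to keep a family of your own, you must first reduce it to Xu's normal form and verify that negative letters remain; as shown above, in your family they do not.
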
 

\begin{proof}
Fix an integer $\delta>0$.
Let $\K$ be a knot represented by a 3-braid $K_\delta$ defined by the following. 
$$
K_\delta = \left\{
\begin{array}{lc}
\sigma_1^{-\delta} \sigma_2^2 \ \sigma_{1,3} \ \sigma_1^\delta \sigma_2 & \delta = \mbox{even}\\
\sigma_1^{-\delta} \sigma_2 \ \sigma_{1,3}  \ \sigma_1^\delta \sigma_2^2 &\delta = \mbox{odd}
\end{array}
\right.
$$
Since a closed 3-braid is non-prime if and only if it is conjugate to $\sigma_1^{p}\sigma_2^{q}$ for $p,q>0$ \cite{BirmanMenasco}, we see that $K_{\delta}$ is prime.
Since $K_\delta$ is in Xu's minimal form \cite{X}, its Bennequin surface $\Sigma_{K_\delta}$ has the minimal genus and  
$$\chi(\K)=\chi(\Sigma_{K_\delta})=-1-2\delta.$$
Since $\beta(\K)=\beta(K_\delta)=3$, by the truth of the generalized Jones conjecture we have 
$$SL(\K)=sl(K_\delta)=1.$$
Hence, $\delta(\K) = \delta$. 
\end{proof}

\section{Proof of Theorem \ref{theorem:4SQPtoQP}}\label{sec:proof}

Let $\K$ be a link type in $S^3$. 
Suppose that $\K$ has a 4-braid representative $K$ in the band generators $\{\sigma_{i,j} \ | \ 1 \leq i < j \leq 4\}$ such that $K$ contains only one negative band.  

We may assume that there is a strongly quasipositive braid $\beta$ such that either
\begin{itemize}
\item
(Case 1) $K = \sigma_{1,2}^{-1} \beta$, or 
\item
(Case 2) $K = \sigma_{1,3}^{-1} \beta$.
\end{itemize}

\subsection{Case 1: The negative band is $\sigma_{1,2}^{-1}$}

Let us write $K$ of the form
$$K= \sigma_{1,2}^{-1} R_1 S_1 R_2 \cdots S_{n-1} R_n$$
where
\begin{itemize}
\item
$R_i$ is a SQP word in the letters $\sigma_{2,3}, \sigma_{3,4}$ and $\sigma_{2,4}$. ($R_i$ needn't use all of these letters.)
\item
$S_i$ is a SQP word in the letters $\sigma_{1,3}$ and $\sigma_{1,4}$. ($S_i$ needn't use both of these letters.)
\end{itemize}
We call such a braid representative of $K$ \emph{standard}.
We define the \emph{complexity} of a standard representative 
$K= \sigma_{1,2}^{-1} R_1 S_1 R_2 \cdots S_{n-1} R_n$ by $$(n,-l(R_1),-l(R_2),\ldots, -l(R_n)),$$ where $l(R_i)$ is the length of $R_i$. We compare the complexities of braid representatives by the standard lexicographical ordering.
Suppose that 
$K = \sigma_{1,2}^{-1}R_1 S_1\cdots R_{n-1}S_{n-1}R_n$
is a standard representative of $\K$ that attains the minimum complexity among all the standard representatives.

We use not $l(R_{i})$ but $-l(R_{i})$ in the definition of complexity. Thus, a minimum representative means that we take each $R_i$ to be as long as possible. This counter-intuitive definition of complexity comes from an analogy to Garside (greedy) normal form (see \cite{bkl,bkl2} for normal form of braids).

We will determine the forms of the strongly quasipositive braids $R_{i}$ and $S_{i}$. 
If $n=1$ then $K = \sigma_{1,2}^{-1} R_1$ clearly admits a negative destabilization, so we may assume that $n>1$. 

\begin{claim}
\label{claim:obs0}
$R_{n} \neq 1$ {\rm (the trivial word)}
\end{claim}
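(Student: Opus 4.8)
\textbf{Proof plan for Claim~\ref{claim:obs0}.}
The plan is to argue by contradiction: suppose $R_n = 1$, so that the minimum-complexity standard representative is
$K = \sigma_{1,2}^{-1} R_1 S_1 R_2 \cdots R_{n-1} S_{n-1}$ with $n > 1$. The goal is to produce a standard representative of $\K$ with strictly smaller complexity, contradicting minimality. The key point is that $S_{n-1}$ is a positive word in $\{\sigma_{1,3},\sigma_{1,4}\}$, and such bands interact nicely with a cyclic rotation of the braid closure. Since $\K$ is the \emph{closure} of $K$, we are free to cyclically permute: $K \approx S_{n-1}\sigma_{1,2}^{-1} R_1 S_1 R_2 \cdots R_{n-1}$.

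First I would analyze how $S_{n-1}$ commutes past $\sigma_{1,2}^{-1}$. Using the band-generator relations recalled in the introduction — in particular the relations among $\sigma_{i,j}$ for overlapping index pairs — one checks that conjugating $\sigma_{1,2}^{-1}$ by a letter $\sigma_{1,3}$ or $\sigma_{1,4}$ (equivalently, sliding the negative band $\sigma_{1,2}^{-1}$ past such a positive band) either leaves a single negative band of the form $\sigma_{1,k}^{-1}$ together with positive bands, or produces a cancellation. The cleanest route is to move $S_{n-1}$ leftward past $\sigma_{1,2}^{-1}$ and absorb the result, so that after the cyclic rotation the new representative has the form $\sigma_{1,a}^{-1}(\text{positive word})$ that can be rewritten as a standard representative with either fewer blocks ($n$ decreases) or with $R_1$ strictly longer (and $n$ unchanged), i.e.\ strictly smaller complexity in the lexicographic order on $(n, -l(R_1), \dots)$. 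If instead a cancellation occurs, then the braid is already strongly quasipositive — but then $\delta_3$-considerations (the negative band count) force it to be a negative destabilization or otherwise degenerate, again giving a strictly smaller standard representative, contradiction.

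The step I expect to be the main obstacle is the bookkeeping in the case analysis when $S_{n-1}$ uses both $\sigma_{1,3}$ and $\sigma_{1,4}$, and when the result of sliding $\sigma_{1,2}^{-1}$ past these bands reintroduces letters that must be redistributed among the $R_i$'s and $S_i$'s to recover a genuinely \emph{standard} form. One must verify that the re-standardization can always be done so that the first block $R_1$ of the new representative strictly absorbs the former $R_2$ (or more), which is exactly where the counter-intuitive $-l(R_i)$ convention in the complexity — mimicking the greedy Garside normal form — is designed to make the induction go through. Concretely, I would show that after the rotation and the slides, every positive band that was in $R_1$ through $R_{n-1}$ and every new positive band created lands to the right of $\sigma_{1,a}^{-1}$ with no positive band of $R$-type blocked from merging into the leading $R_1$-block, so $l(R_1)$ strictly increases (or $n$ drops), completing the contradiction.
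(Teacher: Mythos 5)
Your plan is the paper's proof in outline: cyclically permute so that $S_{n-1}$ sits in front of $\sigma_{1,2}^{-1}$, slide it past the negative band, absorb the result into $R_1$, and contradict minimality of the complexity. But the step you flag as the ``main obstacle'' is exactly where your write-up leaves a gap, and the hedged alternatives you allow would actually derail the argument if they occurred. The whole point is the single band relation $\sigma_{1,2}\,\sigma_{1,d}\,\sigma_{1,2}^{-1}\sim\sigma_{2,d}$ for $d=3,4$ (an instance of $\sigma_{j,k}\sigma_{i,j}=\sigma_{i,j}\sigma_{i,k}$). Writing
$K\approx S_{n-1}\sigma_{1,2}^{-1}R_1S_1\cdots R_{n-1}\sim\sigma_{1,2}^{-1}\bigl(\sigma_{1,2}S_{n-1}\sigma_{1,2}^{-1}\bigr)R_1S_1\cdots R_{n-1}$,
the conjugate $\sigma_{1,2}S_{n-1}\sigma_{1,2}^{-1}$ is letter-by-letter a positive word in $\sigma_{2,3},\sigma_{2,4}$ --- i.e.\ an $R$-type word --- so it prepends to $R_1$ with no case analysis, no cancellation, and no redistribution among the blocks; the negative band remains $\sigma_{1,2}^{-1}$ and the number of $R$-blocks drops from $n$ to $n-1$, so the first coordinate of the complexity already decreases. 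By contrast, the outcomes you leave open (the negative band turning into some $\sigma_{1,k}^{-1}$ with $k\neq 2$, or a cancellation followed by ``$\delta_3$-considerations'') are not what happens, and a negative band $\sigma_{1,3}^{-1}$ or $\sigma_{1,4}^{-1}$ would take you out of the Case-1 standard form entirely, so the complexity comparison could not even be invoked. You need to replace the hedged case analysis with the explicit conjugation identity; once you do, the proof closes in two lines.
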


\begin{proof}
We observe that $\sigma_{1,2}\sigma_{1,d}\sigma_{1,2}^{-1}\sim \sigma_{2,d}$ for $d=3,4$ and hence $\sigma_{1,2}S_{n-1}\sigma_{1,2}^{-1}$ is a product of $\sigma_{2,3}$ and $\sigma_{2,4}$. If $R_{n} =1$ then 
\begin{align*}
K &\approx S_{n-1} \sigma_{1,2}^{-1} R_1 S_1 R_2 \cdots R_{n-1}\\
&\sim  
(\sigma_{1,2}^{-1}\sigma_{1,2})   S_{n-1} \sigma_{1,2}^{-1} R_1 S_1 R_2 \cdots R_{n-1} \\
&= \sigma_{1,2}^{-1}(\sigma_{1,2} S_{n-1} \sigma_{1,2}^{-1}) R_1 S_1 R_2 \cdots R_{n-1}  \\
&= \sigma_{1,2}^{-1}R_1' S_1 R_2 \cdots R_{n-1}
\end{align*}
where $R'_{1}:=(\sigma_{1,2} S_{n-1} \sigma_{1,2}^{-1}) R_1$. Thus we get a smaller complexity standard representative, which is a contradiction. 
\end{proof}

\begin{claim}
\label{claim:obs1}
If $S_{i}$ contains $\sigma_{1,3}$ and $R_{j}$ contains $\sigma_{2,3}$ 
for some $i<j$, then $K$ is quasipositive.
Similarly, if 
$S_{i}$ contains $\sigma_{1,4}$ and $R_{j}$ contains $\sigma_{2,4}$ 
for some $i<j$, then $K$ is quasipositive.
\end{claim}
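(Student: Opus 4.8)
\textbf{Proof proposal for Claim~\ref{claim:obs1}.} The plan is to exploit the mixed commutation/braid relations among band generators to ``push'' the offending positive band of $R_j$ leftward past all intervening words until it reaches, and cancels against, the single negative band $\sigma_{1,2}^{-1}$, at which point the cancellation releases a positive band that can be absorbed, leaving a braid word in which every letter is a positive \emph{conjugate} of a standard generator — i.e., a quasipositive word. Concretely, in the first case we are given $S_i$ containing $\sigma_{1,3}$ and $R_j$ containing $\sigma_{2,3}$ with $i<j$; after cyclic and local rearrangements (using that $R_i$'s letters lie in $\{\sigma_{2,3},\sigma_{3,4},\sigma_{2,4}\}$ and $S_i$'s in $\{\sigma_{1,3},\sigma_{1,4}\}$) we isolate a subword of the shape $\sigma_{1,3}\,W\,\sigma_{2,3}$ where $W$ is positive, and I would use the band-generator relations (in particular $\sigma_{1,3}\sigma_{2,3} \sim \sigma_{1,2}\sigma_{1,3}$, together with the far-commutation $\sigma_{i,j}\sigma_{k,l}=\sigma_{k,l}\sigma_{i,j}$ for $i<j<k<l$ and the triangle relations $\sigma_{j,k}\sigma_{i,j}=\sigma_{i,j}\sigma_{i,k}=\sigma_{i,k}\sigma_{j,k}$) to extract a $\sigma_{1,2}$ factor that migrates all the way to the front and annihilates $\sigma_{1,2}^{-1}$.

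First I would set up the bookkeeping: fix the \emph{leftmost} index $i$ for which $S_i$ contains $\sigma_{1,3}$ and, among those, the leftmost $j>i$ for which $R_j$ contains $\sigma_{2,3}$; write $S_i = S_i' \sigma_{1,3} S_i''$ and $R_j = R_j' \sigma_{2,3} R_j''$ with $S_i', S_i'', R_j', R_j''$ positive in the appropriate letter sets. The middle block $M := S_i'' R_{i+1} S_{i+1} \cdots R_{j-1} S_{j-1} R_j'$ consists of letters from both families, but the key structural point is that $\sigma_{1,3}$ commutes with $\sigma_{2,4}$ is \emph{false} (they share no index pattern that commutes), so the commutation analysis must be done carefully — this is the crux. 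Next I would verify the local identities $\sigma_{1,3}\,\sigma_{3,4} \sim \sigma_{3,4}\,\sigma_{1,4}$ (a triangle relation rewriting), $\sigma_{1,3}\,\sigma_{2,4}$: here $1<2<3<4$ but the pair $(1,3),(2,4)$ is \emph{linked}, so one instead uses $\sigma_{1,3}\sigma_{2,4} \sim \sigma_{2,4}\sigma_{1,3}$ fails and we must reroute; and $\sigma_{1,3}$ with $\sigma_{2,3}$: $\sigma_{1,3}\sigma_{2,3}\sim \sigma_{1,2}\sigma_{1,3}$. The upshot I expect is that as $\sigma_{1,3}$ is transported rightward through $M$, it may get replaced by $\sigma_{1,4}$ (when it passes a $\sigma_{3,4}$) and conjugated by intervening bands, but it always remains a band of the form $\sigma_{1,*}$, and when it finally meets $\sigma_{2,3}$ (or whatever $\sigma_{2,*}$ it has become compatible with) it produces a $\sigma_{1,2}$-type band that then commutes or braid-relates its way left to cancel $\sigma_{1,2}^{-1}$.

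The main obstacle — the step I expect to carry the real weight of the argument — is controlling the interaction of $\sigma_{1,3}$ (or its transported avatar $\sigma_{1,4}$) with the \emph{linked} band generators $\sigma_{2,4}$ and $\sigma_{2,3}$ appearing inside $M$, since these do not simply commute and instead require triangle-relation rewrites that change which bands appear. I would handle this by an induction on the length of $M$: either the first letter of $M$ commutes with the $\sigma_{1,3}$ (far-commutation case), in which case push past and shorten $M$; or it is one of the finitely many linked/adjacent cases, in which case a single braid-relation rewrite either (a) turns the pair into a positive word already containing a conjugate-of-standard-generator decomposition, finishing immediately, or (b) produces a shorter configuration of the same type. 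A complementary point to check is that the words $R_j', R_j''$ and the leftover pieces reassemble into something manifestly quasipositive once the $\sigma_{1,2}^{\pm1}$ pair is killed — this is routine since a product of $\sigma_{1,2}$-conjugated positive band words is quasipositive — but it must be stated. The second case ($\sigma_{1,4}$ in $S_i$ and $\sigma_{2,4}$ in $R_j$) is entirely symmetric under the relabeling analogous to conjugation by the half-twist $\Delta$, or can be re-derived verbatim with $3$ replaced by $4$ and using $\sigma_{1,4}\sigma_{2,4}\sim\sigma_{1,2}\sigma_{1,4}$; I would either run it in parallel or invoke the symmetry to dispatch it in one line.
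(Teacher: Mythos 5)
Your overall strategy --- transporting the $\sigma_{1,3}$ (or the $\sigma_{2,3}$) step by step through the middle block $M$ by two-letter rewrites --- has a genuine gap that you flag yourself but do not close. The block $M$ may contain letters $\sigma_{2,4}$ (from the intermediate $R_k$'s), and the pair $\sigma_{1,3},\sigma_{2,4}$ is linked: in the band-generator presentation there is no two-letter relation at all between linked bands (neither far-commutation nor a triangle relation), so your dichotomy ``either far-commute past it, or a single braid-relation rewrite finishes or shortens the configuration'' has no branch covering this case. Moreover the local identity you propose for the adjacent case is false: the triangle relation gives $\sigma_{3,4}\sigma_{1,3}=\sigma_{1,3}\sigma_{1,4}=\sigma_{1,4}\sigma_{3,4}$, so one may push $\sigma_{1,3}$ \emph{leftward} past $\sigma_{3,4}$ (turning it into $\sigma_{1,4}$), but $\sigma_{1,3}\sigma_{3,4}\neq\sigma_{3,4}\sigma_{1,4}$, and pushing rightward produces $\sigma_{3,4}^{-1}\sigma_{1,3}\sigma_{3,4}$, which is not a band generator. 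As written, the induction on the length of $M$ does not go through.

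The point you are missing is that no transport is needed. Write $K=\sigma_{1,2}^{-1}\beta\,\sigma_{1,3}\,\beta'\,\sigma_{2,3}\,\beta''$ with $\beta,\beta',\beta''$ strongly quasipositive (this uses only that the chosen $\sigma_{1,3}$ occurs before the chosen $\sigma_{2,3}$), and substitute $\sigma_{1,3}=\sigma_{2,3}\sigma_{1,2}\sigma_{2,3}^{-1}$ \emph{in place}:
\[
K\;\sim\;(\sigma_{1,2}^{-1}\,\beta\sigma_{2,3}\,\sigma_{1,2})\,(\sigma_{2,3}^{-1}\,\beta'\,\sigma_{2,3})\,\beta'' .
\]
The first factor is a conjugate of the strongly quasipositive word $\beta\sigma_{2,3}$, the second is a conjugate of $\beta'$, and $\beta''$ is strongly quasipositive, so $K$ is a product of quasipositive braids and hence quasipositive. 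The $\sigma_{2,3}^{-1}$ created by the substitution is absorbed as a conjugating bracket closed by the later $\sigma_{2,3}$, and the $\sigma_{1,2}$ closes the bracket opened by the initial $\sigma_{1,2}^{-1}$; nothing ever has to move past the intervening letters, so the structure of $M$ is irrelevant. You do quote the needed relation ($\sigma_{1,3}\sigma_{2,3}\sim\sigma_{1,2}\sigma_{1,3}$, equivalently $\sigma_{1,3}=\sigma_{2,3}\sigma_{1,2}\sigma_{2,3}^{-1}$); the fix is to use it as a single substitution-and-regroup rather than as a rewriting rule to be iterated. The second assertion is identical using $\sigma_{1,4}=\sigma_{2,4}\sigma_{1,2}\sigma_{2,4}^{-1}$.
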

\begin{proof}
If $S_{i}$ contains $\sigma_{1,3}$ and $R_{j}$ contains $\sigma_{2,3}$ 
for some $i<j$, then 
$K=\sigma_{1,2}^{-1}\beta \sigma_{1,3}\beta'\sigma_{2,3}\beta'' $
for some strongly quasipositive braids $\beta,\beta'$ and $\beta''$. By replacing $\sigma_{1,3}$ with $\sigma_{2,3}\sigma_{1,2}\sigma_{2,3}^{-1}$ we get 
\begin{align*}
K \sim (\sigma_{1,2}^{-1}\beta \sigma_{2,3}\sigma_{1,2})(\sigma_{2,3}^{-1}\beta'\sigma_{2,3})\beta''.
\end{align*}
This shows that $K$ is quasipositive.
The latter assertion is proven similarly.
\end{proof}

Now we are able to determine $S_i$.

\begin{claim}
\label{claim:S}
If there exists an $i$ such that $\sigma_{1,4} \in S_i$
then $K$ is quasipositive.
\end{claim}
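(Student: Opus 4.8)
\textbf{Proof proposal for Claim~\ref{claim:S}.}

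The plan is to suppose $\sigma_{1,4}\in S_i$ for some $i$ and derive that $K$ is quasipositive by pushing the band $\sigma_{1,4}$ toward the initial negative band $\sigma_{1,2}^{-1}$ and cancelling. By Claim~\ref{claim:obs1}, we may assume that no $R_j$ with $j>i$ contains $\sigma_{2,4}$; otherwise $K$ is already quasipositive and we are done. So the letters occurring to the right of this particular $\sigma_{1,4}$ are drawn from $\{\sigma_{2,3},\sigma_{3,4},\sigma_{1,3},\sigma_{1,4}\}$, none of which is $\sigma_{2,4}$. I would first record the relevant band-generator commutation relations: $\sigma_{1,4}$ commutes with $\sigma_{2,3}$ (disjoint supports, $1<2<3<4$), and the triangle relations $\sigma_{3,4}\sigma_{1,4}=\sigma_{1,4}\sigma_{1,3}=\sigma_{1,3}\sigma_{3,4}$ and $\sigma_{1,4}\sigma_{1,3}\sim\ldots$ etc. from the band-generator presentation. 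These let one move the distinguished $\sigma_{1,4}$ leftward past everything occurring before it until it sits immediately after $\sigma_{1,2}^{-1}$, at the cost of replacing some $\sigma_{1,3}$'s by $\sigma_{3,4}$'s (or conversely) and some $\sigma_{3,4}$'s by $\sigma_{1,3}$'s — all positive band letters, so strong quasipositivity of the tail is preserved.

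Concretely, I would argue: conjugating cyclically is not needed; instead write $K=\sigma_{1,2}^{-1}\,\beta_0\,\sigma_{1,4}\,\beta_1$ where $\beta_0$ is the strongly quasipositive word consisting of everything strictly between $\sigma_{1,2}^{-1}$ and the chosen $\sigma_{1,4}$, and $\beta_1$ is the (strongly quasipositive) remainder. Using the commutation/triangle relations above, I would show $\sigma_{1,2}^{-1}\beta_0\sigma_{1,4}\sim\sigma_{1,2}^{-1}\sigma_{1,4}\beta_0'$ for a strongly quasipositive $\beta_0'$ obtained from $\beta_0$ by the letter substitutions forced by sliding $\sigma_{1,4}$ across it (the key point being that $\beta_0$ itself, lying in the standard-representative blocks $R_j,S_j$ with $j\le i$, uses only letters from $\{\sigma_{2,3},\sigma_{3,4},\sigma_{2,4},\sigma_{1,3},\sigma_{1,4}\}$, and each of these either commutes with $\sigma_{1,4}$ or is exchanged for another positive letter by a triangle relation). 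Finally $\sigma_{1,2}^{-1}\sigma_{1,4}=\sigma_{1,2}^{-1}(\sigma_{1,2}\sigma_{2,4})\sim\sigma_{2,4}$ by the triangle relation $\sigma_{1,4}=\sigma_{1,2}\sigma_{2,4}\cdots$ — more precisely $\sigma_{2,4}\sigma_{1,2}=\sigma_{1,2}\sigma_{1,4}$ gives $\sigma_{1,2}^{-1}\sigma_{1,4}=\sigma_{1,2}^{-1}\sigma_{1,2}^{-1}\sigma_{2,4}\sigma_{1,2}$... so instead I would use the identity in the form $\sigma_{1,2}^{-1}\sigma_{1,4}\sigma_{1,2}\sim\sigma_{2,4}$ and absorb the trailing $\sigma_{1,2}$ into $\beta_0'$ or handle it by conjugation, yielding $K\approx(\text{a product of positive conjugates of band generators})$. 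Since every band generator is itself a conjugate of a standard generator, the resulting word is quasipositive.

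The main obstacle I anticipate is bookkeeping: verifying that sliding the distinguished $\sigma_{1,4}$ all the way to the left genuinely keeps the intervening word positive, i.e. that every triangle relation invoked replaces a positive letter by a positive letter and never introduces an inverse. One has to be careful about the \emph{order} in which $\sigma_{1,4}$ meets the letters of $\beta_0$ — meeting $\sigma_{1,3}$ versus $\sigma_{3,4}$ triggers different substitutions, and a $\sigma_{1,4}$ already present in $\beta_0$ must be handled (two copies of $\sigma_{1,4}$ commute, so that case is harmless). A clean way to organize this, which I would adopt, is to process $\beta_0$ from right to left one letter at a time, maintaining the invariant ``the suffix to the right of the travelling $\sigma_{1,4}$, together with $\beta_1$, is strongly quasipositive and $\sigma_{2,4}$-free''; the exclusion of $\sigma_{2,4}$ (guaranteed by Claim~\ref{claim:obs1}) is exactly what prevents a problematic triangle relation $\sigma_{2,4}\sigma_{1,4}=\sigma_{1,4}\sigma_{1,2}$ from forcing the negative-free tail to spit out a $\sigma_{1,2}$ in an inconvenient place. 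With that invariant in hand the induction on $l(\beta_0)$ closes and the final cancellation $\sigma_{1,2}^{-1}\sigma_{1,4}\leadsto\sigma_{2,4}$ (up to conjugation) finishes the proof.
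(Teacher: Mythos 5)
There is a genuine gap, and it sits exactly where you flag uncertainty: the terminal ``cancellation'' $\sigma_{1,2}^{-1}\sigma_{1,4}\leadsto\sigma_{2,4}$ does not exist. From $\sigma_{2,4}\sigma_{1,2}=\sigma_{1,2}\sigma_{1,4}=\sigma_{1,4}\sigma_{2,4}$ one gets $\sigma_{1,2}\sigma_{1,4}\sigma_{1,2}^{-1}=\sigma_{2,4}$, but $\sigma_{1,2}^{-1}\sigma_{1,4}\sigma_{1,2}$ is only \emph{some conjugate} of $\sigma_{2,4}$, and writing $\sigma_{1,2}^{-1}\sigma_{1,4}\gamma=(\sigma_{1,2}^{-1}\sigma_{1,4}\sigma_{1,2})(\sigma_{1,2}^{-1}\gamma)$ merely relocates the negative letter; nothing is absorbed. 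A single positive band adjacent to $\sigma_{1,2}^{-1}$ cannot kill it: the braid $\sigma_{1,2}^{-1}\sigma_{1,4}\sigma_{3,4}$, to which your argument applies verbatim (empty $\beta_0$, no $\sigma_{2,4}$ to its right), closes to the unknot with self-linking number $-4+1=-3$, whereas every quasipositive braid representative of the unknot has self-linking number $-\chi_4=-1$; so that braid is \emph{not} quasipositive, and any scheme that would certify it as such is unsound. To absorb $\sigma_{1,2}^{-1}$ one needs a positive $\sigma_{1,2}$ manufactured \emph{later} in the word, e.g.\ by rewriting a later $\sigma_{1,4}$ as $\sigma_{2,4}\sigma_{1,2}\sigma_{2,4}^{-1}$, whose auxiliary $\sigma_{2,4}^{-1}$ must in turn be swallowed by a yet later $\sigma_{2,4}$. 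That is precisely the mechanism of Claim~\ref{claim:obs1}; the matching $\sigma_{2,4}$ (or $\sigma_{2,3}$) occurring \emph{after} the distinguished letter is the engine of the proof, not a configuration to be assumed away at the outset.

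The leftward slide is also not available as stated. The band relation for the triple $(1,3,4)$ reads $\sigma_{3,4}\sigma_{1,3}=\sigma_{1,3}\sigma_{1,4}=\sigma_{1,4}\sigma_{3,4}$ (not $\sigma_{3,4}\sigma_{1,4}=\sigma_{1,4}\sigma_{1,3}$ as you wrote); it lets the travelling $\sigma_{1,4}$ pass a $\sigma_{1,3}$ standing to its left, but there is no relation rewriting $\sigma_{3,4}\sigma_{1,4}$ or $\sigma_{2,4}\sigma_{1,4}$ as a product of two positive band letters with a $\sigma_{1,*}$ in front, and $\beta_0\supseteq R_1S_1\cdots R_i$ may well contain $\sigma_{3,4}$ and $\sigma_{2,4}$ (Claim~\ref{claim:obs1} only excludes $\sigma_{2,4}$ from $R_j$ with $j>i$). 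Finally, your argument never invokes the minimal-complexity hypothesis, whereas the paper's proof consists entirely of showing that minimality (with Claim~\ref{claim:obs0}) forces a configuration to which Claim~\ref{claim:obs1} applies: if $S_{n-1}=\sigma_{1,4}^p$ then $R_n$ must contain $\sigma_{2,4}$, and if $S_{n-1}$ contains $\sigma_{1,3}$ then $R_n$ must contain $\sigma_{2,3}$ or $\sigma_{2,4}$, the remaining cases being eliminated because a cyclic permutation would lower the complexity. The claim does not admit the direct letter-pushing proof you propose.
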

\begin{proof}
First we will show that 
if $S_{n-1}=\sigma_{1,4}^{p}$ then $K$ is quasipositive; therefore, we may assume that $S_{n-1}$ contains $\sigma_{1,3}$:

If  $S_{n-1}= \sigma_{1,4}^{p}$ and 
$R_{n}$ contains $\sigma_{2,4}$ then by Claim \ref{claim:obs1} $K$ is quasipositive. 

Assume $S_{n-1}= \sigma_{1,4}^{p}$ and 
$R_{n}$ does not contain $\sigma_{2,4}$.
Since $R_{n}\neq 1$ by Claim \ref{claim:obs0}, $R_{n}=\sigma_{2,3}^{q}\sigma_{3,4}^{r}$ for some $q,r \geq 0$ with $q+r>0$. We get 
\begin{align*}
K &= \sigma_{1,2}^{-1}R_1 S_1 \cdots S_{n-2} R_{n-1}\sigma_{1,4}^{p}\sigma_{2,3}^{q}\sigma_{3,4}^{r} \\
& \approx \sigma_{1,2}^{-1} (\sigma_{2,4}^p \sigma_{3,4}^{r}R_1)S_1 \cdots S_{n-2} (R_{n-1}\sigma_{2,3}^{q}) \\
&=  \sigma_{1,2}^{-1} R_1' S_1 \cdots S_{n-2} R_{n-1}'
\end{align*}
where $R'_{1}=\sigma_{2,4}^p \sigma_{3,4}^{r}R_1$ and $R'_{n-1}=R_{n-1}\sigma_{2,3}^{q}$. 
So we get a standard representative of smaller complexity, a contradiction.

Now assume that $S_{n-1}$ contains $\sigma_{1,3}$ and $S_i$ contains $\sigma_{1,4}$ for some $i$. 
If $R_n$ contains $\sigma_{2,3}$ or $\sigma_{2,4}$ then by Claim \ref{claim:obs1}, $K$ is quasipositive, done. 
If $R_{n}=\sigma_{3,4}^{r}$ for some $r>0$ then  
\begin{align} \label{eqn:rn}
K &= \sigma_{1,2}^{-1}R_1 S_1\cdots S_{n-1}\sigma_{3,4}^{r} \approx \sigma_{1,2}^{-1} (\sigma_{3,4}^{r}R_1)S_1\cdots S_{n-1}
\end{align}
which contradicts our minimal complexity assumption. 
\end{proof}

We may now assume that for all $i$, $S_{i}=\sigma_{1,3}^{s_i}$ where $s_i>0$.
Next we determine $R_{i}$.
\begin{claim}
\label{claim:R}
If $K$ is not quasipositive, then $n=2$ and 
$$ K = \sigma_{1,2}^{-1} (\sigma_{2,4}^v\sigma_{2,3}^{w})(\sigma_{1,3}^x) (\sigma_{2,3}^y \sigma_{2,4}^z)$$
for some  $v,y \geq 0, x,z>0$ satisfying $v+w>0$. 

\end{claim}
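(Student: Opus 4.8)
The plan is to analyze the structure of a minimal-complexity standard representative $K=\sigma_{1,2}^{-1}R_1S_1R_2\cdots S_{n-1}R_n$ under the hypothesis that $K$ is \emph{not} quasipositive, using Claims~\ref{claim:obs0}, \ref{claim:obs1} and the form $S_i=\sigma_{1,3}^{s_i}$ already established in Claim~\ref{claim:S}. Since each $R_i$ is a positive word in $\sigma_{2,3},\sigma_{3,4},\sigma_{2,4}$, first I would record the commutation/band relations available among $\{\sigma_{2,3},\sigma_{3,4},\sigma_{2,4},\sigma_{1,3}\}$: in particular $\sigma_{2,3}$ and $\sigma_{1,3}$ do not interact freely, but $\sigma_{3,4}$ commutes with $\sigma_{1,3}$ (disjoint-support / far-commutation type relation), so any $\sigma_{3,4}$ at the left end of an $R_i$ (for $i\ge 2$) can be slid leftward past $S_{i-1}=\sigma_{1,3}^{s_{i-1}}$ and absorbed into $R_{i-1}$, and likewise $\sigma_{2,4}$ can be moved across $\sigma_{1,3}$ at the cost of changing it into an $\sigma_{1,3}$-conjugate, ultimately to be carried around the braid closure into $R_1$ as a $\sigma_{2,4}$ or $\sigma_{3,4}$ (this is exactly the mechanism used in the proofs of Claims~\ref{claim:obs0} and \ref{claim:S}). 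Minimality of complexity then forces each $R_i$ ($2\le i\le n$) to begin with $\sigma_{2,3}$ — otherwise one of these slides produces a standard representative of strictly smaller complexity.

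Next I would use Claim~\ref{claim:obs1} in its full strength: since every $S_i$ contains $\sigma_{1,3}$, if any $R_j$ with $j>i$ contains $\sigma_{2,3}$ then $K$ is quasipositive. Combined with the previous paragraph — $R_i$ begins with $\sigma_{2,3}$ for all $i\ge 2$ — this already forces $n=2$, for if $n\ge 3$ then $R_3$ (which is nonempty by the same absorption argument applied to $R_2$, or directly) begins with $\sigma_{2,3}$, and taking $i=2<3=j$ in Claim~\ref{claim:obs1} makes $K$ quasipositive. So from now on $K=\sigma_{1,2}^{-1}R_1(\sigma_{1,3}^x)R_2$ with $x>0$ (if $x=0$ we would be in the already-handled single-$S$-block degenerate situation, contradicting $n=2$ properly, or more simply $\sigma_{1,3}\in S_1$ is exactly the hypothesis of Claim~\ref{claim:S}), and with $R_2\ne 1$ by Claim~\ref{claim:obs0}.

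It remains to pin down $R_1$ and $R_2$ as the stated monomial-block forms. For $R_2$: by the absorption argument $R_2$ starts with $\sigma_{2,3}$, and I claim any occurrence of $\sigma_{3,4}$ inside $R_2$ can be pushed to the far right end of $R_2$, then cyclically around the closure into $R_1$ — using that $\sigma_{3,4}$ commutes with $\sigma_{2,4}$ is false, so more care is needed here; the correct move is that within the positive word $R_2$ in three band generators one can put $R_2$ into the ``staircase'' form $\sigma_{2,3}^{y}\sigma_{2,4}^{z}$ possibly after reabsorbing $\sigma_{3,4}$-powers, because a $\sigma_{3,4}$ appearing after a $\sigma_{2,3}$ obstructs nothing that a cyclic slide cannot fix — and any $\sigma_{3,4}$ surviving at the end of $R_2$ gets carried (by $\sigma_{3,4}\sigma_{1,3}=\sigma_{1,3}\sigma_{3,4}$) leftward across $S_1$ and the closure into $R_1$, contradicting minimality unless there is none. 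This yields $R_2=\sigma_{2,3}^{y}\sigma_{2,4}^{z}$; then Claim~\ref{claim:obs1} with $i=1$, $j=2$ (now that $S_1$ has $\sigma_{1,3}$) rules out $\sigma_{2,3}\in R_2$ only if we are \emph{not} allowed to use it — but we need $K$ non-quasipositive, so $y$ can still be positive; what Claim~\ref{claim:obs1} actually forces is that we cannot have $\sigma_{1,4}\in S_1$, already known, and no constraint kills $y$; however $z>0$ must hold because $R_2\ne 1$ and if $z=0$ then $R_2=\sigma_{2,3}^y$ with $y>0$, and then $i=1,j=2$ in Claim~\ref{claim:obs1} (the $\sigma_{1,3}/\sigma_{2,3}$ case) makes $K$ quasipositive — contradiction. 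So $z>0$. Symmetrically, analyzing $R_1$ via cyclic slides and the form $S_1=\sigma_{1,3}^x$ gives $R_1=\sigma_{2,4}^{v}\sigma_{2,3}^{w}$ with $v+w>0$ (nonemptiness of $R_1$: if $R_1=1$ a cyclic rotation reduces complexity, analogous to Claim~\ref{claim:obs0}), finishing the claim. The main obstacle I anticipate is the bookkeeping in this last paragraph: justifying precisely which band generators can be slid past $\sigma_{1,3}^x$ and around the closure, and verifying at each step that the move either lowers complexity (contradiction) or converts $K$ into visibly quasipositive form via Claim~\ref{claim:obs1} — the relations among $\sigma_{2,3},\sigma_{3,4},\sigma_{2,4},\sigma_{1,3}$ are not all commutations, so each slide must be checked against the band-generator presentation rather than waved through.
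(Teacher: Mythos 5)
Your argument rests on a relation that is false in the band presentation: $\sigma_{1,3}$ and $\sigma_{3,4}$ share the third strand, so they do \emph{not} commute; the actual relation (with $(i,j,k)=(1,3,4)$) is $\sigma_{3,4}\sigma_{1,3}=\sigma_{1,3}\sigma_{1,4}=\sigma_{1,4}\sigma_{3,4}$. This breaks the ``absorption'' mechanism of sliding a leading $\sigma_{3,4}$ of $R_i$ leftward across $S_{i-1}=\sigma_{1,3}^{s_{i-1}}$ into $R_{i-1}$, and with it your conclusion that each $R_i$ with $i\geq 2$ begins with $\sigma_{2,3}$. Worse, that conclusion is the exact opposite of what Claims~\ref{claim:obs1} and \ref{claim:S} force: since every $S_i$ equals $\sigma_{1,3}^{s_i}$ with $s_i>0$, Claim~\ref{claim:obs1} (with any $i<j$) shows that if $K$ is not quasipositive then $R_j$ contains \emph{no} $\sigma_{2,3}$ for $j\geq 2$. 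Your derivation of $n=2$ is built entirely on the reversed premise, so it collapses. The same reversal resurfaces at the end, where you retain a factor $\sigma_{2,3}^{y}$ in $R_2$ and assert that ``no constraint kills $y$'': Claim~\ref{claim:obs1} with $i=1<2=j$ kills it outright. (The $\sigma_{2,3}^{y}$ in the displayed statement is a typo for $\sigma_{3,4}^{y}$, as the proof and the subsequent flype claim make clear; the letters that can survive in $R_2$ are $\sigma_{3,4}$ and $\sigma_{2,4}$.)

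Beyond the reversed premise, the proposal omits the computations that carry the actual weight of the claim. Minimality of complexity only handles the easy reductions (e.g.\ $R_n\neq 1$, and $R_n$ cannot be a pure power of $\sigma_{3,4}$, since then it could be cycled into $R_1$). The substantive steps are explicit rewritings in the band presentation exhibiting quasipositivity: (a) if some $R_i$ with $i<n$ contains $\sigma_{3,4}$, then using that $R_n$ contains $\sigma_{2,4}$ one rewrites $\sigma_{1,2}^{-1}\beta\sigma_{3,4}\beta'\sigma_{1,3}\beta''\sigma_{2,4}$ as a product of conjugates of positive bands; (b) similar computations exclude $\sigma_{3,4}$ from $R_n$ and $\sigma_{2,3}$ from $R_1$ when $n>2$; and (c) after these exclusions every $R_i=\sigma_{2,4}^{r_i}$ when $n>2$, and a further explicit chain of band relations shows $\sigma_{1,2}^{-1}\sigma_{2,4}^{r_1}\sigma_{1,3}^{s_1}\cdots\sigma_{2,4}^{r_n}$ is quasipositive --- this last computation is precisely how $n=2$ is obtained. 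None of these appear in your outline, and no amount of complexity bookkeeping substitutes for them.
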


\begin{proof}
By Claims \ref{claim:obs1} and \ref{claim:S}, $R_{i}$ does not contain $\sigma_{2,3}$ for $i>1$.

Let us observe that $R_{n}$ must contain $\sigma_{2,4}$ because otherwise, $R_{n}=\sigma_{3,4}^{r}$ and as we have seen in (\ref{eqn:rn}) we get a standard representative of smaller complexity.

For $1 \leq i < n$, assume that $R_i$ contains $\sigma_{3,4}$. Since $R_{n}$ contains $\sigma_{2,4}$, we have
$$ K= \sigma_{1,2}^{-1}\beta \sigma_{3,4} \beta' \sigma_{1,3} \beta'' \sigma_{2,4}$$
for some strongly quasipositive braids $\beta,\beta'$ and $\beta''$. 
Then
\begin{align*}
K &= \sigma_{1,2}^{-1}\beta \sigma_{3,4} \beta' \sigma_{1,3} \beta'' \sigma_{2,4}\\
 & \sim \sigma_{1,2}^{-1}\beta (\sigma_{3,4} \beta' \sigma_{3,4}^{-1}) \sigma_{3,4}\sigma_{1,3}\sigma_{2,4}(\sigma_{2,4}^{-1}\beta'' \sigma_{2,4})\\
& \sim \sigma_{1,2}^{-1}\beta (\sigma_{3,4} \beta' \sigma_{3,4}^{-1}) \sigma_{1,4}\sigma_{3,4}\sigma_{2,4}(\sigma_{2,4}^{-1}\beta'' \sigma_{2,4})\\
&\sim \sigma_{1,2}^{-1}\beta (\sigma_{3,4} \beta' \sigma_{3,4}^{-1}) \sigma_{1,3} \sigma_{1,2} \sigma_{1,4} 
(\sigma_{2,4}^{-1}\beta'' \sigma_{2,4})\\
& =(\sigma_{1,2}^{-1}\beta (\sigma_{3,4} \beta' \sigma_{3,4}^{-1}) 
 \sigma_{1,3} \sigma_{1,2}) \sigma_{1,4}
(\sigma_{2,4}^{-1}\beta'' \sigma_{2,4})
\end{align*}
so $K$ is quasipositive.

Thus for $1\leq i <n$, $R_{i}$ does not contain $\sigma_{3,4}$.  We conclude 
\[
\begin{cases}
 R_{1} = \sigma_{2,4}^{v} \sigma_{2,3}^{w} \mbox{ for } v, w \geq 0 \mbox{ with } v+w>0 \\
R_{i} = \sigma_{2,4}^{r_i} \mbox{ for some } r_i>0, \mbox{ for all } i=2, \ldots, n-1.
\end{cases}
\]
Next we show that $R_{n}=\sigma_{2,4}^{r_n}$ if $n>2$. Since $R_{n}$ does not contain $\sigma_{2,3}$ it is sufficient to show that $R_{n}$ does not contain $\sigma_{3,4}$, either.
We have already proved that $R_{2}=\sigma_{2,4}^{p}$. 
Hence if $R_{n}$ contains $\sigma_{3,4}$ then $K=\sigma_{1,2}^{-1}\beta\sigma_{1,3}\beta' \sigma_{2,4}\beta''\sigma_{3,4}\beta'''$ for some strongly quasipositive braids $\beta,\beta',\beta''$ and $\beta'''$. Then
\begin{align*}
K &= \sigma_{1,2}^{-1}\beta \sigma_{1,3} \beta'\sigma_{2,4} \beta'' \sigma_{3,4} \beta''' \sim(\sigma_{1,2}^{-1}\beta \sigma_{2,3}\sigma_{1,2})(\sigma_{2,3}^{-1} \beta' \sigma_{3,4}\sigma_{2,3})\sigma_{3,4}^{-1}\beta''\sigma_{3,4}\beta'''
\end{align*}
so $K$ is quasipositive, a contradiction.  

Finally we show that $R_1=\sigma_{2,4}^{r_1}$ $(r_1>0)$ if $n>2$. It remains to show that $R_{1}$ does not contain $\sigma_{2,3}$. If $R_{1}$ contain $\sigma_{2,3}$ and $n>2$, then $K=\sigma_{1,2}^{-1} \beta_0 \sigma_{2,3}\beta_1 \sigma_{2,4}\beta_2\sigma_{1,3}\beta_3\sigma_{2,4}\beta_4$ for some strongly quasipositive braids $\beta_0,\ldots,\beta_4$ (Here $\sigma_{2,3},\sigma_{2,4},\sigma_{1,3}$ and the last $\sigma_{2,4}$ come from $R_{1}$, $R_{2}$, $S_{2}$ and $R_{3}$, respectively). Then 
\begin{align*}
K&= \sigma_{1,2}^{-1} \beta_0 \sigma_{2,3}\beta_1 \sigma_{2,4}\beta_2\sigma_{1,3}\beta_3\sigma_{2,4}\beta_4\\
 & \sim\sigma_{1,2}^{-1} \beta_0 (\sigma_{2,3}\beta_1\sigma_{2,3}^{-1})\sigma_{2,3} \sigma_{3,4}\sigma_{2,3}\sigma_{3,4}^{-1}
\beta_{2}\sigma_{2,3}\sigma_{1,2}\sigma_{2,3}^{-1}\beta_{3}
\sigma_{3,4}\sigma_{2,3}\sigma_{3,4}^{-1}\beta_4\\
&\sim \sigma_{1,2}^{-1}\sigma_{3,4}\sigma_{3,4}^{-1}\beta_0 (\sigma_{2,3}\beta_1\sigma_{2,3}^{-1})\sigma_{3,4} \sigma_{2,3}
\beta_{2}\sigma_{2,3}\sigma_{1,2}\sigma_{2,3}^{-1}\beta_{3}
\sigma_{3,4}\sigma_{2,3}\sigma_{3,4}^{-1}\beta_4\\
&\sim \{\sigma_{3,4}[\sigma_{1,2}^{-1}(\sigma_{3,4}^{-1}\beta_0 (\sigma_{2,3}\beta_1\sigma_{2,3}^{-1})\sigma_{3,4}) \sigma_{2,3}
\beta_{2}\sigma_{2,3}\sigma_{1,2}](\sigma_{2,3}^{-1}\beta_{3}
\sigma_{3,4}\sigma_{2,3})\sigma_{3,4}^{-1}\}\beta_4
\end{align*}
so $K$ is quasipositive, a contradiction.

So far, we have obtained that 
\begin{eqnarray*}
(n=2) && R_1 = \sigma_{2,4}^v \sigma_{2,3}^w \mbox{ and } 
R_2 = \sigma_{3,4}^y \sigma_{2,4}^z \mbox{ for some } v, w, y \geq 0, z>0, v+w>0\\
(n\geq 3) && R_i = \sigma_{2,4}^{r_i} \mbox{ for all } i=1,\cdots, n.
\end{eqnarray*}

However, the braid $
K= \sigma_{1,2}^{-1}\sigma_{2,4}^{r_1}\sigma_{1,3}^{s_1}\cdots \sigma_{2,4}^{r_{n-1}}\sigma_{1,3}^{s_{n-1}}\sigma_{2,4}^{r_{n}}$
$(r_{i},s_{i}>0, n>2)$ is quasipositive by the following computation. 
\begin{align*}
K &= \sigma_{1,2}^{-1}\sigma_{2,4}^{r_1}\sigma_{1,3}^{s_1}\cdots\sigma_{2,4}^{r_{n}} & \\
 & \sim \sigma_{1,4}^{r_1}\sigma_{1,2}^{-1}\sigma_{1,3}^{s_1}\cdots\sigma_{2,4}^{r_{n}} & (\because  \sigma_{1,2}^{-1}\sigma_{2,4}\sigma_{1,2} \sim\sigma_{1,4})\\
 & \sim \sigma_{1,4}^{r_1} \sigma_{1,3} \sigma_{2,3}^{-1}\sigma_{1,3}^{s_{1}-1}\cdots\sigma_{2,4}^{r_{n}} & 
(\because  \sigma_{1,2}^{-1}\sigma_{1,3} \sim\sigma_{1,3}\sigma_{2,3}^{-1})\\
 & \sim \sigma_{1,4}^{r_1} \sigma_{1,3} (\sigma_{2,3}^{-1}\sigma_{1,3}^{s_{1}-1}\sigma_{2,3})
\sigma_{2,3}^{-1}\sigma_{2,4}^{r_2}\cdots\sigma_{2,4}^{r_{n}} & \\
&\sim \sigma_{1,4}^{r_1} \sigma_{1,3} \sigma_{1,2}^{s_{1}-1} 
\sigma_{2,4}\sigma_{3,4}^{-1}\sigma_{2,4}^{r_2 -1}\cdots\sigma_{2,4}^{r_{n}} & (\because  \sigma_{2,3}^{-1}\sigma_{2,4}  \sim \sigma_{2,4}\sigma_{3,4}^{-1})\\
& \sim \sigma_{1,4}^{r_1} \sigma_{1,3} \sigma_{1,2}^{s_{1}-1} 
\sigma_{2,4}(\sigma_{3,4}^{-1}\sigma_{2,4}^{r_{2}-1}\sigma_{3,4})\sigma_{3,4}^{-1}\sigma_{1,3}^{s_3}\cdots \sigma_{2,4}^{r_{n}} & \\
& \sim \sigma_{1,4}^{r_1} \sigma_{1,3} \sigma_{1,2}^{s_{1}-1}
\sigma_{2,4}\sigma_{2,3}^{r_{2}-1} 
\sigma_{1,3}\sigma_{1,4}^{-1}\sigma_{1,3}^{s_2-1}\cdots \sigma_{2,4}^{r_{n}} & 
(\because \sigma_{3,4}^{-1}\sigma_{1,3} \sim \sigma_{1,3}\sigma_{1,4}^{-1})\\
& \sim \sigma_{1,4}^{r_1 -1}[\sigma_{1,4}\sigma_{1,3} \sigma_{1,2}^{s_{1}-1}
\sigma_{2,4}\sigma_{2,3}^{r_{2}-1}
\sigma_{1,3}\sigma_{1,4}^{-1}]\sigma_{1,3}^{s_2-1}\sigma_{2,4}^{r_3} \cdots \sigma_{2,4}^{r_{n}} 
\end{align*}

This completes the proof of the claim. 
\end{proof}

\begin{claim}
If $ K = \sigma_{1,2}^{-1} (\sigma_{2,4}^v\sigma_{2,3}^{w})(\sigma_{1,3}^x) (\sigma_{3,4}^y \sigma_{2,4}^z)$ for $v,y \geq 0, x,z>0$, and $v+w>0$, then $K$ admits a negative flype. After the negative flype we get a negatively destabilizable 4-braid. Thus the braid index of $\K$ has $\beta(\K)\leq 3$. 

\end{claim}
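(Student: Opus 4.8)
The plan is to take the braid
$$K = \sigma_{1,2}^{-1} (\sigma_{2,4}^v\sigma_{2,3}^{w})(\sigma_{1,3}^x) (\sigma_{3,4}^y \sigma_{2,4}^z),\qquad v,y\geq 0,\ x,z>0,\ v+w>0,$$
rewrite it in the \emph{standard} generators $\sigma_1,\sigma_2,\sigma_3$ so that the single negative band $\sigma_{1,2}^{-1}$ becomes $\sigma_1^{-1}$, expose the braid in the flype-ready form $\sigma_1^{-1} v\,\sigma_1^{m} w$ with $v,w\in\{\sigma_2^{\pm1},\sigma_3^{\pm1}\}^*$, apply a negative flype, and then observe that the flyped word negatively destabilizes. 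First I would handle the two sub-cases $v>0$ and $v=0<w$ (they differ only in which band from $R_1$ is ``bare''), since in the definition of a flype the letter adjacent to $\sigma_1^{-1}$ on the left is already $\sigma_1^{-1}$ itself and the only $\sigma_1^{\pm1}$-power available to be moved across the braid box comes from conjugating $\sigma_{1,3}^x$, $\sigma_{2,4}^*$, etc.\ into standard form; recall $\sigma_{1,3}=\sigma_2\sigma_1\sigma_2^{-1}$, $\sigma_{2,4}=\sigma_3\sigma_2\sigma_3^{-1}$, $\sigma_{2,3}=\sigma_2$, $\sigma_{3,4}=\sigma_3$, $\sigma_{1,2}=\sigma_1$. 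Substituting these and conjugating appropriately collects all the $\sigma_1$'s into a single block $\sigma_1^{x}$ (or $\sigma_1^{m}$ for the appropriate $m$) sandwiched between words in $\sigma_2,\sigma_3$ only, at which point the braid literally has the shape $\sigma_1^{-1}\,v\,\sigma_1^{m}\,w$ required by the definition of a negative flype ($\varepsilon=-1$).

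Next I would apply the negative flype to turn $\sigma_1^{-1} v\,\sigma_1^{m} w$ into $\sigma_1^{m} v\,\sigma_1^{-1} w$; by the discussion preceding the Example, this preserves the topological link type $\K$ and the self-linking number (though possibly not the transverse type) and preserves the count of positive and negative bands. The point of the flype is that after it, the lone $\sigma_1^{\pm1}$ left in the word is a single $\sigma_1^{-1}$ appearing exactly once — and the key verification is that the $\sigma_2,\sigma_3$-words $v$ and $w$ together with the surrounding structure contain no $\sigma_1$ at all apart from that one, and that moreover no $\sigma_1$ survives in $w$ either, so the $1$st strand of the closed $4$-braid meets exactly one band; checking that this configuration permits a negative (Markov) destabilization removing the first strand is the concrete combinatorial computation. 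After that destabilization we are left with a $3$-braid, which gives $\beta(\K)\leq 3$; one should also note (as the statement says ``negatively destabilizable $4$-braid'' and concludes $\beta(\K)\leq 3$) that the resulting $3$-braid need not be analyzed further for positivity here — that is the business of Proposition~\ref{prop:3braid-evidence} and the $3$-braid case of the main theorem.

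Concretely, the steps in order: (1) rewrite $K$ in standard generators using the band-generator formulas and simplify, pushing all $\sigma_1$-letters into one contiguous power; (2) verify the resulting word has the flype form $\sigma_1^{-1} v \sigma_1^{m} w$, distinguishing the two parity/degenerate sub-cases coming from $v+w>0$; (3) perform the negative flype; (4) check that in the flyped braid the first strand supports a single band, hence a negative destabilization applies, producing a $3$-braid and therefore $\beta(\K)\leq 3$. I expect step (2)--(4) — namely, getting the word honestly into flype normal form and then certifying the destabilization — to be the main obstacle, because the conjugations needed to move $\sigma_3\sigma_2\sigma_3^{-1}$ and $\sigma_2\sigma_1\sigma_2^{-1}$ factors past each other introduce several $\sigma_2^{\pm1},\sigma_3^{\pm1}$ that must be bookkept carefully; the honest way to present it will probably be to exhibit the explicit standard-generator word, exhibit the explicit flyped word, and point to the obvious destabilizing strand, rather than to argue abstractly. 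A useful sanity check throughout: the writhe and the number of negative bands ($=1$) must be invariant under every move, and after destabilization the $3$-braid should still carry exactly one negative band, consistent with $\delta_3(\K)\le 1$.
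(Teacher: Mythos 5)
Your step (1)--(2) is fine: with $\sigma_{2,4}=\sigma_3\sigma_2\sigma_3^{-1}$, $\sigma_{1,3}=\sigma_2\sigma_1\sigma_2^{-1}$, $\sigma_{2,3}=\sigma_2$, $\sigma_{3,4}=\sigma_3$ one indeed gets $K\sim\sigma_1^{-1}\,v'\,\sigma_1^{x}\,w'$ with $v'=\sigma_3\sigma_2^{v}\sigma_3^{-1}\sigma_2^{w+1}$ and $w'=\sigma_2^{-1}\sigma_3^{y+1}\sigma_2^{z}\sigma_3^{-1}$ words in $\sigma_2^{\pm1},\sigma_3^{\pm1}$, so a negative flype on the $\sigma_1$-letters is available. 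The gap is in step (4). After that flype the word is $\sigma_1^{x}\,v'\,\sigma_1^{-1}\,w'$, which still contains the block $\sigma_1^{x}$ with $x>0$: the first strand meets $x+1\geq 2$ bands, so your key assertion that ``the lone $\sigma_1^{\pm1}$ left in the word is a single $\sigma_1^{-1}$'' is false --- a flype moves the power $\sigma_1^{x}$ to the other side of the braid box, it does not remove it. Nor does any other strand of this particular flyped word support a single band in general (the fourth disk of the Bennequin surface carries $v+y+z$ bands, the second and third carry more), so no destabilization, negative or otherwise, is visible, and the argument does not reach $\beta(\K)\leq 3$.

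The paper's proof repairs exactly this point by flyping at the \emph{other} end of the braid. Working entirely in band generators, it first cyclically permutes $\sigma_{1,2}^{-1}$ to the end and uses the band relations (e.g.\ $\sigma_{2,4}\sigma_{1,2}^{-1}=\sigma_{1,4}^{-1}\sigma_{2,4}$ and $\sigma_{1,4}^{-1}=\sigma_{3,4}\sigma_{1,3}^{-1}\sigma_{3,4}^{-1}$) to trade the negative band $\sigma_{1,2}^{-1}$ for a $\sigma_{3,4}^{-1}=\sigma_3^{-1}$, arriving at $\sigma_{2,3}^{v}\sigma_{3,4}^{-1}\sigma_{2,3}^{w}\sigma_{1,3}^{x}\sigma_{3,4}^{y+1}\sigma_{2,3}^{z-1}\sigma_{1,3}^{-1}\sigma_{2,3}$, in which the words between the $\sigma_3$-powers involve only $\sigma_1,\sigma_2$. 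The negative flype is then performed on the $\sigma_3$-letters (legitimate by the $\Delta$-conjugation symmetry noted in the proof of Theorem~\ref{1negative}), and further band slides bring the result to $\sigma_{2,3}^{v}\sigma_{3,4}^{y}\sigma_{2,4}^{w}\sigma_{2,3}^{z-1}\sigma_{1,3}^{-1}\sigma_{3,4}^{x}\sigma_{2,3}$. Here the first strand meets exactly one band, the negative band $\sigma_{1,3}^{-1}$, which is what licenses the negative destabilization --- and the resulting $3$-braid is visibly strongly quasipositive, which is needed for case (iii) of Theorem~\ref{theorem:4SQPtoQP} (your proposal defers this, but it comes for free from the explicit word). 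So ``flype, then destabilize'' is the right idea, but the flype must be the $\sigma_3$-flype after this preliminary rewriting; the $\sigma_1$-flype you propose does not isolate any strand.
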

\begin{proof}
One can modify the braid $K$ as follows. 
\begin{align*}
K&= \sigma_{2,4}^v\sigma_{2,3}^{w}\sigma_{1,3}^x \sigma_{3,4}^y \sigma_{2,4}^z \sigma_1^{-1} \\
&\sim 
\sigma_{2,4}^v \sigma_{2,3}^{w}\sigma_{1,3}^x \sigma_{3,4}^y \sigma_{2,4}^{z-1} \sigma_{1,4}^{-1}  \sigma_{2,4} \\
&\sim 
\sigma_{2,4}^v \sigma_{2,3}^{w} \sigma_{1,3}^x \sigma_{3,4}^y \sigma_{2,4}^{z-1} (\sigma_{3,4} \sigma_{1,3}^{-1} \sigma_{3,4}^{-1})  \sigma_{2,4} \\
&\sim 
\sigma_{2,4}^v  \sigma_{2,3}^{w}\sigma_{1,3}^x \sigma_{3,4}^{y+1} \sigma_{2,3}^{z-1} \sigma_{1,3}^{-1} \sigma_{2,3}  \sigma_{3,4}^{-1} \\
&\sim 
\sigma_{2,3}^v \sigma_{3,4}^{-1} \sigma_{2,3}^{w} \sigma_{1,3}^x \sigma_{3,4}^{y+1} \sigma_{2,3}^{z-1} \sigma_{1,3}^{-1} \sigma_{2,3}   \\
\end{align*}
This closed 4-braid $K$ admits a negative flype. By performing a negative flype, we get another closed braid representative $K'$ given by
\begin{align*}
K'&= 
\sigma_{2,3}^v \sigma_{3,4}^{y+1} \sigma_{2,3}^{w} \sigma_{1,3}^x \sigma_{3,4}^{-1} \sigma_{2,3}^{z-1} \sigma_{1,3}^{-1} \sigma_{2,3} \sim \sigma_{2,3}^v \sigma_{3,4}^y \sigma_{2,4}^{w}\sigma_{2,3}^{z-1}\sigma_{1,3}^{-1}  \sigma_{3,4}^x  \sigma_{2,3}.  
\end{align*} 
The last braid admits a negative destabilization to a strongly quasipositive 3-braid.
\end{proof}

\subsection{Case 2: The negative band is $\sigma_{1,3}^{-1}$} 

Let $K=\sigma_{1,3}^{-1}\beta$. If $\beta$ contains $\sigma_{1,3}$ then $K$ is clearly quasipositive so we may assume that $\beta$ does not contain $\sigma_{1,3}$.

To begin with, we observe that the following relations hold:
\begin{equation}
\label{eqn:A1}
\sigma_{1,3}^{-1}\sigma_{2,3} \sim \sigma_{2,3}\sigma_{1,2}^{-1}, \quad \sigma_{1,3}^{-1}\sigma_{1,4}\sim\sigma_{1,4}\sigma_{3,4}^{-1}
\end{equation}
\begin{equation}
\label{eqn:A2}
\sigma_{1,3}^{-1}\sigma_{1,2}\sim\sigma_{2,3}\sigma_{1,3}^{-1}, \quad \sigma_{1,3}^{-1}\sigma_{3,4}\sim\sigma_{1,4}\sigma_{1,3}^{-1}.
\end{equation}
By these relations we get
\begin{claim}
If $\beta$ does not contain $\sigma_{2,4}$ then $K$ is either 
quasipositive, negatively destabilizable to a strongly quasipositive 3-braid, or reduces to Case 1.
\end{claim}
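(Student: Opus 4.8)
The plan is to treat the word $\beta$ as a positive word in the band generators $\{\sigma_{1,2},\sigma_{2,3},\sigma_{3,4},\sigma_{1,4}\}$ (we have excluded $\sigma_{1,3}$ by hypothesis and $\sigma_{2,4}$ by the hypothesis of the claim), and to push the lone negative band $\sigma_{1,3}^{-1}$ rightward through $\beta$ using the four relations \eqref{eqn:A1}--\eqref{eqn:A2}. Note that each of these relations takes $\sigma_{1,3}^{-1}$ past a single positive band and produces on the left a \emph{positive} band together with a negative band on the right; the negative band that emerges is always one of $\sigma_{1,2}^{-1},\sigma_{3,4}^{-1},\sigma_{1,3}^{-1}$. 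The key point is that passing $\sigma_{1,3}^{-1}$ past $\sigma_{2,3}$ or past $\sigma_{1,4}$ (the relations in \eqref{eqn:A1}) \emph{changes the conjugacy class of the negative generator}: it becomes $\sigma_{1,2}^{-1}$ or $\sigma_{3,4}^{-1}$, respectively. So I would first scan $\beta$ from left to right for the first occurrence of a letter that is \emph{not} $\sigma_{1,2}$.

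\textbf{Case analysis on the first "non-$\sigma_{1,2}$" letter.} If $\beta = \sigma_{1,2}^{k}$ for some $k\ge 0$, then $K = \sigma_{1,3}^{-1}\sigma_{1,2}^{k} \sim \sigma_{2,3}^{k}\sigma_{1,3}^{-1}$ by repeated use of the first relation in \eqref{eqn:A2}, which after a cyclic permutation is $\sigma_{1,3}^{-1}\sigma_{2,3}^{k}$, and this is negatively destabilizable (the strand $3$ carries only the band $\sigma_{1,3}^{\pm1}$... more carefully: the closure of $\sigma_{1,3}^{-1}\sigma_{2,3}^{k}$ in $B_4$ is easily seen to reduce). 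Otherwise write $\beta = \sigma_{1,2}^{k}\, X\, \gamma$ where $X\in\{\sigma_{2,3},\sigma_{3,4},\sigma_{1,4}\}$ is the first non-$\sigma_{1,2}$ letter and $\gamma$ is the rest. Commute $\sigma_{1,3}^{-1}$ through $\sigma_{1,2}^{k}$ as above to get $K\sim \sigma_{2,3}^{k}\,\sigma_{1,3}^{-1} X\,\gamma$, and now apply the appropriate relation to $\sigma_{1,3}^{-1}X$:
\begin{itemize}
\item[(a)] If $X = \sigma_{3,4}$: by \eqref{eqn:A2}, $\sigma_{1,3}^{-1}\sigma_{3,4}\sim\sigma_{1,4}\sigma_{1,3}^{-1}$, so the negative band remains $\sigma_{1,3}^{-1}$ but has now absorbed a $\sigma_{3,4}$ into a positive $\sigma_{1,4}$; we have strictly decreased the number of letters to the right of $\sigma_{1,3}^{-1}$ that are not $\sigma_{1,2}$, or we loop back and continue.
\item[(b)] If $X = \sigma_{2,3}$: by \eqref{eqn:A1}, $\sigma_{1,3}^{-1}\sigma_{2,3}\sim\sigma_{2,3}\sigma_{1,2}^{-1}$. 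Now the negative band is $\sigma_{1,2}^{-1}$ and the entire remaining word is positive in $\{\sigma_{1,2},\sigma_{2,3},\sigma_{3,4},\sigma_{1,4}\}$ — i.e. $K$ has become $(\text{positive word})\,\sigma_{1,2}^{-1}\,(\text{positive word})$, which up to conjugation is a standard representative of Case~1. This reduces to Case~1.
\item[(c)] If $X = \sigma_{1,4}$: by \eqref{eqn:A1}, $\sigma_{1,3}^{-1}\sigma_{1,4}\sim\sigma_{1,4}\sigma_{3,4}^{-1}$. Now the negative band is $\sigma_{3,4}^{-1}$. This is the mirror situation of Case~1 under the symmetry $\sigma_{i,j}\mapsto\sigma_{5-j,5-i}$ (equivalently conjugation by the half-twist $\Delta$, cf. the proof of Theorem~\ref{1negative}), which sends $\sigma_{3,4}\mapsto\sigma_{1,2}$; so after conjugating by $\Delta$ we are again in Case~1.
\end{itemize}

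\textbf{Termination and the main obstacle.} Running (a) repeatedly, each step replaces a $\sigma_{3,4}$ by a $\sigma_{1,4}$ sitting to the \emph{left} of $\sigma_{1,3}^{-1}$; since the number of letters strictly to the right of $\sigma_{1,3}^{-1}$ strictly decreases, after finitely many steps either we exhaust the word (landing in the $\sigma_{1,3}^{-1}\sigma_{2,3}^{k}\cdots$ destabilizable shape, using also that any $\sigma_{1,4}$ picked up can be commuted: $\sigma_{1,4}$ and $\sigma_{2,3}$ commute since their index pairs are nested-disjoint? — this needs care), or we hit an $X\in\{\sigma_{2,3},\sigma_{1,4}\}$ and land in Case~1 via (b) or (c). The main obstacle I anticipate is bookkeeping: after commuting $\sigma_{1,3}^{-1}$ past letters and spitting out positive bands $\sigma_{2,3},\sigma_{1,4}$ on the left, one must verify that the accumulated left factor is still a positive word in the \emph{allowed} generators so that "reduces to Case~1" genuinely applies (Case~1 allows $\sigma_{1,3}, \sigma_{1,4}$ in the $S_i$ blocks and $\sigma_{2,3},\sigma_{3,4},\sigma_{2,4}$ in the $R_i$ blocks, so this should be fine), and that in the terminal subcase the resulting $4$-braid really does admit a negative destabilization — this amounts to checking that some strand meets exactly one band, once, negatively, which one reads off from the reduced word $\sigma_{1,3}^{-1}\sigma_{2,3}^{k}$ and its conjugates directly. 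I would handle the destabilization claim by a short explicit computation rather than a general principle.
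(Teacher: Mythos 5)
Your argument is essentially the paper's: you push $\sigma_{1,3}^{-1}$ rightward with the relations \eqref{eqn:A2} past $\sigma_{1,2}$ and $\sigma_{3,4}$ until it meets a $\sigma_{2,3}$ or $\sigma_{1,4}$, at which point \eqref{eqn:A1} converts the negative band to $\sigma_{1,2}^{-1}$ or $\sigma_{3,4}^{-1}$ (Case 1, up to $\Delta$-conjugation), and otherwise $\beta\in\langle\sigma_{1,2},\sigma_{3,4}\rangle$ and the braid destabilizes negatively to a strongly quasipositive 3-braid. The bookkeeping worries you flag (commuting $\sigma_{1,4}$ with $\sigma_{2,3}$, verifying the destabilization) all resolve by the short computations you indicate, so the proof is correct and matches the paper's.
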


\begin{proof}
Assume that $\beta$ does not contain $\sigma_{2,4}$. By  (\ref{eqn:A1}) and (\ref{eqn:A2}), if $\beta$ contains either $\sigma_{2,3}$ or $\sigma_{1,4}$, then we can write $K$ so that it contains a negative band $\sigma_{1,2}^{-1}$ or $\sigma_{3,4}^{-1}$. Indeed, we may slide any $\sigma_{1,2}$ or $\sigma_{3,4}$ left of $\sigma_{1,3}^{-1}$ using (\ref{eqn:A2}), and then the first letter of $\beta$ may be assumed to be either $\sigma_{2,3}$ or $\sigma_{1,4}$, in which case we apply (\ref{eqn:A1}).

Now assume that $\beta$ does not contain $\sigma_{2,4}$, $\sigma_{2,3}$ or $\sigma_{1,4}$; that is,  $\beta=\sigma_{1,2}^{x}\sigma_{3,4}^{y}$ for some $x,y\geq 0$. 
Then
$$K= \sigma_{1,3}^{-1}\sigma_{1,2}^{x}\sigma_{3,4}^{y} \sim \sigma_{2,3}^{x}\sigma_{1,3}^{-1}\sigma_{3,4}^{y}$$
admits a negative destabilization to a SQP 3-braid.
\end{proof}

Thus we may write $K=\sigma_{1,3}^{-1}\beta$ such that $\beta$ is a strongly quasipositive braid that contains $\sigma_{2,4}$.
 Moreover, by applying  (\ref{eqn:A1}) and (\ref{eqn:A2}) we may actually write
$$
K= \sigma_{1,3}^{-1}\sigma_{2,4}\beta
$$
for some strongly quasipositive braid $\beta$.
As in Case 1 let us write $K$ as
$$
K= \sigma_{1,3}^{-1}\sigma_{2,4}R_1 S_1 \cdots R_{n}
$$
where
\begin{itemize}
\item
$R_i$ is a SQP word in the letters $\sigma_{2,3}, \sigma_{3,4}$ and $\sigma_{2,4}$. ($R_i$ needn't use all of these letters.)
\item
$S_i$ is a SQP word in the letters $\sigma_{1,2}$ and $\sigma_{1,4}$. ($S_i$ needn't use both of these letters.)
\end{itemize}
We call such a braid representative \emph{standard}, and the complexity of standard representative is defined similarly to that of Case 1.

Let us take a standard representative of $K$ that attains the minimum complexity among all the standard representatives.

\begin{claim}
\label{claim:obscase2}
If $R_i$ contains $\sigma_{3,4}$ for some $i>1$, then $K$ is quasipositive.
\end{claim}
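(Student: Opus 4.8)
\textbf{Proof proposal for Claim~\ref{claim:obscase2}.}

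The plan is to mimic the structure of the argument used in Case~1 (Claim~\ref{claim:R}): assuming some $R_i$ with $i>1$ contains $\sigma_{3,4}$, I will isolate the relevant letters and perform explicit conjugations and band-slide relations to produce a quasipositive word. First I would record which letters we may assume occur where: we have $K = \sigma_{1,3}^{-1}\sigma_{2,4}R_1 S_1 R_2 \cdots$, where the single negative band is $\sigma_{1,3}^{-1}$ and everything else is positive. Suppose $i>1$ is the smallest index with $\sigma_{3,4}\in R_i$. Then we can write
\[
K = \sigma_{1,3}^{-1}\sigma_{2,4}\,\gamma\,\sigma_{3,4}\,\gamma'
\]
for some strongly quasipositive words $\gamma,\gamma'$, where $\gamma$ begins with $R_1 S_1 \cdots$ and in particular $\gamma$ contains at least one $S_{i'}$ (a positive word in $\sigma_{1,2},\sigma_{1,4}$) sitting between the displayed $\sigma_{2,4}$ and the displayed $\sigma_{3,4}$. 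The key is that the relations (\ref{eqn:A1}) and (\ref{eqn:A2}) let me commute the lone $\sigma_{1,3}^{-1}$ rightward past $\sigma_{1,2}, \sigma_{2,3}, \sigma_{3,4}, \sigma_{1,4}$ at the cost of changing which negative band appears; combined with the band-generator relations $\sigma_{2,3}\sigma_{1,2}\sigma_{2,3}^{-1}\sim\sigma_{1,3}$, $\sigma_{3,4}\sigma_{1,3}^{-1}\sigma_{3,4}\sim$ (other bands), etc., I expect to be able to "absorb" the negative $\sigma_{1,3}^{-1}$ into a conjugate of $\sigma_{2,4}$ or cancel it against a positive $\sigma_{1,3}$ created along the way.

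More concretely, the step I would try first: use (\ref{eqn:A2}) to slide $\sigma_{1,3}^{-1}$ all the way to just before the distinguished $\sigma_{3,4}$, conjugating the intervening positive word; since sliding past $\sigma_{1,2}$ produces $\sigma_{2,3}$, past $\sigma_{3,4}$ produces $\sigma_{1,4}$, past $\sigma_{2,3}$ produces $\sigma_{2,3}$ with a trailing $\sigma_{1,2}^{-1}$ (from (\ref{eqn:A1})), and past $\sigma_{1,4}$ produces $\sigma_{1,4}$ with a trailing $\sigma_{3,4}^{-1}$, I want to route the slide through only $\sigma_{1,2}$'s and $\sigma_{3,4}$'s so no new negative letters are created. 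The role of $\sigma_{2,4}$ is the obstruction: $\sigma_{1,3}^{-1}$ does not slide cleanly past $\sigma_{2,4}$, which is exactly why the previous claims already reduced us to $K=\sigma_{1,3}^{-1}\sigma_{2,4}\cdots$ with $\sigma_{2,4}$ placed first. So instead I expect the right move is the opposite: bring the distinguished $\sigma_{3,4}$ leftward toward $\sigma_{1,3}^{-1}\sigma_{2,4}$, use $\sigma_{1,3}^{-1}\sigma_{2,4}$-type identities (note $\sigma_{1,3}^{-1}\sigma_{2,4}$ is not one of the listed relations but $\sigma_{1,3}^{-1}\sigma_{1,4}\sim\sigma_{1,4}\sigma_{3,4}^{-1}$ and $\sigma_{2,3}\sigma_{3,4}\sigma_{2,3}^{-1}\sim\sigma_{2,4}$ are), and conjugate the $S_{i'}$-block (containing $\sigma_{1,2}$ or $\sigma_{1,4}$) by the appropriate band so that $\sigma_{1,3}^{-1}$ gets conjugated to a positive band. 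This is precisely the pattern executed in the $\sigma_{3,4}\in R_i$ sub-case of Claim~\ref{claim:R}, where the computation
\[
\sigma_{1,2}^{-1}\beta\sigma_{3,4}\beta'\sigma_{1,3}\beta''\sigma_{2,4}
\sim (\sigma_{1,2}^{-1}\beta(\sigma_{3,4}\beta'\sigma_{3,4}^{-1})\sigma_{1,3}\sigma_{1,2})\sigma_{1,4}(\sigma_{2,4}^{-1}\beta''\sigma_{2,4})
\]
turned an apparently non-quasipositive word quasipositive by exploiting $\sigma_{3,4}\sigma_{1,3}\sigma_{2,4}\sim\sigma_{1,4}\sigma_{3,4}\sigma_{2,4}\sim\sigma_{1,3}\sigma_{1,2}\sigma_{1,4}$; here I would use the mirror identity obtained by reversing roles of the strand indices under the symmetry $\sigma_{i,j}\mapsto\sigma_{n+1-j,n+1-i}$ (which on $4$-braids swaps $\sigma_{1,2}\leftrightarrow\sigma_{3,4}$, $\sigma_{1,3}\leftrightarrow\sigma_{2,4}$, fixes $\sigma_{1,4}$ and $\sigma_{2,3}$), carrying Case~1's identity to a statement about $\sigma_{1,3}^{-1}$.

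The main obstacle I anticipate is bookkeeping: making sure that after all the slides and conjugations the word is genuinely a product of \emph{positive} band generators (or conjugates of standard generators) — in particular that the $\sigma_{3,4}^{-1}$, $\sigma_{1,2}^{-1}$, $\sigma_{3,4}^{-1}$ terms introduced by (\ref{eqn:A1}) are always immediately reabsorbed as part of a conjugation $\sigma_{a,b}(\cdots)\sigma_{a,b}^{-1}$ of a positive subword, never left dangling. A secondary subtlety is the case $\sigma_{3,4}\in R_i$ but no $S_{i'}$ lies strictly between $\sigma_{2,4}$ and that $\sigma_{3,4}$ (so the $\sigma_{3,4}$ is "close" to the front); there one may need the minimality of the standard representative, as in Claims~\ref{claim:obs0} and~\ref{claim:S}, to rule it out or to rewrite $K$ with smaller complexity. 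But since the symmetry $\sigma_{i,j}\mapsto\sigma_{5-j,5-i}$ conjugates Case~2's setup ($\sigma_{1,3}^{-1}$, $R_i\in\{\sigma_{2,3},\sigma_{3,4},\sigma_{2,4}\}$, $S_i\in\{\sigma_{1,2},\sigma_{1,4}\}$) exactly to a Case~1-type setup, the cleanest route is probably to invoke that symmetry and quote the corresponding step of Claim~\ref{claim:R} verbatim; the verification that the symmetry sends ``$R_i$ contains $\sigma_{3,4}$'' to ``$R_i$ contains $\sigma_{2,3}$'' (a case handled inside Claim~\ref{claim:R}) is the only thing that needs checking.
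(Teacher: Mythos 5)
Your overall instinct is right and matches the paper's strategy at a high level: the proof does hinge on a letter of the $S$-block sitting between the leading $\sigma_{1,3}^{-1}\sigma_{2,4}$ and the distinguished $\sigma_{3,4}$. Concretely, since $i>1$ there is a nonempty $S_{i-1}$ immediately preceding $R_i$, and the paper takes its \emph{last} letter $\sigma_{1,d}$ ($d\in\{2,4\}$), writes $K=\sigma_{1,3}^{-1}\sigma_{2,4}\beta'\sigma_{1,d}\beta''\sigma_{3,4}\beta'''$, and in each of the two cases performs an explicit rewriting (for $d=4$, replace $\sigma_{1,4}$ by $\sigma_{3,4}\sigma_{1,3}\sigma_{3,4}^{-1}$ so that the created $\sigma_{1,3}$ closes up the conjugation $\sigma_{1,3}^{-1}(\cdots)\sigma_{1,3}$; for $d=2$, replace $\sigma_{1,3}^{-1}$ by $\sigma_{2,3}\sigma_{1,2}^{-1}\sigma_{2,3}^{-1}$ and $\sigma_{2,4}$ by $\sigma_{3,4}\sigma_{2,3}\sigma_{3,4}^{-1}$ and regroup, pairing $\sigma_{1,2}^{-1}$ with the $\sigma_{1,2}$ from $S_{i-1}$ and the $\sigma_{3,4}^{-1}$'s with the distinguished $\sigma_{3,4}$). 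Your write-up never carries out any such computation -- it is a plan punctuated by ``I expect'' and ``I anticipate'' -- so on its own it does not establish the claim. Also, the side worry about there being no $S$-block strictly between $\sigma_{2,4}$ and the chosen $\sigma_{3,4}$ does not arise: $i>1$ forces $S_{i-1}$ to be there, which is exactly what the paper exploits.

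The one concrete mechanism you do commit to -- reducing to Case 1 via the involution $\sigma_{i,j}\mapsto\sigma_{5-j,5-i}$ and quoting the corresponding step of Claim~\ref{claim:R} verbatim -- does not work. That involution sends $\sigma_{1,3}^{-1}$ to $\sigma_{2,4}^{-1}$, which is still a non-adjacent band (a Case-2-type negative letter), \emph{not} to $\sigma_{1,2}^{-1}$; so it maps Case 2 to an instance of Case 2 up to conjugacy, not to Case 1. Likewise it sends the distinguished $\sigma_{3,4}$ to $\sigma_{1,2}$, not to $\sigma_{2,3}$ as you assert, and it sends the Case-2 alphabets $\{\sigma_{2,3},\sigma_{3,4},\sigma_{2,4}\}$ and $\{\sigma_{1,2},\sigma_{1,4}\}$ to $\{\sigma_{2,3},\sigma_{1,2},\sigma_{1,3}\}$ and $\{\sigma_{3,4},\sigma_{1,4}\}$, which is not the standard form of Case 1. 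Hence there is no sub-case of Claim~\ref{claim:R} that this symmetry lets you cite, and the gap must be filled by the direct two-case computation described above.
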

\begin{proof}
Assume $R_i$ contains $\sigma_{3,4}$ for some $i>1$. 
Let $\sigma_{1,d}$ $(d=2,4)$ be the last letter of the strongly quasipositive braid  $S_{i-1}$, so that $S_{i-1}=S'\sigma_{1,d}$.
If $R_i$ contains $\sigma_{3,4}$ then 
$K=\sigma_{1,3}^{-1}\sigma_{2,4}\beta' \sigma_{1,d}\beta''\sigma_{3,4}\beta'''$
for some strongly quasipositive braids $\beta',\beta''$ and $\beta'''$.

If $d=2$, by replacing $\sigma_{1,3}^{-1}$ and $\sigma_{2,4}$ with $\sigma_{2,3}\sigma_{1,2}^{-1}\sigma_{2,3}^{-1}$ and $\sigma_{3,4}\sigma_{2,3}\sigma_{3,4}^{-1}$ respectively we get
\begin{align*}
K&= (\sigma_{2,3}\sigma_{1,2}^{-1}\sigma_{2,3}^{-1})
(\sigma_{3,4}\sigma_{2,3}\sigma_{3,4}^{-1})\beta' \sigma_{1,2}\beta''\sigma_{3,4}\beta'''\\
&\sim \sigma_{2,3}\sigma_{1,2}^{-1}(\sigma_{3,4}^{-1}\sigma_{3,4})\sigma_{2,3}^{-1}\sigma_{3,4}
\sigma_{2,3}\sigma_{3,4}^{-1}\beta' \sigma_{1,2}\beta''\sigma_{3,4}\beta'''\\
&\sim \sigma_{2,3}\{\sigma_{3,4}^{-1}[\sigma_{1,2}^{-1}(\sigma_{3,4}\sigma_{2,3}^{-1}\sigma_{3,4}\sigma_{2,3}\sigma_{3,4}^{-1})\beta' \sigma_{1,2}]\beta''\sigma_{3,4}\}\beta'''.
\end{align*}
This shows $K$ is quasipositive.

Similarly, if $d=4$ then by replacing $\sigma_{1,4}$ with $\sigma_{3,4}\sigma_{1,3}\sigma_{3,4}^{-1}$
\begin{align*}
K&= \sigma_{1,3}^{-1}\sigma_{2,4}\beta' (\sigma_{3,4}\sigma_{1,3}\sigma_{3,4}^{-1}) \beta''\sigma_{3,4}\beta''' \sim (\sigma_{1,3}^{-1}\sigma_{2,4}\beta' \sigma_{3,4}\sigma_{1,3})(\sigma_{3,4}^{-1}\beta''\sigma_{3,4})\beta'''.
\end{align*}
This shows $K$ is quasipositive.
\end{proof}

\begin{claim}
Assume that $S_{i}$ contains $\sigma_{1,2}$ for some $i>0$ and that $R_{1}$ contains $\sigma_{3,4}$. Then $K$ is quasipositive.
\end{claim}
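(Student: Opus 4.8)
The plan is to mimic the structure of the proof of Claim~\ref{claim:obscase2}: locate a negative band that we can ``move past'' a suitable strongly quasipositive band by a braid-group relation so that the negative band cancels or gets absorbed. Here the hypotheses give us a $\sigma_{1,2}$ inside some $S_i$ (with $i>0$) and a $\sigma_{3,4}$ inside $R_1$. Since $R_1$ precedes $S_1$, which precedes (or equals) $S_i$, we may write
$$K = \sigma_{1,3}^{-1}\sigma_{2,4}\,\beta'\,\sigma_{3,4}\,\beta''\,\sigma_{1,2}\,\beta'''$$
for strongly quasipositive braids $\beta',\beta'',\beta'''$, where the displayed $\sigma_{3,4}$ comes from $R_1$ and the displayed $\sigma_{1,2}$ from $S_i$.

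First I would deal with the two leading negative/mixed letters $\sigma_{1,3}^{-1}\sigma_{2,4}$. Using the relations (\ref{eqn:A1}) and (\ref{eqn:A2}) and the band-generator relations among $\{\sigma_{1,3},\sigma_{2,4},\sigma_{1,2},\sigma_{3,4}\}$, I would rewrite $\sigma_{1,3}^{-1}\sigma_{2,4}$ in a form that interacts cleanly with the $\sigma_{3,4}$ to its right; concretely, one expects $\sigma_{1,3}^{-1}\sigma_{2,4} \sim \sigma_{2,4}\sigma_{1,3}^{-1}$ (these are disjoint bands since $\{1,3\}$ and $\{2,4\}$ are ``linked'' rather than nested — I would double-check this commutation or replace it by the correct relation, e.g.\ conjugating one of them by $\sigma_{3,4}$ or $\sigma_{2,3}$ as in Claim~\ref{claim:obscase2}). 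Then I would push the negative band $\sigma_{1,3}^{-1}$ rightward through $\beta'$ at the cost of conjugating $\beta'$, until it meets the $\sigma_{3,4}$: by the second relation of (\ref{eqn:A2}), $\sigma_{1,3}^{-1}\sigma_{3,4}\sim\sigma_{1,4}\sigma_{1,3}^{-1}$, which converts the negative $\sigma_{1,3}^{-1}$ into a positive $\sigma_{1,4}$ followed by a new negative $\sigma_{1,3}^{-1}$ — net effect: the negative band has moved past a positive band and we have gained a positive band.

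Now the negative $\sigma_{1,3}^{-1}$ sits to the left of $\beta''\sigma_{1,2}\beta'''$. I would again slide it rightward through $\beta''$ (conjugating) until it reaches the $\sigma_{1,2}$, and apply the first relation of (\ref{eqn:A2}), $\sigma_{1,3}^{-1}\sigma_{1,2}\sim\sigma_{2,3}\sigma_{1,3}^{-1}$, again trading the negative band past a positive band. The key point is that after doing this we should be able to conjugate the remaining lone $\sigma_{1,3}^{-1}$ to the very end of the word and then either (a) it cancels against some $\sigma_{1,3}$ produced along the way, or (b) more likely, the braid has been rewritten entirely in positive conjugates of standard generators — i.e.\ in a form $u^{-1}\sigma_k u$ with everything positive — exhibiting $K$ as quasipositive, exactly in the style of the final displays in Claims~\ref{claim:obs1}, \ref{claim:obscase2}. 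I would organize the bookkeeping so that the $\sigma_{1,3}^{-1}$ is always matched with a conjugating pair: e.g.\ isolate a factor of the form $\sigma_{1,3}^{-1}(\text{positive})\sigma_{1,3}$ by using the band relation $\sigma_{1,3}=\sigma_{2,3}\sigma_{1,2}\sigma_{2,3}^{-1}$ or $\sigma_{1,3}=\sigma_{3,4}^{-1}\sigma_{1,4}\sigma_{3,4}$ to supply the needed $\sigma_{1,3}$ from the positive letters already present (the $\sigma_{3,4}$ from $R_1$ and the $\sigma_{1,2}$ from $S_i$ are precisely what guarantee these are available).

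\textbf{Main obstacle.} The delicate part is ensuring that when we slide $\sigma_{1,3}^{-1}$ through the positive words $\beta'$ and $\beta''$, every letter it crosses can indeed be crossed using only relations of the form (\ref{eqn:A1})--(\ref{eqn:A2}) or honest commutations, without introducing additional negative bands that we cannot later absorb — in particular the interaction of $\sigma_{1,3}^{-1}$ with $\sigma_{2,4}$ (a ``linked'' pair, not covered verbatim by the listed relations) needs its own small computation, and we must verify that the $\sigma_{1,2}$ from $S_i$ really can be brought adjacent to the migrating $\sigma_{1,3}^{-1}$ on the correct side given that $i$ could be $1$ or larger and there may be intervening $R$-words (whose $\sigma_{3,4}$ letters we've already seen, via Claim~\ref{claim:obscase2}, can be assumed absent for $i>1$, which should simplify the routing). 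I expect that once the commutation $\sigma_{1,3}^{-1}\sigma_{2,4}\sim\sigma_{2,4}\sigma_{1,3}^{-1}$ (or its corrected conjugated form) is pinned down, the rest is a routine, if slightly lengthy, rewriting along the lines already displayed in this section.
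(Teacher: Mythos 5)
Your starting decomposition $K=\sigma_{1,3}^{-1}\sigma_{2,4}\beta'\sigma_{3,4}\beta''\sigma_{1,2}\beta'''$ is exactly the one the paper uses, and you correctly identify the $\sigma_{3,4}$ from $R_1$ and the $\sigma_{1,2}$ from $S_i$ as the resources that must be spent. But the mechanism you propose for actually eliminating the negative band does not close. First, $\sigma_{1,3}$ and $\sigma_{2,4}$ are linked bands ($1<2<3<4$), so they do not commute --- the commutation you flag as needing a check is in fact false. More seriously, the relations (\ref{eqn:A1})--(\ref{eqn:A2}) only transport the letter $\sigma_{1,3}^{-1}$ through the word while preserving the count of negative bands; after all your slides you still hold exactly one $\sigma_{1,3}^{-1}$ and, by the standing assumption of Case 2, no $\sigma_{1,3}$ anywhere (none is ``produced along the way''), so your option (a) is unavailable, and option (b) does not follow from anything you have done. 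Your fallback of manufacturing a $\sigma_{1,3}$ from $\sigma_{1,2}=\sigma_{2,3}^{-1}\sigma_{1,3}\sigma_{2,3}$ also fails as stated: the extracted $\sigma_{1,3}$ arrives flanked by a $\sigma_{2,3}^{-1}$ that lands \emph{inside} the would-be conjugating pair $\sigma_{1,3}^{-1}(\cdots)\sigma_{1,3}$, so the inner word is not positive and quasipositivity is not exhibited.

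The missing idea is to move the negativity off of $\sigma_{1,3}$ entirely rather than sliding $\sigma_{1,3}^{-1}$ around. The paper substitutes $\sigma_{1,3}^{-1}=\sigma_{2,3}\sigma_{1,2}^{-1}\sigma_{2,3}^{-1}$, so that the negative band becomes a conjugated $\sigma_{1,2}^{-1}$, which opens a conjugation that the $\sigma_{1,2}$ from $S_i$ closes; and it substitutes $\sigma_{2,4}=\sigma_{3,4}\sigma_{2,3}\sigma_{3,4}^{-1}$, so that the stray $\sigma_{3,4}^{-1}$ becomes part of the conjugation $\sigma_{3,4}^{-1}\beta'\sigma_{3,4}$ closed by the $\sigma_{3,4}$ from $R_1$. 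This yields
$K\sim\sigma_{2,3}\bigl[\sigma_{1,2}^{-1}(\sigma_{2,3}^{-1}\sigma_{3,4}\sigma_{2,3})(\sigma_{3,4}^{-1}\beta'\sigma_{3,4})\beta''\sigma_{1,2}\bigr]\beta'''$,
in which every factor is visibly a conjugate of a positive band, hence $K$ is quasipositive. Without some substitution of this kind your rewriting never pairs the lone negative letter with a positive partner, so the proof as proposed has a genuine gap.
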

\begin{proof}
Since $S_i$ contains $\sigma_{1,2}$ and $R_1$ contains $\sigma_{3,4}$, we may write $K$ as  $K=\sigma_{1,3}^{-1}\sigma_{2,4}\beta'\sigma_{3,4}\beta''\sigma_{1,2}\beta'''$
for some strongly quasipositive braids $\beta',\beta''$ and $\beta'''$.
By replacing $\sigma_{1,3}^{-1}$ and $\sigma_{2,4}$ with $\sigma_{2,3}\sigma_{1,2}^{-1}\sigma_{2,3}^{-1}$ and $\sigma_{3,4}\sigma_{2,3}\sigma_{3,4}^{-1}$ respectively we get
\begin{align*}
K&= (\sigma_{2,3}\sigma_{1,2}^{-1}\sigma_{2,3}^{-1})(\sigma_{3,4}\sigma_{2,3}\sigma_{3,4}^{-1})
\beta' \sigma_{3,4}\beta''\sigma_{1,2}\beta'''\\
&\sim \sigma_{2,3}[\sigma_{1,2}^{-1}(\sigma_{2,3}^{-1}\sigma_{3,4}\sigma_{2,3})
(\sigma_{3,4}^{-1}\beta' \sigma_{3,4})\beta''\sigma_{1,2}]\beta'''.
\end{align*}
This shows $K$ is quasipositive.
\end{proof}

Thus, we conclude if $S_{i}$ contains $\sigma_{1,2}$ for some $i>0$ then either $K$ is quasipositive, or, $R_i$ does not contain $\sigma_{3,4}$ for every $i$.

\begin{claim}
If $S_i$ contains $\sigma_{1,2}$ for some $i>0$ and $R_{i}$ does not contain $\sigma_{3,4}$ for every $i$, then $K$ is either negatively destabilzable or quasipositive.
\end{claim}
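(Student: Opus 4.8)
The plan is to run the same rewriting argument as in the preceding claims of Case~2, driven this time by the guaranteed positive band $\sigma_{1,2}$ and controlled by the minimal-complexity hypothesis. Under the stated hypotheses every $R_i$ is a positive word in $\{\sigma_{2,3},\sigma_{2,4}\}$, so $K=\sigma_{1,3}^{-1}\sigma_{2,4}R_1S_1\cdots R_n$ contains no occurrence of $\sigma_{3,4}^{\pm 1}$ and exactly one occurrence of $\sigma_{1,3}^{\pm 1}$, namely the leading $\sigma_{1,3}^{-1}$.

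I would first dispose of the sub-case in which, in addition, no $R_i$ contains $\sigma_{2,3}$. Then the positive part $\sigma_{2,4}R_1S_1\cdots R_n$ lies in the subgroup of $B_4$ generated by $\{\sigma_{1,2},\sigma_{2,4},\sigma_{1,4}\}$, which involves only strands $1,2,4$, and the unique band meeting strand~$3$ in the associated Bennequin surface is $\sigma_{1,3}^{-1}$. Collapsing the disk on strand~$3$ across this band is a negative destabilization, and under the relabelling of strands $(1,2,4)\mapsto(1,2,3)$ the generators $\sigma_{1,2},\sigma_{2,4},\sigma_{1,4}$ become $\sigma_{1,2},\sigma_{2,3},\sigma_{1,3}$; hence $K$ negatively destabilizes to a strongly quasipositive $3$-braid.

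In the remaining sub-case some $R_i$ contains $\sigma_{2,3}$. I would locate an occurrence of $\sigma_{1,2}$ (inside some $S_j$) and slide it leftward toward the negative band, using $\sigma_{2,4}\sigma_{1,2}\sim\sigma_{1,2}\sigma_{1,4}$ to pass the $\sigma_{2,4}$'s (turning them into $\sigma_{1,4}$'s) and $\sigma_{2,3}\sigma_{1,2}\sim\sigma_{1,2}\sigma_{1,3}$ to pass the $\sigma_{2,3}$'s (turning them into positive $\sigma_{1,3}$'s). Once $\sigma_{1,2}$ is adjacent to the leading $\sigma_{1,3}^{-1}$, the relation $\sigma_{1,3}^{-1}\sigma_{1,2}\sim\sigma_{2,3}\sigma_{1,3}^{-1}$ from (\ref{eqn:A2}) absorbs it and produces a new leading $\sigma_{2,3}$; cyclically moving this $\sigma_{2,3}$ to the far right lengthens $R_n$, which contradicts minimality of the complexity unless the rewriting has already displayed $K$ as quasipositive (this happens whenever one of the patterns handled by Claim~\ref{claim:obscase2} and the earlier Case~2 claims is created) or has produced a word that negatively destabilizes. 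The second outcome arises, for instance, once $\sigma_{1,3}^{-1}$ is forced past an $\sigma_{1,4}$ and becomes $\sigma_{3,4}^{-1}$ by (\ref{eqn:A1}): then $\sigma_{3,4}^{-1}$ slides cleanly to the right across the remaining $\sigma_{2,4}$'s and $\sigma_{1,4}$'s via $\sigma_{3,4}^{-1}\sigma_{2,4}\sim\sigma_{2,3}\sigma_{3,4}^{-1}$ and $\sigma_{3,4}^{-1}\sigma_{1,4}\sim\sigma_{1,3}\sigma_{3,4}^{-1}$, leaving strand~$4$ met by a single negative band, and we destabilize.

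The main obstacle is making this termination argument airtight. Sliding $\sigma_{1,2}$ past $\sigma_{2,3}$'s, or pushing $\sigma_{1,3}^{-1}$ past a $\sigma_{2,4}$ through $\sigma_{1,3}^{-1}\sigma_{2,4}\sim\sigma_{1,4}\sigma_{2,3}\sigma_{1,2}^{-1}\sigma_{3,4}^{-1}$, creates auxiliary bands (positive $\sigma_{1,3}$'s, a transient $\sigma_{1,2}^{-1}$, a transient $\sigma_{3,4}^{-1}$) that do not fit the standard form, and one must check that each is eventually cancelled (a positive $\sigma_{1,3}$ against the negative $\sigma_{1,3}^{-1}$, or a $\sigma_{1,2}^{-1}$ against an available $\sigma_{1,2}$), or ends up as the unique band on strand~$3$ or strand~$4$ so that a destabilization applies, while every move that does not terminate strictly increases the complexity. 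One then has to verify that the finitely many residual configurations are exactly the negatively destabilizable ones, and that none of them secretly requires a flype (the situation that genuinely occurs in Case~1): it is precisely the presence of a $\sigma_{1,2}$ that should exclude those flype-only configurations.
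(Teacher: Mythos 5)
Your first sub-case (no $R_i$ contains $\sigma_{2,3}$, so the positive part lives on strands $1,2,4$ and the lone $\sigma_{1,3}^{-1}$ destabilizes) is correct and is exactly the paper's first step. The problem is the remaining sub-case, which is the entire content of the claim: there you only describe a rewriting strategy and then, in your final paragraph, explicitly defer the termination argument, the cancellation of the auxiliary bands, and the classification of the ``finitely many residual configurations.'' None of that is carried out, so the claim is not proved. Moreover the sketch itself has concrete technical obstructions: $\sigma_{1,2}$ does not slide cleanly leftward past $\sigma_{1,4}$ (the relation is $\sigma_{1,2}\sigma_{1,4}=\sigma_{1,4}\sigma_{2,4}=\sigma_{2,4}\sigma_{1,2}$, so moving left changes the letter rather than transposing it, and $\sigma_{1,4}\sigma_{1,2}\sigma_{1,4}^{-1}$ is not a band generator), and $\sigma_{1,3}^{-1}\sigma_{2,4}$ admits no clean band rewriting since $\sigma_{1,3}$ and $\sigma_{2,4}$ are the linked pair in $B_4$; your proposed substitute introduces two new negative bands whose eventual cancellation is precisely what would need proof. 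Finally, it is not established that the proposed moves interact monotonically with the complexity, so the appeal to minimality does not yet yield a contradiction.

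For comparison, the paper's proof avoids any termination argument. It anchors on the \emph{last} occurrence of $\sigma_{1,2}$, writing $K=\sigma_{1,3}^{-1}\sigma_{2,4}\beta_0\sigma_{1,2}\beta_1$ with $\beta_1$ free of $\sigma_{1,2}$ and $\sigma_{3,4}$, and splits on where $\sigma_{2,3}$ occurs. If $\sigma_{2,3}$ occurs in $\beta_0$, the single substitution $\sigma_{1,3}^{-1}=\sigma_{2,3}\sigma_{1,2}^{-1}\sigma_{2,3}^{-1}$ and a regrouping exhibit $K$ as $\sigma_{2,3}\bigl[\sigma_{1,2}^{-1}(\cdots)\sigma_{1,2}\bigr]\beta_1$, manifestly quasipositive. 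If $\sigma_{2,3}$ occurs only in $\beta_1$, one writes $\beta_1=\sigma_{2,4}^{p}\sigma_{1,4}^{q}\sigma_{2,3}\beta''$ and a short analysis of $\beta''$ shows that either strand $3$ meets only the negative band (destabilize) or a $\sigma_{3,4}$ is forced via $\sigma_{2,3}\sigma_{2,4}=\sigma_{2,4}\sigma_{3,4}$, contradicting the hypothesis. If you want to salvage your approach, the key structural dichotomy you are missing is exactly this: the position of $\sigma_{2,3}$ relative to the last $\sigma_{1,2}$, not a global leftward slide of a $\sigma_{1,2}$ toward the negative band.
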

\begin{proof}
By assumption we may write $K$ as
$$
K = \sigma_{1,3}^{-1} \sigma_{2,4} \beta_0 \sigma_{1,2} \beta_1
$$
where $\beta_0$ and $\beta_1$ are strongly quasipositive braids such that 
\begin{itemize}
\item $\beta_0$ does not contain $\sigma_{3,4}$.
\item $\beta_{1}$ does not contain $\sigma_{3,4}$ or $\sigma_{1,2}$.
\end{itemize} 

If neither $\beta_0$ nor $\beta_1$ contain $\sigma_{2,3}$, then the first letter $\sigma_{1,3}^{-1}$ is the unique band that touches the 3rd strand so $K$ is negatively destabilizable to a strongly quasipositive 3-braid.

If $\beta_0$ contains $\sigma_{2,3}$ then we have $\beta_0=\beta' \sigma_{2,3}\beta''$ for some strongly quasipositive braids $\beta'$ and $\beta''$. Hence
\begin{align*}
K &=\sigma_{1,3}^{-1} \sigma_{2,4} \beta' \sigma_{2,3}\beta'' \sigma_{1,2} \beta_1\\
 & \sim (\sigma_{2,3} \sigma_{1,2}^{-1}\sigma_{2,3}^{-1}) \sigma_{2,4} \beta' \sigma_{2,3}\beta'' \sigma_{1,2} \beta_1\\
 & \sim \sigma_{2,3}[\sigma_{1,2}^{-1}(\sigma_{2,3}^{-1} \sigma_{2,4} \beta' \sigma_{2,3})\beta'' \sigma_{1,2}] \beta_1
\end{align*}
so $K$ is quasipositive.

Thus we may assume that $\beta_0$ does not contain $\sigma_{2,3}$ but $\beta_1$ contains at least one $\sigma_{2,3}$. We may write $\beta_1=\beta' \sigma_{2,3}\beta''$ where $\beta'$ and $\beta''$ are strongly quasipositive braids such that 
\begin{itemize}
\item $\beta'$ is a word in the letters $\sigma_{1,4}$ and $\sigma_{2,4}$.
That is, $\beta' = \sigma_{2,4}^p\sigma_{1,4}^q$ for some $p,q\geq 0$. 
\item $\beta''$ is a word in the letters $\sigma_{1,4}$, $\sigma_{2,4}$ and $\sigma_{2,3}$.
\end{itemize}

If $\beta''=1$ then
\begin{align*}
K &=\sigma_{1,3}^{-1} \sigma_{2,4} \beta_0 \sigma_{1,2}\beta'\sigma_{2,3}\\
 & \approx \sigma_{2,4} \beta_0 \sigma_{1,2}\beta'\sigma_{2,3}\sigma_{1,3}^{-1}\\
 &\sim \sigma_{2,4} \beta_0 \sigma_{1,2}\beta'\sigma_{1,3}^{-1}\sigma_{1,2}. 
\end{align*}
Since both $\beta_0$ and $\beta'$ contain neither $\sigma_{2,3}$ nor $\sigma_{3,4}$, the negative band $\sigma_{1,3}^{-1}$ is the unique band that touches the third strand. Thus $K$ is negatively destabilizable. 

So we may assume $\beta''\neq 1$. Suppose $\beta''$ does not contain $\sigma_{2,4}$. Then $\beta''$ is a word in the letters $\sigma_{2,3}$ and $\sigma_{1,4}$. After sliding all such $\sigma_{1,4}$ into $\beta'$ we may write
\begin{align*}
K &= \sigma_{1,3}^{-1}\sigma_{2,4}\beta_0\sigma_{1,2}\sigma_{2,4}^x\sigma_{1,4}^y\sigma_{2,3}^z\\
&\approx \sigma_{2,3}^z\sigma_{1,3}^{-1}\sigma_{2,4}\beta_0\sigma_{1,2}\sigma_{2,4}^x\sigma_{1,4}^y\\
&\sim\sigma_{1,3}^{-1}\sigma_{1,2}^z\sigma_{2,4}\beta_0\sigma_{1,2}\sigma_{2,4}^x\sigma_{1,4}^y.
\end{align*}
The first letter $\sigma_{1,3}^{-1}$ is the only band touching the third strand, and we may negatively destabilize $K$ to a strongly quasipositive 3-braid.

Finally, we may assume that $\beta''\neq 1$ contains a $\sigma_{2,4}.$ If the first letter of $\beta''$ is $\sigma_{1,4},$ we may slide it into $\beta'$. If the first letter of $\beta''$ is a $\sigma_{2,4}$, then we may apply the relation $\sigma_{2,3}\sigma_{2,4} = \sigma_{2,4}\sigma_{3,4}$, contradicting the assumption that $\beta_1$ does not contain a $\sigma_{3,4}$. If $\beta''$ starts with $\sigma_{2,3}$ then by possibly sliding some $\sigma_{1,4}$ into $\beta'$ we may assume that $\beta'' = \sigma_{2,3}^x\sigma_{2,4}\beta'''$ for some strongly quasipositive braid $\beta'''$ and $x>0$. But we may again apply the above relation to produce a $\sigma_{3,4}$ in $\beta_1$. 
\end{proof}

Thus we may assume that $S_{i}=\sigma_{1,4}^{s_i}$ $(s_{i}>0)$, and that $R_{i}$ does not contain $\sigma_{3,4}$ for $i>1$. 

\begin{claim}
If $n > 1$, then $K$ is quasipositive.
\end{claim}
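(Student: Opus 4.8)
The hypothesis $n>1$ guarantees that the block $S_1=\sigma_{1,4}^{s_1}$ (with $s_1>0$) genuinely occurs in $K$, and the whole argument rests on the $B_4$ identity
\[
\sigma_{2,3}\,\sigma_{1,4}\,\sigma_{2,4}=\sigma_{3,4}\,\sigma_{1,3}\,\sigma_{2,3},
\]
which one checks by a short chain of band relations: $\sigma_{2,3}$ and $\sigma_{1,4}$ commute (nested), and then one applies the triple relations on $\{2,3,4\}$, $\{1,2,4\}$ and $\{1,2,3\}$ in turn, using also that $\sigma_{1,2}$ and $\sigma_{3,4}$ commute. The point of this identity is that it \emph{produces} a positive band $\sigma_{1,3}$ out of strongly quasipositive letters that are already present in $K$.

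Concretely, suppose that for some $i\in\{1,\dots,n-1\}$ one can rewrite the relevant blocks so that $R_i=R_i^{\flat}\,\sigma_{2,3}$ with $R_i^{\flat}$ positive and $R_{i+1}=\sigma_{2,4}\,R_{i+1}^{\sharp}$ with $R_{i+1}^{\sharp}$ positive. Using $\sigma_{2,3}\sigma_{1,4}=\sigma_{1,4}\sigma_{2,3}$ and $\sigma_{1,4}\sigma_{2,4}=\sigma_{2,4}\sigma_{1,2}$ together with the identity above, $\sigma_{2,3}\,\sigma_{1,4}^{s_i}\,\sigma_{2,4}=\sigma_{1,4}^{s_i-1}\,\sigma_{3,4}\,\sigma_{1,3}\,\sigma_{2,3}$, and hence
\[
K=\sigma_{1,3}^{-1}\,\alpha\,\sigma_{1,3}\,\gamma=\bigl(\sigma_{1,3}^{-1}\alpha\,\sigma_{1,3}\bigr)\,\gamma ,
\]
where $\alpha=\sigma_{2,4}R_1S_1\cdots R_{i-1}S_{i-1}R_i^{\flat}\sigma_{1,4}^{s_i-1}\sigma_{3,4}$ and $\gamma=\sigma_{2,3}R_{i+1}^{\sharp}S_{i+1}R_{i+2}\cdots R_n$ are both products of positive band generators, i.e.\ strongly quasipositive. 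Since each band generator is a conjugate of a standard generator, $\sigma_{1,3}^{-1}\alpha\sigma_{1,3}$ is a product of conjugates of standard generators, and so is $\gamma$; thus $K$ is quasipositive and we are done in this case.

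It remains to force such an $i$ to exist. Here minimality of the complexity eliminates the degenerate configurations: every $S_i$ and every $R_i$ with $2\le i\le n-1$ is nonempty (otherwise two blocks merge and $n$ drops), and if some $R_i$ were a pure power $\sigma_{2,3}^{r}$ it would commute with its neighbouring $S$-blocks and could be absorbed into the adjacent $R$-block, again lowering the complexity. Consequently every $R_{i+1}$ (for $1\le i\le n-1$) contains a $\sigma_{2,4}$ and can be rewritten via $\sigma_{2,3}\sigma_{2,4}=\sigma_{2,4}\sigma_{3,4}$ to begin with $\sigma_{2,4}$, so the only way the trigger at position $i$ can fail is that $R_i$ carries no $\sigma_{2,3}$ usable at its right end; tracking which blocks can be in that state leaves only the ``pure'' configuration $K=\sigma_{1,3}^{-1}\sigma_{2,4}^{m}\sigma_{1,4}^{s_1}\sigma_{2,4}^{r_2}\sigma_{1,4}^{s_2}\cdots\sigma_{1,4}^{s_{n-1}}\sigma_{2,4}^{r_n}$ with $m,s_j>0$ and $r_2,\dots,r_{n-1}>0$. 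This residual case is dispatched by a direct computation migrating the single negative band $\sigma_{1,3}^{-1}$ rightward through the interlaced powers $\sigma_{2,4}^{*}$, $\sigma_{1,4}^{*}$ and keeping track of the positive bands it spawns — entirely parallel to the last displayed calculation in the proof of Claim~\ref{claim:R} — either ending on a quasipositive word or producing a trivial block and thereby contradicting minimality once more. The main obstacle is precisely this pure case: with no spare $\sigma_{2,3}$ to convert, the negative band has to be pushed through by hand using the (non-commuting) relations among $\sigma_{1,3},\sigma_{1,4},\sigma_{2,4},\sigma_{3,4}$, and one must verify that the bookkeeping closes up to a genuinely quasipositive expression.
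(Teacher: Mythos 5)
Your opening identity $\sigma_{2,3}\,\sigma_{1,4}\,\sigma_{2,4}=\sigma_{3,4}\,\sigma_{1,3}\,\sigma_{2,3}$ is correct, and so is the deduction that $K=(\sigma_{1,3}^{-1}\alpha\,\sigma_{1,3})\gamma$ is quasipositive whenever some $R_i$ ends in $\sigma_{2,3}$ and $R_{i+1}$ begins with $\sigma_{2,4}$ --- this is the same device as in Claims~\ref{claim:obs1} and~\ref{claim:obscase2}. But the argument does not close. The reduction to the ``pure'' configuration is not actually carried out: at this stage nothing has been established about $R_1$, which (unlike $R_2,\dots,R_n$) may still contain $\sigma_{3,4}$ and may contain $\sigma_{2,3}$'s in arbitrary positions, so ``tracking which blocks can be in that state'' does not leave only $\sigma_{2,4}R_1=\sigma_{2,4}^{m}$; and your rewriting of $R_{i+1}$ to begin with $\sigma_{2,4}$ via $\sigma_{2,3}\sigma_{2,4}=\sigma_{2,4}\sigma_{3,4}$ spawns $\sigma_{3,4}$'s, which is only rescued (for $i+1>1$) by invoking Claim~\ref{claim:obscase2}.

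More decisively, the residual pure case is exactly the case you leave unproved: you assert it is ``dispatched by a direct computation'' and then concede that ``one must verify that the bookkeeping closes up.'' That verification is the entire content of the claim, so as written this is a genuine gap. In fact no migration of $\sigma_{1,3}^{-1}$ and no trigger identity are needed: the paper's proof simply slides the first letter of $R_2$ leftward across $S_1=\sigma_{1,4}^{s_1}$ into $R_1$ --- a leading $\sigma_{2,4}$ via $\sigma_{1,4}^{s_1}\sigma_{2,4}=\sigma_{2,4}\sigma_{1,2}^{s_1}$ (which turns $S_1$ into $\sigma_{1,2}^{s_1}$, still a legal $S$-block), and a leading $\sigma_{2,3}$ by commutation with $\sigma_{1,4}$. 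Either move lengthens $R_1$ (or empties $R_2$ and merges two $S$-blocks), contradicting minimality of the complexity; in particular your pure configuration can never be a minimal-complexity standard representative. You quote both of the needed relations but never apply the first one as a complexity-reducing move, which is what makes the paper's argument three lines long and makes your remaining ``main obstacle'' vanish.
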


\begin{proof}
We may write $R_{2}=\sigma_{2,4}^{x}\sigma_{2,3}^{y}$ for some $x,y\geq 0$ such that $x+y>0$.

If $x>0$ then 
\begin{align*}
K &= \sigma_{1,3}^{-1}\sigma_{2,4}R_1 \sigma_{1,4}^{s_1}\sigma_{2,4}^{x}\sigma_{2,3}^{y} S_2 \cdots R_n\\
& \sim  \sigma_{1,3}^{-1}\sigma_{2,4}(R_1\sigma_{2,4}) \sigma_{1,2}^{s_1} (\sigma_{2,4}^{x-1}\sigma_{2,3}^{y} )S_2\cdots R_n.
\end{align*}
We get a standard representative with smaller complexity since $-l(R_1) > -l(R_1 \sigma_{2,4})$. 

If $x=0$ then $y>0$ and
\begin{align*}
K &= \sigma_{1,3}^{-1}\sigma_{2,4}R_1 \sigma_{1,4}^{s_1}\sigma_{2,3}^{y} S_2\cdots R_n\\
& \sim  \sigma_{1,3}^{-1}\sigma_{2,4}(R_1\sigma_{2,3}^{y})\sigma_{1,4}^{s_1}S_2 \cdots R_n
\end{align*}
so we get a standard representative with smaller complexity.
\end{proof}

Finally it remains to check the case $R_{2}$ is empty so  $K=\sigma_{1,3}^{-1}\sigma_{2,4}R_1\sigma_{1,4}^{s_1}$. 

\begin{claim}
If $K=\sigma_{1,3}^{-1}\sigma_{2,4}R_1\sigma_{1,4}^{s_1}$ then $K$ is negatively destabilizable to a strongly quasipositive 3-braid. 
\end{claim}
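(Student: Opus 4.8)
The plan is to transform $K=\sigma_{1,3}^{-1}\sigma_{2,4}R_1\sigma_{1,4}^{s_1}$, using only cyclic permutation, conjugation, and the two defining band relations, into the shape $\sigma_{1,2}^{-1}Q$ in which the first strand meets the rest of the braid only through the single negative band $\sigma_{1,2}^{-1}$, and then to remove that strand by a negative destabilization. Throughout I will write $B_{\{2,3,4\}}$ for the parabolic subgroup $\langle \sigma_{2,3},\sigma_{3,4}\rangle\leq B_4$ (the braid group on strands $2,3,4$); note $\sigma_{2,4}=\sigma_{3,4}\sigma_{2,3}\sigma_{3,4}^{-1}\in B_{\{2,3,4\}}$, so every positive word in $\{\sigma_{2,3},\sigma_{3,4},\sigma_{2,4}\}$ lies in $B_{\{2,3,4\}}$.

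First I would slide the block $\sigma_{1,4}^{s_1}$ past the negative band. The relation $\sigma_{j,k}\sigma_{i,j}=\sigma_{i,j}\sigma_{i,k}=\sigma_{i,k}\sigma_{j,k}$ applied with $(i,j,k)=(1,3,4)$ gives $\sigma_{1,3}\sigma_{1,4}=\sigma_{3,4}\sigma_{1,3}$, hence $\sigma_{1,4}\sigma_{1,3}^{-1}=\sigma_{1,3}^{-1}\sigma_{3,4}$ and, iterating, $\sigma_{1,4}^{s_1}\sigma_{1,3}^{-1}=\sigma_{1,3}^{-1}\sigma_{3,4}^{s_1}$. Cyclically permuting $K$ and then applying this identity,
\[
K\approx \sigma_{1,4}^{s_1}\sigma_{1,3}^{-1}\sigma_{2,4}R_1=\sigma_{1,3}^{-1}\bigl(\sigma_{3,4}^{s_1}\sigma_{2,4}R_1\bigr)=\sigma_{1,3}^{-1}P,
\]
where $P:=\sigma_{3,4}^{s_1}\sigma_{2,4}R_1$ is a positive word in $\{\sigma_{2,3},\sigma_{3,4},\sigma_{2,4}\}$, hence an element of $B_{\{2,3,4\}}$ that is, on its face, strongly quasipositive. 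Next I would convert the remaining negative band into $\sigma_{1,2}^{-1}$: using the relation with $(i,j,k)=(1,2,3)$ we have $\sigma_{1,3}\sigma_{2,3}=\sigma_{2,3}\sigma_{1,2}$, so $\sigma_{2,3}^{-1}\sigma_{1,3}^{-1}\sigma_{2,3}=\sigma_{1,2}^{-1}$, and therefore
\[
K\approx \sigma_{2,3}^{-1}\bigl(\sigma_{1,3}^{-1}P\bigr)\sigma_{2,3}=\sigma_{1,2}^{-1}Q,\qquad Q:=\sigma_{2,3}^{-1}P\sigma_{2,3}.
\]
Since $P\in B_{\{2,3,4\}}$ and $\sigma_{2,3}\in B_{\{2,3,4\}}$, also $Q\in B_{\{2,3,4\}}$, and $Q$ is conjugate (within $B_{\{2,3,4\}}$) to the explicit strongly quasipositive word $P$.

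Finally, in $\sigma_{1,2}^{-1}Q$ the first strand is crossed exactly once, by $\sigma_{1,2}^{-1}=\sigma_1^{-1}$, and $Q$ does not involve the first strand; hence the closed $4$-braid $\widehat{\sigma_{1,2}^{-1}Q}$ is a negative destabilization of the closed $3$-braid $\widehat{Q}$ on strands $2,3,4$. Thus $K$ negatively destabilizes to the $3$-braid $Q$, which is conjugate to the strongly quasipositive braid $P$, giving the desired conclusion.

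I do not expect a genuine obstacle here: every move is a cyclic permutation, a conjugation, or one of the two band relations, and no case analysis is left (the structure of $R_1$ is irrelevant, and the hypothesis $s_1\ge 1$ is not even needed). The only points requiring care are the precise form of the commutation identity $\sigma_{1,4}^{s_1}\sigma_{1,3}^{-1}=\sigma_{1,3}^{-1}\sigma_{3,4}^{s_1}$ and the bookkeeping that $P$ and $Q$ never touch the first strand, which is exactly what legitimizes the final destabilization.
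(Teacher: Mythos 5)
Your argument is correct and follows essentially the same route as the paper: cyclically permute so that the block $\sigma_{1,4}^{s_1}$ meets $\sigma_{1,3}^{-1}$, use the band relation $\sigma_{1,4}\sigma_{1,3}^{-1}=\sigma_{1,3}^{-1}\sigma_{3,4}$ to convert it to $\sigma_{3,4}^{s_1}$ so that the first strand is touched only by the single negative band, and then destabilize negatively. The extra conjugation by $\sigma_{2,3}$ to reach the literal form $\sigma_1^{-1}Q$ is a harmless normalization (the paper destabilizes directly from $\sigma_{2,4}R_1\sigma_{1,3}^{-1}\sigma_{3,4}^{s_1}$); note only that your resulting $3$-braid $Q$ is conjugate to, rather than literally equal to, the strongly quasipositive word $P=\sigma_{3,4}^{s_1}\sigma_{2,4}R_1$, which you correctly flag and which suffices since the conclusion concerns the closed braid.
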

\begin{proof}
$K =\sigma_{1,3}^{-1}\sigma_{2,4}R_1\sigma_{1,4}^{s_1} \sim \sigma_{2,4}R_1\sigma_{1,4}^{s_1}\sigma_{1,3}^{-1} \sim \sigma_{2,4}R_1\sigma_{1,3}^{-1}\sigma_{3,4}^{s_1}$
so it is negatively destabilizable.
\end{proof}

\section{Tables of strongly quasipositive and quasipositive knots up to 12 crossings}
\label{sec:tables}

In the section we list all strongly quasipositive (resp. quasipositive) knots up to 12 crossings (except for $12n_{239}$ and $12n_{512}$) along with their strongly quasipositive (resp. quasipositive) braid representatives. 
We could not determine whether $12n_{239}$ and $12n_{512}$ are quasipositive or not. 

In the table, we express a braid word $\sigma_{i_1}^{\pm 1}\sigma_{i_2}^{\pm 2} \cdots$ by a sequence of integers $\pm i_1,\pm i_2,\cdots$.
Also, in the column ``comment'',
\begin{itemize}
\item[--] (mirror) means that a strongly quasipositive braid or quasipositive braid in the table represents the mirror image of the knot.
\item[--] (flype) means that we obtained the strongly quasipositive representative by applying a flype to the known braid representative. 
\item[--] (Theorem 3.1)/(Theorem 3.3) means that the strongly quasipositive braid representative is obtained by applying Theorem 3.1/Theorem 3.3.
\item[--] (SQP-unknown) means that the strongly quasipositivity was previously unknown, according to KnotInfo as of November 2016. There are thirteen such knots. 
\item[--] (QP-known) means that the quasipositivity was previously known, according to KnotInfo as of November 2016. 
\item[--] (positive braid) means that the knot is a positive braid knot. 
\end{itemize}

\subsection{Tables of strongly quasipositive knots}\label{subsec-Table-SQP}

The following is the list of all the knots up to 12 crossings for which the Bennequin inequality is an equality; equivalently, $\delta_3(\K)=0$. 
We confirmed that all of them are strongly quasipositive, and we list their strongly quasipositive braid representatives. Consequently, the following is a complete list of strongly quasipositive knots up to 12 crossings.

\begin{tabular}{|l || l | l |}
\hline
Knot & Strongly quasipositive braid representative   & Comment
\\ \hline
$3_1$  & 1,1,1 &  positive braid
\\ \hline
$5_1$  & 1,1,1,1,1 &  positive braid
\\ \hline
$5_2$  & 1,1,2 (2,1,-2) &
\\ \hline
$7_1$  & 1,1,1,1,1,1,1 &  positive braid
\\ \hline
$7_2$ &  1,1,(3,2,-3), (2,1,-2), 3 &
\\ \hline
$7_3$ & 1,1,1,1,2, (2,1,-2) &  Theorem~\ref{1negative}
\\ \hline
$7_4$ & 1, (3,2,-3), (3,2,1,-2,-3), (2,1,-2),3 & 
\\ \hline
$7_5$ & 1, 1,1,2, (2,1,-2), (2,1,-2) & Theorem~\ref{1negative}
\\ \hline
$8_{15}$ & 1, (3,2,-3), (3,2,-3), (3,2,1,-2,-3), (3,2,1,-2,-3),2,3 & 
\\ \hline
$9_1$ & 1,1,1,1,1,1,1,1,1 & positive braid
\\ \hline
$9_2$ & 2, (2,1,-2), (4,3,2,-3,-4), 2, (3,2,1,-2,-3), 4 & 
\\ \hline
$9_3$ & 1,1,1,1,1,1,2, (-1,2,1) &  Theorem~\ref{1negative}
\\ \hline
$9_4$ & 1,1,1,1, (3,2,-3), (2,1,-2), 3 & 
\\ \hline
$9_5$ & 2, (2,1,-2), (2,1,-2), (4,3,2,-3,-4), (3,2,1,-2,-3), 4 & 
\\ \hline
$9_6$ & 1,1,1,1,1, 2, (2,1,-2), (2,1,-2) &  Theorem~\ref{1negative}
\\ \hline
$9_7$ & 1,1,1,(3,2,-3), (2,1,-2), 3,3 & 
\\ \hline
$9_9$ & 1,1,1,1,2,(2,1,-2), (2,1,-2), (2,1,-2) &  Theorem~\ref{1negative}
\\ \hline
$9_{10}$ & 1, (3,2,-3), (3,2,1,-2,-3),(3,2,1,-2,-3),(3,2,1,-2,-3),(2,1,-2),3 & 
\\ \hline
$9_{13}$ & 1,1,1,(3,2,-3),(3,2,1,-2,-3),(2,1,-2),3 & 
\\ \hline
$9_{16}$ & 1,1,1,2,2,(2,1,-2),(2,1,-2),(2,1,-2) &  Theorem~\ref{1negative}
\\ \hline
$9_{18}$ & 1,1,(3,2,-3),(3,2,1,-2,-3),(3,2,1,-2,-3),(2,1,-2),3 & 
\\ \hline
$9_{23}$ & 1,1,(3,2,-3),(3,2,1,-2,-3),(2,1,-2),3,3 & 
\\ \hline
$9_{35}$ & 2, (2,1,-2), (3,2,-3), (2,1,-2), (4,3,2,-3,-4),(4,3,2,1,-2,-3,-4) & 
\\ \hline
$9_{38}$ & 1, (3,2,-3), (3,2,-3), 2, (2,1,-2),3,(3,2,-3) & 
\\ \hline
$9_{49}$ &1, (3,2,-3),1,1,2,(2,1,-2),3 & 
\\ \hline
%
%
$10_{49}$ & (2,1,-2),(2,1,-2),(2,1,-2),(2,1,-2),1,(3,2,-3),(3,2,-3),2,3 & 
\\ \hline
$10_{53}$ & 1,2,(3,2,1,-2,-3),(2,1,-2),(4,3,-4),(4,3,-4),3,4  & 
\\ \hline
$10_{55}$ & 1,1,(3,2,-3),(2,1,-2),(4,3,-4),3,4 & 
\\ \hline
$10_{63}$ & 1,1,(4,3,2,1,-2,-3,-4),2,(2,1,-2),(2,1,-2),3,4 & 
\\ \hline
$10_{66}$ & 1,1,1,(3,2,1,-2,-3), 2,(2,1,-2),(2,1,-2),3,3 & 
\\ \hline
$10_{80}$ & (2,1,-2), (2,1,-2), (2,1,-2),1,1,(3,2,-3),(3,2,-3),2,3 & 
\\ \hline
$10_{101}$ & 1,1,2,3,(3,2,1,-2,-3),(2,1,-2),4,(4,3,-4) & 
\\ \hline
$10_{120}$ & 1,(3,2,-3),(4,3,2,1,-2,-3,-4),2,(2,1,-2),(4,3,-4),3,4 & 
\\ \hline
$10_{124}$ & 1,1,1,1,1,2,1,1,1,2  & positive braid
\\ \hline
$10_{128}$ & 1,1,1,(3,2,-3),1,1,(3,2,-3),2,3  & 
\\ \hline
$10_{134}$ & 1,1,1,(3,2,-3),1,1,2,3,3 & 
\\ \hline
$10_{139}$ & 1,1,1,1,2,1,1,1,2,2  & positive braid
\\ \hline
$10_{142}$ & 1,1,1,(2,1,-2),1,1,1,2,3 & 
\\ \hline
$10_{145}$ & 1,2,(2,1,-2),3,2,(2,1,-2),3 & 
\\ \hline
$10_{152}$ & 1,1,1,2,2,1,1,2,2,2  & positive braid
\\ \hline
$10_{154}$ & 1,1,2,(-1,2,1),3,2,2,2,3 & 
\\ \hline
$10_{161}$ &1,1,1,2,(-1,2,1),1,2,2 & 
\\ \hline
\end{tabular}

\begin{tabular}{|l || l | l | }
\hline
Knot & Strongly quasipositive braid representative   & Comment 
\\ \hline
$11_{a43}$ & 1,1,(3,2,-3),(3,2,1,-2,-3),2,(4,3,-4),(4,3,-4),3,3,4 &
\\ \hline
$11_{a94}$ & 1,1,1,(3,2,-3),(3,2,-3),(3,2,1,-2,-3),(2,1,-2),3,3 &
\\ \hline
$11_{a95}$ & 1,1,(4,3,2,-3,-4),(4,3,2,1,-2,-3,-4),3,(3,2,1,-2,-3),(4,3,-4),3 &
\\ \hline
$11_{a123}$ & 2,(2,1,-2),(4,3,2,-3,-4),(4,3,2,1,-2,-3,-4),(3,2,-3),2,(3,2,1,-2,-3),4 &
\\ \hline
$11_{a124}$ & 1,1, (3,2,-3),(3,2,-3),2,(2,1,-2),3,3,(3,2,1,-2-3) &
\\ \hline
$11_{a186}$ & 1,1,(3,2,-3),(3,2,1,-2,-3),(3,2,1,-2,-3),(3,2,1,-2,-3),(2,1,-2),3,3 &
\\ \hline
$11_{a191}$ &1,1,1,1,(3,2,-3),(3,2,1,-2,-3),(2,1,-2),3,3 &
\\ \hline
$11_{a192}$ &2, (2,1,-2),1,(3,2,1,-2,-3),2,(4,3,-4),3,4 &
\\ \hline
$11_{a200}$ & (4,3,2,1,-2,-3,-4),(2,1,-2)$^2$ (4,3,2,-3,-4),2,3,4,4 & flype
\\ \hline
$11_{a227}$ &1,1,(3,2,-3)$^2$,2,(2,1,-2),3,(3,2,1,-2,-3)$^2$ &
\\ \hline
$11_{a234}$ & 1,1,1,1,1,1,1,2, (2,1,-2),(2,1,-2)  & Theorem~\ref{1negative}
\\ \hline
$11_{a235}$ &1,1,(3,2,-3),(3,2,1,-2,-3)$^4$,(2,1,-2),3 &
\\ \hline
$11_{a236}$ & 1,1,1,(3,2,-3),(3,2,1,-2,-3)$^2$,(2,1,-2),3,3&
\\ \hline
$11_{a237}$ & (4,3,2,1,-2,-3,-4), (2,1,-2)$^2$, (4,3,-4), (4,3,2,-3,-4), 2,3,4 & flype
\\ \hline
$11_{a238}$ &2,2,(2,1,-2),1,1,(4,3,2,-3,-4),(3,2,1,-2,-3),4 &
\\ \hline
$11_{a240}$ & 1,1,1,1,1,2,2,(2,1,-2),(2,1,-2),(2,1,-2) & Theorem~\ref{1negative}
\\ \hline
$11_{a241}$ &1,1,1,(3,2,-3)$^2$,(3,2,1,-2,-3)$^2$,(2,1,-2),3 &
\\ \hline
$11_{a242}$ &1,1,1,1,1,(3,2,-3),(2,1,-2),3,3 &
\\ \hline
$11_{a243}$ &1,(4,3,2,-3,-4),(3,2,1,-2,-3),4,4,(4,3,-4),3 &
\\ \hline
$11_{a244}$ &1,1,(3,2,-3)$^2$,2,(2,1,-2)$^2$,3,(3,2,1,-2,-3) &
\\ \hline
$11_{a245}$ & 1,1,(3,2,-3),(2,1,-2)$^2$,3,3,3&
\\ \hline
$11_{a246}$ & 1,1,1,(4,3,2,-3,-4),(3,2,1,-2,-3),(4,3,-4),3,3&
\\ \hline
$11_{a247}$ & 2,2,(2,1,-2),(5,4,3,2,-3,-4,-5),(5,4,3,2,1,-2,-3,-4,-5),(5,4,-5),4&
\\ \hline
$11_{a263}$ &1,1,1,2,2,3, (2,1,-2),(2,1,-2),(2,1,-2),3,3 & Theorem~\ref{1negative}
\\ \hline
$11_{a291}$ &1,(3,2,-3)$^4$,2,(2,1,-2),3,(3,2,1,-2,-3) &
\\ \hline
$11_{a292}$ & 1,(4,3,2,-3,-4),(4,3,2,1,-2,-3,-4),(3,2,-3),(3,2,1,-2,-3),4,(4,3,-4),3&
\\ \hline
$11_{a298}$ & 1,1,1,(3,2,-3)$^2$,2,(2,1,-2),3,(3,2,1,-2,-3)&
\\ \hline
$11_{a299}$ & 2,(2,1,-2),(4,3,2,-3,-4)$^2$,(3,2,-3),(3,2,1,-2,-3),4,(4,3,2,1,-2,-3,-4)&
\\ \hline
$11_{a318}$ & 1,(3,2,-3)$^2$,2,(2,1,-2)$^2$, 3, (3,2,-3)$^2$&
\\ \hline
$11_{a319}$ & 1,(3,2,-3)$^3$, 2, (2,1,-2),3,(3,2,1,-2,-3)$^2$&
\\ \hline
$11_{a320}$ & 1,1,(4,3,2,-3,-4),(3,2,1,-2,-3),4,2,(2,1,-2),(4,3,-4)&
\\ \hline
$11_{a329}$ & 1,(4,3,2,-3,-4),(4,3,2,1,-2-,-3,-4),(3,2,1,-2,-3),2,(2,1,-2),3,4&
\\ \hline
$11_{a334}$ & 1,1,1,1,1,2,(2,1,-2),(2,1,-2),(2,1,-2),(2,1,-2), & Theorem~\ref{1negative}
\\ \hline
$11_{a335}$ & 1,1,1,1,(3,2,-3),(3,2,1,-2,-3)$^2$,(2,1,-2),3&
\\ \hline
$11_{a336}$ & 1,1,1,1,1,(3,2,-3),(3,2,1,-2,-3),(2,1,-2),3&
\\ \hline
$11_{a337}$ & 1,(2,1,-2),1,1,(4,3,2,-3,-4),(4,3,2,1,-2,-3,-4),(3,2,1,-2,-3),4&
\\ \hline
$11_{a338}$ &1,1,1,1,2,2,2,(2,1,-2),(2,1,-2),(2,1,-2) & Theorem~\ref{1negative}
\\ \hline
$11_{a339}$ &1,1,1,1,(3,2,-3),(2,1,-2),3,3,3 &
\\ \hline
$11_{a340}$ &1,1,1,(3,2,-3)$^3$,(3,2,1,-2,-3),(2,1,-2),3 &
\\ \hline
$11_{a341}$ & 1,1,1,(2,1,-2),(4,3,2,-3,-4),(4,3,2,1,-2,-3,-4),(3,2,1,-2,-3),4&
\\ \hline
\end{tabular}

\begin{tabular}{|l || l | l | }
\hline
Knot & Strongly quasipositive braid representative   & Comment 
\\ \hline
$11_{a342}$ & 1,1,1,1,(2,1,-2),(4,3,2,-3,-4),(3,2,1,-2,-3),4&
\\ \hline
$11_{a343}$ & 2,(2,1,-2),1,(5,4,3,2,-3,-4,-5),(4,3,2,1,-2,-3,-4),(5,4,-5),4&
\\ \hline
$11_{a353}$ & 1,(3,2,-3)$^3$, 2, (2,1,-2)$^2$,3, (3,2,1,-2,-3) & Theorem~\ref{separating}
\\ \hline
$11_{a354}$ &1,1,(4,3,2,-3,-4),(4,3,2,1,-2,-3,-4),(3,2,-3),(3,2,1,-2,-3),(4,3,-4),3 &
\\ \hline
$11_{a355}$ & 1,1,1,1,1,1,2,(2,1,-2),(2,1,-2),(2,1,-2) &  Theorem~\ref{1negative}
\\ \hline
$11_{a356}$ & 1,1,1,(3,2,-3),(3,2,1,-2,-3)$^3$ (2,1,-2),3 & 
\\ \hline
$11_{a357}$ & 1,1,1,(3,2,-3),(3,2,1,-2,-3),(2,1,-2),3,3,3  &
\\ \hline
$11_{a358}$ & 1,1,1,1,1,1,(3,2,-3),(2,1,-2),3 &
\\ \hline
$11_{a359}$ & 2, (2,1,-2),1,1,1,(4,3,2,-3,-4), 3,4  & 
\\ \hline
$11_{a360}$ & 2, (2,1,-2),1,1,(4,3,2,-3,-4),2,2,3,4 &
\\ \hline
$11_{a361}$ & 1,1,1,(4,3,2,-3,-4),(3,2,1,-2,-3),2,3,4 &
\\ \hline
$11_{a362}$ & 1,(3,2,-3),(2,1,-2),3,(5,4,-5),(4,3,2,1,-2,-3,-4),5 & flype
\\ \hline
$11_{a363}$ &1,(2,1,-2),(5,4,3,2,-3-4,-5),(5,4,3,2,1,-2,-3,-4,-5),&\\
&(4,3,2,1,-2,-3,-4),(5,4,-5),4 &
\\ \hline
$11_{a364}$ & 1,1,1,1,1,1,1,1,2,(2,1,-2)  & Theorem~\ref{1negative}
\\ \hline
$11_{a365}$ &  1, (3,2,-3),(3,2,1,-2,-3)$^5$, (2,1,-2),3 & 
\\ \hline
$11_{a366}$ & (4,3,2,1,-2,-3,-4),(2,1,-2),(4,3,-4)$^2$, (4,3,2,-3,-4), 2,3,4  & flype
\\ \hline
$11_{a367}$ & 1,1,1,1,1,1,1,1,1,1,1  & positive braid
\\ \hline
\end{tabular}

 {\vspace{1in}\hspace{1cm}
\begin{tabular}{|l || l | l | }
\hline
Knot & Strongly quasipositive braid representative   & Comment 
\\ \hline
$11_{n77}$ & 1,1,2,2,1,3,2,2,2,3,3 & positive braid 
\\ \hline
$11_{n93}$ & 1,1, (3,2,-3),1,1,2,(2,1,-2),3,3 & Theorem \ref{separating}
\\ \hline
$11_{n126}$ & 1,1,(3,2,-3),1,1,(3,2,-3),2,(2,1,-2),3 & Theorem \ref{separating}
\\ \hline
$11_{n136}$ & 1,1,2,3,2,(3,2,1,-2,-3),2,(2,1,-2),(2,1,-2) & 
\\ \hline
$11_{n169}$ & 1,1,1,(3,2,-3),1,1,2,(2,1,-2),3   & Theorem \ref{separating}
\\ \hline
$11_{n171}$ & 2, (2,1 -2), (4,3,2,-3,-4),1,1,(3,2,-3),2,(3,2,1,-2,-3),4 &
\\ \hline
$11_{n180}$ & 1,1,1,2,3,2,(3,2,1,-2,-3),2,(2,1,-2) & \\ \hline
$11_{n181}$ & 1, (4,3,2,-3,-4),1,1,3,(3,2,-3),(3,2,1,-2,-3),(4,3,-4) &
\\ \hline
$11_{n183}$ & 1,1,(3,2,1,-2,-3),2,(2,1,-2),3,2,2,3  & Theorem~\ref{1negative}
\\ \hline
\end{tabular}
}

\begin{tabular}{|l || l | l| l |}
\hline
Knot & Strongly quasipositive braid representative   & Comment
\\ \hline
$12_{a43}$  &(2,1,-2)$^2$,(4,3,-4),3,4,1,(3,2,-3)$^2$,2,4& 
\\ \hline
$12_{a52}$ &1,2,2,2,3,(2,1,-2)$^4$,3,1 &
\\ \hline
$12_{a53}$ & 1,2,(2,1,-2),(4,3,-4)$^2$,1,3,4,(4,3,2,-3,-4)$^2$&
\\ \hline
$12_{a55}$ & 1,2,(2,1,-2),3,3,3,4,(4,3,2,-3,-4)$^2$&
\\ \hline
$12_{a56}$ & 1,(3,2,-3),(5,4,-5),4,5,(4,3,-4),5,(2,1,-2)$^2$&
\\ \hline
$12_{a82}$ & (2,1,-2)$^2$,1,2,(4,3,-4),3,4,(3,2,-3),2,4&
\\ \hline
$12_{a93}$ & 2,2,2,2,(2,1,-2),3,3,1,1,(3,2,-3),1&
\\ \hline
$12_{a94}$ & 1,2,(2,1,-2),3,3,3,1,(3,2,-3),4,(4,3,2,-3,-4)&
\\ \hline
$12_{a96}$ & 1,2,(2,1,-2),3,4,1,(4,3,2,-3,-4)$^4$&
\\ \hline
$12_{a97}$ & (3,2,-3),2,3,(5,4,-5),4,4,(5,4,3,2,1,-2,-3,-4,-5),5,(2,1,-2)&
\\ \hline
$12_{a102}$ & 1,2,(2,1,-2),(4,3,-4)$^2$,1,3,(3,2,-3),4,(4,3,2,-3,-4)&
\\ \hline
$12_{a107}$ & 1,2,(2,1,-2),(4,3,-4),3,3,1,(3,2,-3),4,(4,3,2,-3,-4)&
\\ \hline
$12_{a143}$ & 1,1,1,1,1,1,(2,1,-2),3,3,(3,2,-3),1&
\\ \hline
$12_{a144}$ & 1,2,(2,1,-2),3,1,(3,2,-3)$^3$,4,(4,3,2,-3,-4)&
\\ \hline
$12_{a145}$ &1,2,(2,1,-2),3,1,(3,2,-3),4,4,4,(4,3,2,-3,-4) &
\\ \hline
$12_{a152}$ & 1,2,3,(5,4,-5),4,4,(5,4,3,-4,-5),5,3,(2,1,-2),3&
\\ \hline
$12_{a156}$ & 1,(4,3,2,-3,-4),(2,1,-2),(4,3,-4),(3,2,1,-2,-3),(5,4,-5),4,4,5&
\\ \hline
$12_{a293}$ & (3,2,1,-2,-3)$^2$,4,1,(4,3,2,-3,-4)$^2$,2,(4,3,-4),3,4&  
\\ \hline
$12_{a295}$ & (3,2,1,-2,-3)$^2$,1,(3,2,-3),4,2,(4,3,-4),3,4&
\\ \hline
$12_{a319}$ & (2,1,-2)$^2$,3,1,(3,2,-3),4,2,2,2,(4,3,-4)&
\\ \hline
$12_{a320}$ & 1,2,(2,1,-2),(4,3,-4),(3,2,1,-2,-3),(5,4,-5),3,3,3,4,5&
\\ \hline
$12_{a344}$ & 1,2,(4,3,-4),3,3,3,(4,3,2,-3,-4)$^2$,4,4&
\\ \hline
$12_{a345}$ & 1,2,(5,4,3,-4,-5),3,4,5,(4,3,2,1,-2,-3,-4)$^2$,5&
\\ \hline
$12_{a355}$ & (2,1,-2),1,(4,3,-4)$^2$,3,3,(4,3,2,-3,-4),4,4,4,4&
\\ \hline
$12_{a356}$ & 1,(3,2,-3),4,(2,1,-2)$^2$,(5,4,3,-4,-5)$^2$,(4,3,-4),(5,4,-5)&
\\ \hline
$12_{a367}$ & 2,2,2,2,(2,1,-2)$^3$,3,3,(3,2,-3),1&
\\ \hline
$12_{a368}$ &1,2,(2,1,-2)$^3$,3,1,(3,2,-3),4,(4,3,2,-3,-4) &
\\ \hline
$12_{a391}$ &(2,1,-2)$^2$,3,3,1,3,(3,2,-3),4,2,(4,3,-4) &
\\ \hline
$12_{a392}$ &1,(4,3,2,-3,-4),(3,2,1,-2,-3),(2,1,-2),(5,4,3,-4,-5),3,3,4,5 &
\\ \hline
$12_{a420}$ &1,1,2,(4,3,-4),3,4,(3,2,1,-2,-3)$^2$,4,4,4 &
\\ \hline
$12_{a421}$ &1,2,(5,4,3,-4,-5),3,(5,4,-5),(3,2,1,-2,-3)$^2$,4,5 & 
\\ \hline
$12_{a431}$ &1,(4,3,2,-3,-4),1,2,(2,1,-2),(4,3,-4),3,(3,2,1,-2,-3)$^2$,4 &
\\ \hline
$12_{a432}$ &(2,1,-2)$^2$,3,3,1,(3,2,-3),4,2,2,(4,3,-4) &
\\ \hline
$12_{a442}$ &(3,2,-3),4,1,2,2,(2,1,-2)$^4$,(4,3,-4) &
\\ \hline
$12_{a443}$ & 2,(4,3,-4),5,(2,1,-2),(3,2,-3),(3,2,1,-2,-3)$^2$,(5,4,-5),4&
\\ \hline
$12_{a490}$ &(2,1,-2)$^2$, (4,3,-4), 3,1, (3,2,-3),2,(4,3,-4) &
\\ \hline
$12_{a574}$ &2, 2, (2,1,-2)$^5$, 3, 3, (3, 2, -3), 1 &
\\ \hline
$12_{a575}$ & (3,2,-3),1,2,2,(2,1,-2)$^2$, 3,3,4,(4,3,-4)&
\\ \hline
$12_{a586}$ & (2,1,-2)$^2$, 3, 1, (3,2,-3), 4, 2, (4, 3, -4)&
\\ \hline
$12_{a610}$ & 1,1,2,3,(3,2,1,-2,-3),(2,1,-2),(5,4,-5),(4,3,-4),5&
\\ \hline
$12_{a615}$ & (2,1,-2)$^2$, 3, 1, (3,2,-3)$^2$, 4, 2, (4,3,-4)$^2$&
\\ \hline
$12_{a647}$ & 1, 2, 2, (2,1,-2)$^2$, 3, 1, 1, (3,2,-3)$^2$&
\\ \hline
$12_{a648}$ & (2,1,-2), (4, 3, -4)$^2$, 1, 3, 3, (4,3,2,-3,-4), 4, (3,2, -3)&
\\ \hline
$12_{a653}$ & 1, 1, (5,4,3,2,-3,-4,-5), (2,1,-2), 3, (5,4,-5), 4, (5,4,3,-4,-5), (4,3,-4) &
\\ \hline
$12_{a659}$ & (2,1,-2), 2, (4,3,-4), 2, 2, (4,3,-4), 3, (3,2,-3), 4, (4,3,2,-3,-4)&
\\ \hline
\end{tabular}

\begin{tabular}{|l||l|l|l|}
\hline
Knot & Strongly quasipositive braid representative   & Comment 
\\ \hline 
$12_{a679}$ & 1,2,(4,3,-4),5, (3,2,1,-2,-3), (5, 4, -5)$^2$, 4, 5&
\\ \hline
$12_{a811}$ & 2, 2, (2,1,-2), 3, 3, 1, 1, (3,2,-3), 1, (3,2,1,-2,-3)$^2$&
\\ \hline
$12_{a813}$ & 2, 2, 2, 2, (2,1,-2), 3, 3, (3,2,-3), 1, (3,2,1,-2,-3)$^2$&
\\ \hline
$12_{a814}$ & (2,1,-2), (4,3,-4), 3, 3, 3, (4,3,2,1,-2,-3,-4), 4, 4, 1, 2&
\\ \hline
$12_{a817}$ & 2, 2, 2, (2,1,-2), 3, 3, 1, (3,2,-3), 1, (3,2,1,-2,-3)$^2$&
\\ \hline
$12_{a828}$ & (4,3,2,-3,-4),1,(4,3,2,1,-2,-3,-4),2,(2,1,-2),3,(3,2,1,-2,-3), 4,4,4&
\\ \hline
$12_{a876}$ & 2,2,2,2,(2,1,-2),3, 3, 3, 3, (3 2 -3), 1&
\\ \hline
$12_{a877}$ & (2,1,-2), (4,3,-4), 3, 4, (3,2,1,-2,-3), 4, 1, 2, 4, 4&
\\ \hline
$12_{a880}$ & 1, (3,2,-3), 4,5, (2,1,-2), (5,4,3,-4,-5), 3, &\\
& (5,4,3,2,1,-2,-3,-4,-5), (5,4,-5)&
\\ \hline
$12_{a900}$ & (3,2,1,-2,-3)$^2$, (2,1,-2)$^2$, (4,3,-4), 1, (4 3 2 -3,-4), 2, 3, 4&
\\ \hline
$12_{a973}$ & (4,3,2,-3,-4), 1, 2, (2,1 -2), (4,3,-4), 3, (3,2,1,-2,-3), 4, 4, 4&
\\ \hline
$12_{a974}$ & 2,3,(5,4,-5),(3,2,1,-2,-3),1,(5,4,3,2,-3,-4,-5), &\\
&(4,3,2,-3,-4),(4,3,2,1,-2,-3,-4),5& flype
\\ \hline
$12_{a995}$ & (4,3,2,-3,-4), 1, 2, (2,1,-2)$^3$, (4,3,-4), 3, (3,2,1,-2,-3), 4&
\\ \hline
$12_{a996}$ & (2,1,-2), 2, (4,3,-4), (5,4,3,2,-3,-4,-5), 3, (3,2,-3), (5,4,-5), 4, 5& flype
\\ \hline
$12_{a1004}$ & (3,2,-3), 1, (3,2,1,-2,-3), 4, 4, 2, (2,1,-2), &\\
& (4,3,-4), (4,3,2,1,-2,-3,-4), 3&
\\ \hline
$12_{a1035}$ & (3,2,1,-2,-3), (2,1,-2), 1, (4,3,2,-3,-4), 2, 2, 2, 2, 3, 4&
\\ \hline
$12_{a1037}$ & (5,4,3,2,1,-2,-3,-4,-5),1,(5,4,3,2,-3,-4,-5),2,3, &\\
&(5,4,-5),4,(5,4,3,-4,-5),(4,3,-4)& flype
\\ \hline
$12_{a1097}$ & (3,2,1,-2,-3),1,(3,2,-3),(5,4,-5),(5,4,3,2,1,-2,-3,-4,-5),2,(2,1,-2),3,4& flype
\\ \hline
$12_{a1112}$ & (3,2,1,-2,-3), (2,1,-2), (4,3,-4)$^3$, 1, (4,3,2,-3,-4), 2, 3, 4&
\\ \hline
$12_{a1113}$ & (4,3,2,1,-2,-3,-4),1,(3,2,-3),2,(5,4,-5),(5,4,3,-4,-5),3,4,5&flype
\\ \hline
\end{tabular}

\begin{tabular}{|l || l | l| l |}
\hline
Knot & Strongly quasipositive braid representative   & Comment 
\\ \hline
$12_{n74}$ & 1,1,1,2,2,(,2,1,-2),3,(2,1,-2)$^2$, 3, 3&
\\ \hline
$12_{n77}$ & 1,2, 3, 4, 3, 3, 3, (4,3,2,1,-2,-3,-4), 4, (3,2,1,-2,-3)&
\\ \hline
$12_{n88}$ & 1,2,2,(2,1,-2), 3, (3,2,-3), (3,2,1,-2,-3), (2,1,-2), 1&
\\ \hline
$12_{n91}$ & 1, 2, 2, 1, 1, 1, (3,2,-3), 2, 1, 3, 3&
\\ \hline
$12_{n96}$ & 1, 2, 3, 3, 4, (2,1,-2)$^2$, (4,3,-4), 3, 4&
\\ \hline
$12_{n100}$ & 2, 2, 3, 4, (3,2,-3)$^2$, (4,3,2,1,-2,-3,-4),& \\
& (3,2,-3), (3,2,1,-2,-3), (2,1,-2)&
\\ \hline
$12_{n105}$ & 1, 1, 1, 2, (2,1,-2), 3, 2, 2, 1, 3, 3&
\\ \hline
$12_{n110}$ & 1, 2, (2,1,-2), 3, 4, (4,3,-4), 1, 2, 2, 3&
\\ \hline
$12_{n133}$ & (2,1,-2), 1, 1, 2, 2, 3, (3,2,-3), (3,2,1,-2,-3)$^2$&
\\ \hline
$12_{n136}$ & 1, 1, 2, 2, 1, 1, (3,2,-3), 2, 1, 3, 3&
\\ \hline
$12_{n148}$ &  
1, 2, (2,1,-2), 3, 2, (2,1,-2)$^3$, 3$^2$ & SQP-unknown, mirror
\\ \hline
$12_{n149}$ & 
1,2,(4,3,-4),3,(2,1,-2),(4,3,-4),(4,3,2,1,-2,-3,-4),3 & SQP-unknown, mirror 
\\ \hline
$12_{n153}$ & 1, (3,2,-3), 1, (3,2,-3), 2, 2, 3, 1, (3,2,-3), 1, (3,2,-3)&
\\ \hline
$12_{n166}$ & 1, 2, 2, 2, 2, (2,1,-2), 3, (2,1,-2), 3, 3&
\\ \hline
$12_{n169}$ & 1, 2, (2,1,-2), 1, 2, 2, 3, (3,2,-3), 4, (4,3,2,-3,-4)&
\\ \hline
$12_{n177}$ & 1, 2, 3, 4, (4,3,-4), (2,1,-2)$^2$, (4,3,-4)$^2$, 3&
\\ \hline
$12_{n203}$ & 1, 2, 2, 3, 1, 1, (3,2,-3), 4, 2, (4, 3, -4)&
\\ \hline
$12_{n217}$ & 1,2,(2,1,-2),$^2$, 3,4,2,2,2,(4,3,-4)&
\\ \hline
$12_{n242}$ & 1,2,2,1,1,2,2,2,2,2,2,2& positive braid
\\ \hline
$12_{n243}$ & 1,2,2,3,3,2,2,2,2,(2,1,-2),3&
\\ \hline
$12_{n244}$ & 1,1,1,2,2,3,3,2,2,(2,1,-2),3&
\\ \hline
$12_{n245}$ & (2,1,-2), 3, 4, (2,1,-2), (4,3,-4), (2,1,-2),3 , 3, 1, 2&
\\ \hline
$12_{n251}$ & 1, (4,3,2,-3,-4)$^2$,1 ,1, (4,3,2,-3,-4)$^2$,2,3,4&
\\ \hline
$12_{n259}$ & (2,1,-2),2,2,3,1,(3,2,-3), (3,2,1,-2,-3)$^2$,(2,1,-2)&
\\ \hline
$12_{n276}$ & 2,2,1,2,2,3,1,3,2,(2,1,-2),3& mirror 
\\ \hline
$12_{n289}$ & 1,2,3,3,4,(2,1,-2)$^2$, (4,3,-4)$^2$,3&
\\ \hline
$12_{n292}$ & 1,(3,2,-3)$^2$, 1,1,1,(3,2,-3),2,2,1,3&
\\ \hline
$12_{n293}$ & 
(2, 1, -2), 3, (3,2,-3), (2,1,-2), 3, 2, 2 & SQP-unknown
\\ \hline
$12_{n305}$ & 1,2,3,3,(2,1,-2)$^2$,3,3,3,3,3&
\\ \hline
$12_{n308}$ & 1,2,(4,3,-4)$^2$, (2,1,-2)$^2$, (4,3,-4)$^2$, 3, 4&
\\ \hline
$12_{n321}$ & 
(2, 1, -2), (3, 2, -3)$^2$,(2,1,-2),3 , 2, 3  &SQP-unknown 
\\ \hline
$12_{n328}$ &1,2,3,1,3,2,2,(2,1,-2),3 ,(2,1,-2),2&
\\ \hline
$12_{n329}$ &  1, 2, 1, (4,3,-4), 3, (2,1,-2)$^2$,(4,3,4), 3 & mirror 
\\ \hline
$12_{n332}$ & 
1,2,(4,3,-4),(3,2,1,-2,-3)$^2$,4,3,3& SQP-unknown, mirror 
\\ \hline
$12_{n338}$ & 1,2,3,3,3,3,(2,1,-2)$^2$, 3, 3, 3&
\\ \hline
$12_{n341}$ & 1,2,3,4,(4,3,-4), (2,1,-2)$^2$, 3, 3, 3&
\\ \hline
$12_{n366}$ & 1,(3,2,3$^{-1}$)$^2$,2,3,1,3,(3,2,3$^{-1}$),2 & mirror 
\\ \hline
\end{tabular}

\begin{tabular}{|l || l | l| l |}
\hline
Knot & Strongly quasipositive braid representative   & Comment 
\\ \hline
$12_{n374}$ & 1,2,3,3,(2,1,-2)$^2$, 3,3,(2,1,-2)$^2$, 3&
\\ \hline
$12_{n386}$ & 1,(3,2,-3)$^2$, 1, 1, 2, 2, 3, (3,2,-3), 1, (3,2,-3)&
\\ \hline
$12_{n402}$ & 1,1,1,(3,2,-3),2,1,(3,2,-3),2,1& mirror
\\ \hline
$12_{n404}$ &  
(3,2,1,-2,-3), 4,3,3, (2,1,-2), 1, 2, (4,3,-4)  & SQP-unknown, mirror 
\\ \hline
$12_{n406}$ & 1,2,(4,3,-4)$^2$, 3, 4, (3,2,1,-2,-3), (2,1,-2), (4,3,-4)$^2$&
\\ \hline
$12_{n417}$ & (2,1,-2)$^2$,1,2,2,2,1,1,2,2&
\\ \hline
$12_{n426}$ & 1,2,3,1,3,2,(2,1,-2),3,2,1,2&
\\ \hline
$12_{n432}$ &
(3,2,1,-2,-3), 4, (2,1,-2), 3,3, 1, 2, (4,3,-4) & SQP-unknown, mirror
\\ \hline
$12_{n453}$ & 1,2,2,(4,3,-4),(4,3,2,-3,-4),2,3,4,(2,1,-2)$^2$&
\\ \hline
$12_{n472}$ & 1,2,2,2,2,1,1,2,2,2,2,2& positive braid
\\ \hline
$12_{n473}$ & 1,2,2,2,2,3,3,2,2,(2,1,-2),3&
\\ \hline
$12_{n474}$ & 1,2,3,3,2,2,2,2,2,3,(2,1,-2)&
\\ \hline
$12_{n477}$ & (2,1,-2),3,4,(2,1,-2)$^2$ (4,3,-4),(2,1,-2),(4,3,-4), 1, 2&
\\ \hline
$12_{n502}$ & 1,2,3,1,1,1,1,(3,2,-3),2,1,2&
\\ \hline
$12_{n503}$ & (2,1,-2),(4,3,-4),(2,1,-2),(4,3,-4),(2,1,-2),&\\
&(4,3,-4),3,4,1,2&
\\ \hline
$12_{n518}$ & (2,1,-2)$^2$, 3, 3, 2, 2, 2, 3, 1, 2, 2&
\\ \hline
$12_{n528}$ &
2,2, (2,1,-2), 3, 1, 2, (2,1,-2), 3,3  &SQP-unknown, mirror 
\\ \hline
$12_{n574}$ & 1,2,2,2,2,2,2,1,1,2,2,2& positive braid
\\ \hline
$12_{n575}$ & 1,2,3,3,2,2,2,3,2,2,(2,1,-2)&
\\ \hline
$12_{n576}$ & (2,1,-2), 3, 1, 3, 3, 2, 2, 2, 3, 3, 3&
\\ \hline
$12_{n581}$ & 1, 2, 3, 4, (4,3,2,-3,-4), 1, 1, 2, 2, 2&
\\ \hline
$12_{n585}$ & 1,2,3,4,(2,1,-2),(4,3,-4),(2,1,-2),(4,3,-4),(2,1,-2),3&
\\ \hline
$12_{n591}$ & 1,2,3,3,2,2,1,1,2,3,(2,1,-2)&
\\ \hline
$12_{n594}$ & 1,2,3,(3,2,-3)$^2$, 1, 1, (3,2,-3), (2,1,-2)&
\\ \hline
$12_{n600}$ & (2,1,-2),(4,3,-4)$^2$, 1, 2, 2, 3, 4, 2,(4,3,-4)&
\\ \hline
$12_{n638}$ & 1,(3,2,-3)$^2$, 1,1,2,2,3,(3,2,1,-2,-3)&
\\ \hline
$12_{n640}$ & (2,1,-2)$^2$, 1, 2, 2, 2, 2,1,1,2&
\\ \hline
$12_{n642}$ & (2,1,-2), (4,3,-4), 3, 2, (2,1,-2), (4,3,-4), 3, 2 & SQP-unknown, mirror
\\ \hline
$12_{n644}$ & 1, 2, 2, 2, 3, 1, 1,(3,2,-3), (2,1,-2)&
\\ \hline
$12_{n647}$ & (2,1,-2)$^2$, 1, 2, 2, 1, 1, 2, 2, 2&
\\ \hline
$12_{n655}$ & 1,2,3,3,(2,1,-2)$^2$, 3, (3,2,1,-2,-3)$^2$&
\\ \hline
$12_{n660}$ & 
1, (3,2,-3)$^2$, 2, 1, 3, (3,2,-3) 2,2  & SQP-unknown, mirror 
\\ \hline
$12_{n679}$ & 1,1,1,2,2,1,1,2,2,2,2,2& positive braid
\\ \hline
$12_{n680}$ & 1,2,(2,1,-2)$^2$,3,3,(2,1,-2)$^2$, 3, 3, 3&
\\ \hline
$12_{n688}$ & 1,1,1,2,2,2,2,1,1,2,2,2& positive braid
\\ \hline
$12_{n689}$ & 1,2,2,2,3,1,1,1,(3,2,-3),2,1&
\\ \hline
$12_{n691}$ & 1,2,3,(3,2,-3),1,1,1,(3,2,-3),2,2,1&
\\ \hline
$12_{n692}$ & 1,2,3,(3,2,-3)$^2$, 1, 1, 1, (3, 2, -3),2,1&
\\ \hline
$12_{n694}$ & 1,2,2,3,1,1,2,2,3,3,(2,1,-2)&
\\ \hline
$12_{n725}$ & 1,2,2,1,1,1,1,2,2,2,2,2& positive braid
\\ \hline
$12_{n750}$ & (2,1,-2),1,2,2,(2,1,-2),1,2,2&
\\ \hline
\end{tabular}

\begin{tabular}{|l || l | l| l |}
\hline
Knot & Strongly quasipositive braid representative   & Comment 
\\ \hline
$12_{n758}$ & (3,2,1,-2,-3),(2,1,-2),(4,3,-4),1,2,2,(4,3,-4),&\\
&3,(4,3,2,-3,-4),4&
\\ \hline
$12_{n764}$ & 1,(3,2,-3)$^2$,1,1,2,(2,1,-2),3,(3,2,1,-2,-3)&
\\ \hline
$12_{n801}$ & 
1, (3,2,-3), (2,1,-2), 3, (3,2,-3), 2, 3  & SQP-unknown
\\ \hline
$12_{n806}$ & 1,(3,2,-3),(3,2,1,-2,-3)$^2$,2,(2,1,-2),3,(3,2,1,-2,-3)$^2$&
\\ \hline
$12_{n830}$ &  
 (2,1,-2), 1,1, 2,2, (2,1,-2), 1, 2  &SQP-unknown 
\\ \hline
$12_{n850}$ & (2,1,-2)$^4$,1,2,2,1,1,2&
\\ \hline
$12_{n851}$ & (2,1,-2),1,2,2,3,(3,2,-3),2,3,3&
\\ \hline
$12_{n881}$ & (2,1,-2),3,(3,2,1,-2,-3),4,1,(4,3,2,-3,-4),2,(4,3,-4)&
\\ \hline
$12_{n888}$ & 1,1,1,2,2,2,1,1,1,2,2,2& positive braid
\\ \hline
\end{tabular}
\subsection{Tables of quasipositive knots with $\delta_3=1$}\label{subsec-tableQP}

The following is the list of knots up to 12 crossings that have $\delta_3=1$. (As we remarked earlier, there are no alternating knots with $\delta_3=1$, up to 12 crossings). 
We confirmed that they are all quasipositive, and list their quasipositive braid representatives.
The list shows that the answer to Question \ref{question:d=1} is ``Yes'' for knots up to 12 crossings. Unlike alternating knots, this list of non-alternating knots contains many quasipositive knots that are not positive. 
\\

\begin{tabular}{|l|| l|l|}
\hline
Knot & Quasipositive braid representative   & Comment 
\\ \hline 
$8_{21}$ & (2,1,-2)$^3$,2 & QP known
\\ \hline 
$9_{45}$ & (-2,1,2)$^3$,3,(3,2,-3) & QP known, mirror
\\ \hline 
$10_{127}$ & 1,1,1,(-2,1,2),(2,1,-2)$^2$ & QP known, 
\\ \hline 
$10_{131}$ & 1,(1,1,-2,-3,2,4,-2,3,2,-1,-1),(-2,3,2),4,3,           (-2,-3,2,4,-2,3,2) & QP known
\\ \hline 
$10_{133}$ & (2,-4,-3,2,3,4,-2),(2,-3,-2,3,-1,-3,2,3,4,-3,-2,3,1,-3,2,3,-2), & \\
& (2,-3,-2,3,1,-3,2,3,-2),1,1,2 & QP known
\\ \hline 
$10_{149}$ & (2,1,-2)$^3$,(-2,1,2)$^2$,2 & QP known
\\ \hline 
$10_{157}$ & 1,(-2,1,2)$^2$,2,(2,1,-2)$^2$ & QP known, mirror 
\\ \hline 
$10_{165}$ & (3-2,1,2,-3),2,(2,1,-2),3,(3,2,1,-2,-3) & QP known, mirror
\\ \hline 
$11_{n 1}$ & 1,1,(-3,2,3),(3,2,1,-2,-3),4,(4,3,-4) &
\\ \hline 
$11_{n 10}$ & (-2,1,2)$^3$,(2,1,-2)$^2$,3,(3,2,-3)&
\\ \hline 
$11_{n 14} $ & (-2,1,2)$^4$, (2,1,-2), 3, (3,2,-3)&
\\ \hline 
$11_{n 17}$ & 1,(-3,2,3),(-3,2,1,-2,3), (3,2,1,-2,-3),4,(4,3,-4) &
\\ \hline 
$11_{n 35}$ &  1, (-2,1,2)$^2$,(3,2,1,-2,-3),(2,1,-2),3,3 &
\\ \hline 
$11_{n 43}$ & 1, (-2,1,2)$^2$, (3,2,1,-2,-3),(2,1,-2)$^2$,3 &
\\ \hline
$11_{n 59}$ & (2,1,-2)$^3$, 1, (3,2,-3),2,(2,3,-2) &
\\ \hline 
$11_{n 63}$ & (-2,1,2)$^2$, (2,1,-2), (4,3,-4), (3,2,-3),4 &
\\ \hline
$11_{n 72}$ & 1, (-2,1,2), (3,2,1,-2,-3), (2,1,-2)$^2$, 3$^2$ &
\\ \hline
$11_{n 84}$ & 1, (-2,1,3,2,-3,-1,2), 1,(2,1,-2),3 & mirror
\\ \hline
$11_{n 89}$ & 1, 1, (-2,1,2),(3,2,-3),(2,1,-2)$^2$,3  & mirror 
\\ \hline
\end{tabular}

{\hspace{1cm}
\begin{tabular}{|l || l | l| l |}
\hline
Knot & Quasipositive braid representative   & Comment 
\\ \hline 
$11_{n 91}$ & 1,(3,2,-3),(2,1,-2,4,3,-4,2,-1,-2), (4,3,-4),3,4  & mirror
\\ \hline
$11_{n 95}$ & 1,(1,2,-1),2,3,2,(-1,2,1),3 &
\\ \hline
$11_{n 99}$ & (3,2,-3),(3,2,1,-2,-3)$^2$, (2,1,-2,3,2,-1,-2),(2,1,-2,1,2,-1,-2) & mirror
\\ \hline
$11_{n 105}$ & (2,1,-2)$^2$, 1, (-2,3,2),2,(2,3,-2)$^2$ &
\\ \hline
$11_{n 108}$ & 1, (3,2,-3), (-2,1,2)$^2$, (2,1,-2)$^2$, 3 & mirror
\\ \hline
$11_{n 109}$&  1,1,(-2,1,2)$^2$,(3,2,-3), (2,1,-2),3 & mirror
\\ \hline
$11_{n 113}$ & 1, (3,2,-3),(2,1,-2),(2,1,-2,4,3,-4,2,-1,-2), 3,4  & mirror
\\ \hline 
$11_{n 118}$ & 1,1,1,(2,1,-2),(-3,2,3),1,(3,2,-3) &
\\ \hline 
$11_{n 122}$ & (2,1,-2)$^3$,(3,-2,1,2,-3),(-3,2,3) & mirror
\\ \hline
$11_{n 134}$ & (-2,1,2)$^2$,(2,1,-2),(3,-2,1,2,-3),(3,2,-3) & mirror
\\ \hline
$11_{n 139}$ & 1,(-3,-2,1,2,3),(-4,3,2,-3,4),(4,3,-4)  &
\\ \hline
$11_{n 144}$ & 1,(-2,1,2)$^2$,(3,2,-3),(2,1,-2),3,3 &
\\ \hline 
$11_{n 162}$ & (-2,1,2),(2,1,-2),(-3,2,3),(3,2,1,-2,-3),(-4,3,4),4  &
\\ \hline
$11_{n 174}$ & (2,1,-2)$^2$,1,3,(-2,1,2),(3,2,-3),(2,1,-2) &
\\ \hline
$11_{n 185}$ &  (2,1,-2),1,3,(-2,1,2),(3,2,-3),2,(-3,2,3)  &
\\ \hline
\end{tabular}
}

\pagebreak

\begin{tabular}{|l || l |l|}
\hline
Knot & Quasipositive braid representative   & Comment 
\\ \hline
$12_{n58}$ & (2,1,-2),1,(3,2,-3),4,4,(3,2,-3),(2,3,-2),(2,-4,3,4,-2) & mirror
\\ \hline
$12_{n72}$ & (-2,-2,1,2,2),(-3,2,3),(-3,4,3),(-3,4,3),3,4 &
\\ \hline
$12_{n76}$ & 1,1,1,(3,3,2,-3,-3),(3,3,2,-3,-3),(-1,2,1),3 &
\\ \hline
$12_{n79}$ & 1(-3,-2,1,2,3),(-4,3,4),(-4,-4,3,4,4),2,2 &
\\ \hline
$12_{n81}$  &(-1,3,2,1,-2,-3,1),(-1,3,2,1,-2,-3,1),1,(3,2,-3),2,2,3 &
\\ \hline
$12_{n114}$  & 1,(1,1,2,-1,-1),(1,1,2,-1,-1),2,2,2,2,2 & QP-known
\\ \hline
$12_{n117}$  & 1,(-2,1,2),2,2,(-3,-3,2,3,3,),(-3,-3,2,3,3,),3 &
\\ \hline
$12_{n121}$  & 1,2,(-1,2,3,-2,1),(2,2,3,-2,-2),(2,2,3,-2,-2) & mirror
\\ \hline
$12_{n123}$  & (2,1,-2),1,(3,2,-3),(3,2,-3),(2,-3,4,3,-2),(2,3,-2),4,4 &
\\ \hline
$12_{n128}$   & 1,(-2,1,2),(-2,-2,3,2,2),(-2,-2,3,2,2),4,4,3,(3,4,-3) &
\\ \hline
$12_{n146}$ & 1,1,(3,2,-3), (-3,-4,3,2,-3,4,3), (3,2,-3), (-4,3,4) & mirror
\\ \hline
$12_{n155}$   & (1,2,-1)$^2$, 2,3, (-2,-2,3,3,2,2),1 &
\\ \hline
$12_{n168}$  & (3,-2,1,2,3)$^4$, (-3,2,3), 1, 3 &
\\ \hline
$12_{n171}$  & (-2,1,2), 2, (-3,2,1,-2,3), 3, 4, (4,3,-4) &
\\ \hline
$12_{n176}$  & 1, (2,2,1,-2,2,1,-2,4,-3,2,3,-4,-2), (-4,3,4), (3,2,-3) &
\\ \hline
$12_{n183}$  & 1, (3,2,-3), (-2,1,2), 4,4,2, (4,3,-4), (-3,2,3) &
\\ \hline
$12_{n191}$ &  1, (1,1,2,2,2,2,-1,-1),2,2,2 & QP-known 
\\ \hline
$12_{n194}$  & (2,1,-2)$^3$,3,1, (3,-2,1,2,-3), (-2,1,2) &
\\ \hline
$12_{n209}$ & (2,1,-2),3,(3,2,1,-2,-3), 1, (3,-2,1,2,-3), (-2,1,2)$^2$ & 
\\ \hline
$12_{n212}$ & (1,2,2,1,-2,3,-2,1,2,-3,-1), (3,2,-3,2,1,3,-2,-3) & mirror
\\ \hline
$12_{n213}$ & (-3,2,1,-2,3), 3, (2,1,-2)$^2$, 1, (-2,1,2), (-3,-2,1,2,3) &
\\ \hline
$12_{n222}$ & 1, (-3,2,3), (-3,2,1,-2,3), (-4,3,4), 1, (3,2,-3), 2,4 &
\\ \hline
$12_{n234}$ & 1,(-2,-2,1,1,2,2),2,2,2,2,2 & QP-known 
\\ \hline
$12_{n237}$ & 1, (-2,-2,3,3,2,2), 2,2,(2,1,-2),3 &
\\ \hline
$12_{n240}$ & 1,1,1,(-2,-2,3,3,2,2),(2,1,-2),3 &
\\ \hline
$12_{n247}$  & (2,1,-2),(-2,1,2)$^2$, 3, 3,4 &
\\ \hline
$12_{n249}$ & 1,(-2,-2,1,1,2,2),(-3,2,3),3,(3,2,-3) &
\\ \hline
$12_{n254}$ & (-2,1,2),(-3,2,3),(2,1,-2)$^2$,(3,2,-3)$^2$,1 &
\\ \hline
$12_{n270}$ & (2,1,-2),1,(4,3,2,-3,-4),(3,2,-3),(4,-3,2,3,-4),(-3,2,3) & mirror 
\\ \hline
$12_{n290}$ & (-1,3,2,1,-2,-3,1),1,1,(3,2,-3,2,2,3,-2,-3),(3,2,1,-2,-3),2 &
\\ \hline
$12_{n303}$  & 1, (-2,1,2), (-3,-3,2,2,3,3), 3,3,3 &
\\ \hline
$12_{n306}$  & 1, (-2,1,2), (-3,-3,2,2,3,3),(3,4,-3),4 &
\\ \hline
$12_{n316}$ & 1, (3,-2,1,2,-3), 2,2,(-3,2,3)$^2$,3 &
\\ \hline
$12_{n373}$  & (-2,1,2)$^2$, 3, (3,2,-3), (3,2,1,-2,-3), 1 &
\\ \hline
$12_{n375}$  & 1, (-2,1,2), 3, (3,2,-3), 2, (-3,2,3), 3 &
\\ \hline
$12_{n381}$ & (2,1,-2), (-4,3,4), 1, (-4,3,2,-3,4), (-4,3,4), (4,-3,2,3,-4) &
\\ \hline
$12_{n383}$ & 1, (-4,3,2,-3,4), (-2,1,2)$^2$, (-3,2,3), (4,3,-4) &
\\ \hline
$12_{n407}$ & 1, (-2,1,2), 3, (3,2,-3), (-3,2,3), 3,3 &
\\ \hline
$12_{n429}$  & (1,1,-3,2,1,-2,3,-1,-1),(1,1,-4,3,4,-1,-1),(1,1,3,2,-3,-1,-1),4,2,(4,3,-4) & mirror
\\ \hline
$12_{n441}$  & (2,1,-2)$^2$, 1, (-3,-2,1,2,3),(-3,2,3),3,2 &
\\ \hline
\end{tabular}
\pagebreak

\begin{tabular}{|l||l|l|}
\hline
Knot & Quasipositive braid representative   & Comment
\\ \hline
$12n_{496}$ &1, (3,2,-3),4,1,2,(-3,2,3),(4,3,-4),(-3,2,3) &
\\ \hline 
$12n_{510}$ &1, (3,2,-3), (-2,1,2),(3,2,-3),4,(-3,2,3),(4,3,-4),(-3,2,3) & mirror
\\ \hline 
$12n_{513}$ &(2,1,-2),(-2,1,2),3,3,(3,-2,1,2,-3),(3,2,-3) &
\\ \hline 
$12n_{520}$ &1, (3,2,-3), (-2,1,2), (-4,3,2,-3,4), (-3,2,3), (4,3,-4)  & mirror 
\\ \hline 
$12n_{564}$ & (-2,1,2), (4,3,-4),3,4,(3,2,-3),4 & mirror
\\ \hline 
$12n_{589}$&(-2,1,2), 3,3,(3,2,-3),(3,2,1,-2,-3),(2,1,-2)$^2$ &
\\ \hline 
$12n_{626}$&(2,1,-2),3,4,1,(3,2,-3)$^2$, (4,-3,2,3,-4),(-3,2,3) & mirror
\\ \hline 
$12n_{671}$&(2,1,-2),(-2,1,2)$^3$,3,2,(2,1,-2),(-3,2,3)$^2$& 
\\ \hline 
$12n_{674}$&1,1,1,(-2,-2,1,2,2)$^2$,2,2,2 & QP-known 
\\ \hline 
$12n_{677}$ &(2,1,-2), 3, 1, 1, (3,-2,1,2,-3), (-2,1,2), 2&
\\ \hline 
$12n_{682}$ &(2,1,-2)$^2$, (-2,1,2)$^2$, 3,2,(2,1,-2),(-3,2,3)$^2$ &
\\ \hline 
$12n_{698}$&(-2,1,2), 3, (3,-2,1,2,-3), (3,2,-3), (2,1,-2)$^3$ &
\\ \hline 
$12n_{699}$ &1, (3,-2,1,2,-3), (-3,2,3), 4, (4, -3, 2, 3, -4), (4, 3, -4) & mirror \\ \hline 
$12n_{700}$ &2, (2, 1, -2), 3, 3, 3, (3,2,1,-2,-3), (3,-2,1,2,-3)  & mirror
\\ \hline 
$12n_{701}$ &(-4,-3,2,1,-2,3,4), (4,-3,2,1,-2,3,-4), (4,3,-4), 3, 1,2 & mirror 
\\ \hline 
$12n_{722}$ & (2,1,-2), (-2,1,2)$^4$, 2, 2, 2 & QP-known
\\ \hline 
$12n_{724}$ &(2,1,-2), 3, 1, 1, 1, (3,-2,1,2,-3), (-2, 1, 2) &
\\ \hline 
$12n_{726}$ &2, (-3,2,3), (3,2,-3), (-4,3,4), (-2,1,2), 4 &
\\ \hline 
$12n_{729}$ &(2,1,-2), 3, (2,1,-2), 3, (-2,1,2), (-3,-2,1,2,3), (-3,2,3) &
\\ \hline 
$12n_{734}$ &(-2,1,2),3,(3,-2,1,2,-3), (3,2,-3)$^3$, (2,1,-2) &
\\ \hline 
$12n_{735}$ &1, (-3,-2,1,2,3),3,4,(4,3,2,-3,-4), (4,-3,2,3,-4) &
\\ \hline 
$12n_{738}$  &(2,1,-2)$^2$, (-2,1,2)$^2$,3, (3,-2,1,2,-3), (3,2,-3) &
\\ \hline 
$12n_{749}$& (-2,1,2),2,1,1,(2,2,1,-2,-2)$^2$ &
\\ \hline 
$12n_{753}$ & (3,2,1,-2,-3)$^2$,(-2,1,2),2,2,3,(3,2,-3) &
\\ \hline 
$12n_{770}$ &(2,1,-2), 3, (3,2,1,-2,-3)$^2$,(3,-2,1,2,-3), (-2,1,2), 2 &
\\ \hline 
$12n_{796}$&(-2,1,2)$^3$, 3, (3,-2,1,2,-3), (3,2,-3), (2,1,-2) &
\\ \hline 
$12n_{797}$ &(4,3,2,1,3,-2,1,2,-3,-1,-2,-3,-4),3,4,1,(4,3,2,-3,-4),2 &
\\ \hline 
$12n_{807}$ & (-2,1,2),3,2,(3,2,1,-2,-3),(2,1,-2),3,3 &
\\ \hline 
$12n_{814}$ &(-3,2,3),(-3,2,1,-2,3),4,(3,2,-3),(3,2,1,-2,-3), & \\
& (4,3,2,1,3,-2,1,2,-3,-1,-2,-3,-4) &
\\ \hline 
$12n_{820}$&  (2,1,-2)$^2$, 1, (-2,1,2)$^2$, 2,2, 2 & QP-known, mirror
\\ \hline 
$12n_{823}$ &1, (-2,1,2), (-3,2,3)$^2$, 3, (3,2,-3)$^2$ & mirror
\\ \hline 
$12n_{836}$ &(3,-2,1,2,-3)$^2$, 2, (2,1,-2)$^2$, 3, (3,2,1,-2,-3) &
\\ \hline 
$12n_{849}$  &(2,1,-2), 3, (3,2,1,-2,-3),1,(3,-2,1,2,-3), (-2,1,2), 2 &
\\ \hline 
$12n_{863}$ &(-2,1,2), 3, (3,-2,1,2,-3), (3,2,-3), (3,2,1,-2,-3)$^2$, (2,1,-2) &
\\ \hline 
$12n_{867}$ & (2,1,-2),(2,1,-2),1,(1,2,-1),3,(1,2,-1),2,3,3 &
\\ \hline 
$12n_{882}$ & (2,1,-2)$^3$, 1, (-2,1,2)$^2$, 2, 2 & QP-known
\\ \hline
\end{tabular}

\subsection{Tables of quasipositive knots with $\delta_3 >1$ }\label{subsec:delta3>1}

The following is the list of knots up to 12 crossings with $\delta_3 >1$ and $\delta_4=0$. We confirmed that they are all quasipositive. 
Therefore the following list, together with the previous list in Section \ref{subsec-tableQP}, gives a complete list of quasipositive knots up to 12 crossings, with possibly two exceptions $12_{n239}$ and $12_{n512}$ --- for $12_{n239}$ and $12_{n512}$ $\delta_{4}$ is only known to be $0$ or $1$, and we could not find quasipositive representatives of them. 

\hspace{-1cm}\begin{tabular}{|l||l|l|l|}
\hline
Knot & $\delta_3$ & QP braid notation & Comment
\\ \hline
$10_{155}$ & 3 &  (1,2,-1), (-1,-2,3,2,-3,2,1),(2,3,-2) & QP-known, mirror 
\\ \hline 
$10_{159}$ & 2 & (-2,-1,-1,2,1,1,2),(-2,1,2),(2,1,-2)$^2$ & QP-known
\\ \hline 
$11_{n22}$ & 2 & (-2,1,2)$^2$, (-1,2,1), (3,-2,3,2,-3), (3,2,-3)  &
\\ \hline
$11_{n40}$ & 2 &(2,1,-2),1,(-2,-2,1,2,2),(-2,-2,3,2,2),(2,3,-2)&
\\ \hline 
$11_{n46}$ & 2 & 1,(2,2,-1,2,1,-2,-2),(2,2,3,-2,-2),3,(3,2,-3)&
\\ \hline 
$11_{n50}$ & 2 &  1,(2,2,-1,-2,3,2,-3,2,1,-2,-2),(2,3,-2) & mirror
\\ \hline 
$11_{n54}$ & 2 &  (1,1,2,-1,-1),(2,1,-2),(2,3,-2),3,3 &
\\ \hline 
$11_{n71}$ & 2 &  1,(2,2,-1,2,1,-2,-2),(2,2,3,-2,-2),(3,2,-3),(3,2,-3) &
\\ \hline 
$11_{n75}$ & 2 &  1,1,(-2,-2,-2,1,2,2,2),(-2,-2,-2,3,2,2,2),3 & mirror
\\ \hline 
$11_{n87}$ & 2 & (-2,1,2)$^2$,(2,1,-2,3,2,-1,-2),(2,1,-2,1,2,-1,-2),3 & mirror
\\ \hline 
$11_{n127}$ & 2 &  (1,1,2,-1,-1),2,(2,1,-2),(2,3,-2),3  & mirror
\\ \hline 
$11_{n132}$ & 2 & (2,1,-2),(-2,3,-2,-3,3,3,2,-3,2),(-2,3,-2,-3,1,3,2,-3,2) & mirror
\\ \hline 
$11_{n146}$ & 2 &  1,2,(2,-1,-2,2,2,3,-2,-2,2,1,-2),3,(3,2,-3) &
\\ \hline 
$11_{n159}$ & 2 &  1,(2,2,-1,-2,2,2,1,-2,-2),(2,2,3,-2,-2),(2,1,-2),3 & mirror
\\ \hline 
$11_{n172}$ & 2 &(-2,1,2),(3,2,1,-2,-3,-1,-1,2,1,1,3,2,-1,-2,-3),3  & mirror
\\ \hline 
$11_{n176}$ & 2 & 1,1,(2,-1,-2,-2,1,2,2,1,-2),(2,-1,-2,-2,3,2,2,1,-2),3  & mirror
\\ \hline 
$11_{n178}$ & 2 & 1,(2,-1,-2,-2,3,2,2,1,-2),(2,-1,-2,-2,1,2,2,1,-2), & \\
& & (2,-1,-2,-2,3,2,1,-2,-3,2,2,1,-2),3 &
\\ \hline 
$11_{n184}$ & 2 & (2,1,-2),1,(3,-2,-3,3,3,2,-3),(3,-2,-3,-2,1,2,3,2,-3),(-3,2,3) &
\\ \hline 
$12_{n5}$ & 2  & (2,1,-2), 1, (-2,-3,4,3,-4,3,2), 4,4,(2,3,-2) & mirror
\\ \hline 
$12_{n80}$ & 2  & 1,(2,2,2,1,-2,-2,-2),(-3,2,1,-2,3)$^2$,3 & mirror
\\ \hline
$12_{n113}$ & 2  &   (2,1,-2),(1,1,2,-1,-1),2,2,2,2 & QP-known, mirror
\\ \hline
$12_{n116}$ & 2 & 1,(-3,-2,1,2,3),2,3,(3,2,-3) & mirror
\\ \hline
$12_{n140}$ & 2   &
(3,3,2,-3,-3)$^2$,(3,3,4,-3,-3),2,(3,2,1,-2,-3,4,3,2,-1,-2,-3), & \\
& &(3,2,1,-2,-3,1,3,2,-1,-2,-3) &
\\ \hline
$12_{n145}$ & 2  & 
(3,2,-3),4,(2,1,-2)$^2$,(-1,-2,-3,2,1,-3,4,3,-1,-2,3,2,1),& \\
& & (-1,-2,-3,2,1,-3,2,3,-1,-2,3,2,1)  & mirror 
\\ \hline 
$12_{n157}$ & 2   & 1,(-2,3,2),(-2,1,2),4,(2,-3,4,3,-2),(2,-3,2,3,-2) &
\\ \hline
$12_{n159}$ & 2 &
(-2,-2,1,2,2),(-3,2,3)$^2$,3,(2,1,-2) & mirror
\\ \hline
$12_{n190}$ & 2 & 
 1,1,(1,2,-1),2,2,(2,-1,-2,3,2,1,-2)  &QP-known, mirror
\\ \hline
$12_{n193}$ & 2 & 1,(1,1,2,-1,-1),3,(3,2,-3),(2,1,-2) & mirror
\\ \hline
$12_{n208}$ & 2& (3,2,-3),(2,1,-2),(-1,-1,-1,2,1,-2,1,1,1),3,&\\
& & (-1,-1,-1,3,2,-3,1,1,1)  & mirror
\\ \hline
$12_{n212}$ & 2 & (1,2,-1),(1,2,1,-2,-1),(1,3,-2,1,2,-3,-1),(3,2,-3,2,3,-2,-3),& \\
&  &(3,2,-3,1,3,-2,-3) & mirror
\\ \hline
$12_{n233}$ & 2 &  (-1,2,1,-2,1),(2,-1,2,1,-2),(2,1,-2)$^4$ & QP-known, mirror
\\ \hline
$12_{n236}$ & 2 & (1,2,-1),3,(1,3,2,1,-2,-3,-1),2,(2,2,1,-2,-2) & mirror
\\ \hline
$12_{n253}$ & 2 & (3,2,1,-2,-3),(3,3,2,-3,-3),(-2,1,2),(-2,-2,3,2,2),3 & mirror
\\ \hline
$12_{n318}$ & 3 & (-2,1,2),(3,2,-1,-2,-3,-3,2,3,3,2,1,-2,-3),&\\
& &(3,2,-1,-2,-3,1,3,2,1,-2,-3) & mirror
\\ \hline
$12_{n344}$ & 2 & 1,1,1,2,(2,2,2,1,-2,-2,-2)$^2$ & QP-known, mirror
\\ \hline
$12_{n345}$ & 3 & (2,2,2,1,-2,-2,-2),1,(1,2,-1)$^2$ & QP-known 
\\ \hline
$12_{n347}$ & 2 & 1,(-2,1,2),3,(3,3,3,2,-3,-3,-3)$^2$ & mirror
\\ \hline
$12_{n372}$ & 2 & (1,1,2,-1,-1)$^2$,3,(3,2,-3),(3,2,1,-2,-3) &
\\ \hline
$12_{n379}$ & 2 & (-1,3,2,1,-2,-3,1),(3,2,-1,-2,-3,3,2,-3,3,2,1,-2,-3),& \\
& & (3,2,-1,-2,-3,2,3,2,1,-2,-3)$^2$,(3,2,-1,-2,-3,3,3,2,1,-2,-3) & mirror 
\\ \hline
$12_{n393}$ & 2 & (2,1-2,4,3,-4,3,4,-3,4,-3,-4,2,-1,-2),& \\
& & (2,1,-2,4,3,-4,3,3,2,1,-2,-3,-3,4,-3,-4,2,-1,-2), & mirror \\
& &(2,1,-2,4,3,-4,3,2,-3,4,-3,-4,2,-1,-2),2  &  
\\ \hline
\end{tabular}

\pagebreak

\begin{tabular}{|l || l |l|l|}
\hline
Knot & $\delta_3$ & QP braid notation & Comment
\\ \hline
$12_{n409}$ & 2 & 1,(-3,2,3),(3,2,1,-2,-3),(2,-1,-2),(3,2,1,-2,-3),(2,1,-2)$^2$  & mirror
\\ \hline
$12_{n451}$ & 2 & (2,1,-2),3,1,(3,2,-3),(-3,-3,2,3,3) & mirror
\\ \hline
$12_{n454}$ & 2 & 1,(3,-2,1,2,-3),(2,-3,2,3,-2),3,3  & mirror
\\ \hline
$12_{n466}$ & 2 & 2,(2,2,2,1,-2,-2,-2),1,1,1,2 &  QP-known, mirror
\\ \hline
$12_{n467}$ & 3 &  1,(-2,-2,-2,-2,1,2,2,2,2)$^2$,2 & QP-known
\\ \hline
$12n_{469}$ & 2 & (-2,-2,1,2,2),2,3,(3,2,1,-2,-3),(2,1,-2) & mirror 
\\ \hline 
$12n_{487}$ & 2 & (-1,-2,3,2,1)$^2$,(2,1,-2),(1,2,3,-2,-1),2 &
\\ \hline 
$12n_{514}$ & 2& (2,1,-2),(3,2,-3),(3,2,1,-2,-3)$^2$,& \\
& &(3,2,1,-2,-3,-2,1,2,3,2,-1,-2,-3) & mirror
\\ \hline 
$12n_{522}$ & 2& 1,(-3,2,3),(3,2,1,-2,-3,2,1,-2,3,2,-1,-2,-3),(2,1,-2)$^2$ & mirror
\\ \hline 
$12n_{543}$ & 2& 2,3,(3,2,-3),(3,2,1,-2,-3),(1,1,2,-1,-1) & mirror
\\ \hline 
$12n_{570}$ & 2 &  1,2,2,2,(2,2,2,1,-2,-2,-2)$^2$ & QP-known, mirror
\\ \hline 
$12n_{572}$ & 2& 3,(1,3,2,-3,-1),(1,2,-1)$^2$,(2,2,1,-2,-2)  & mirror
\\ \hline 
$12n_{577}$ & 2& 1,(-3,2,3),(3,2,1,-2,-3,2,1,-2,3,2,-1,-2,-3)$^3$ &
\\ \hline 
$12n_{582}$ & 2& 1,(-3,-2,1,2,3),4,(-2,4,3,2,-3,-4,2) &
\\ \hline 
$12n_{604}$ & 2& (2,2,1,-2,-2),1,1,(-2,1,2)$^2$,2  &  QP-known, mirror
\\ \hline 
$12n_{606}$ & 2& (2-1,2,1,-2),(-2,1,2),(-3,-2,1,2,3),(-3,2,3),3 & mirror
\\ \hline 
$12n_{621}$ & 2& (-3,2,3),(3,4,-3),(3,2,2,1,-2,-2,-3),(2,1,-2),4,&\\
& & (-3,2,1,-2,3)  & mirror
\\ \hline 
$12n_{666}$ & 2& 1,(-2,1,2),(-2,-2,1,2,2),(-2,-2,-2,1,2,2,2),2,2 &  QP-known 
\\ \hline 
$12n_{667}$ & 2& (2,1,-2),3,(1,3,2,-3,-1),(1,2,2,1,-2,-2,-1),(2,1,-2) &
\\ \hline
$12n_{683}$ & 2&   (2,2,1,-2,-2),(2,1,-2),1,1,(1,2,-1),2 & QP-known, mirror
\\ \hline 
$12n_{684}$ & 2& (2,2,1,-2,-2),(2,1,-2),1,1,1,(-2,1,2) & QP-known, mirror
\\ \hline 
$12n_{685}$ & 2& 1,(3,2,-3),(-3,2,1,-2,3)$^2$,(2,-1,-2),3,(3,2,1,-2,-3) & mirror
\\ \hline 
$12n_{707}$ & 2&   (2,1,-2)$^3$,(-2,1,2),(-2,-2,1,2,2),2 & QP-known, mirror 
\\ \hline 
$12n_{708}$ & 4& (2,2,2,1,-2,-2,-2),(1,-2,1,2,-1) & QP-known
\\ \hline 
$12n_{717}$ & 2& 1,(3,-2,-3,-2,1,2,3,2,-3),(3,-2,-3,2,3,2,-3),& \\
& &(3,-2,-3,3,3,2,-3),(3,2,1,-2,-3) & mirror 
\\ \hline 
$12n_{719}$ & 2&  1,(-2,1,2),(-3,2,3),(-3,-3,2,3,3),(-3,-3,-3,2,3,3,3) &
\\ \hline 
$12n_{721}$ & 4&  1,
(2,-1,-1,-1,-1,2,1,1,1,1,-2) 
 & QP-known, mirror
\\ \hline 
$12n_{730}$ & 2&(-2,1,2),(3,2,-1,-2,-3,3,3,2,1,-2,-3), & \\
& & (3,2,-1,-2,-3,2,3,2,1,-2,-3),(2,1,-2),(3,2,-3) &
\\ \hline 
$12n_{742}$ & 2 &(-1,3,2,1,-2,-3,1),(3,-2,-3,1,3,2,-3),2,(2,1,-2),3 & mirror
\\ \hline 
$12n_{747}$ & 2 & (-1,2,1)$^2$,1,(-2,-2,1,2,2)$^2$,2 & QP-known, mirror
\\ \hline 
$12n_{748}$ & 3 &  (2,1-2,1,2,-1,-2),1,(-2,-2,1,2,2)$^2$ & QP-known
\\ \hline 
$12n_{767}$ & 3 &  (-1,2,1),(-1,-1,-1,-1,2,1,1,1,1)$^2$,2 & QP-known, mirror
\\ \hline 
$12n_{768}$ & 3 & 1,(-3,-3,-2,1,2,3,3),(-2,-2,3,2,2) & mirror 
\\ \hline 
$12n_{769}$ & 2 & (-2,1,2),3,(3,2,-3),(3,2,-3,3,2,1,-2,-3,3,-2,-3),(2,1,-2) & mirror
\\ \hline 
$12n_{771}$ & 2 & (2,1,-2),3,(3,2,1,-2,-3),(1,3,-2,1,2,-3,-1),(1,2,-1) & mirror
\\ \hline 
$12n_{811}$ & 2 & (2,2,1,-2,-2),(3,-2,3,2,-3),(3,-2,1,2,-3),(2,3,-2)$^2$ & 
\\ \hline 
$12n_{822}$ & 3 &  (2,2,1,-2,-2),(1,1,2,-1,-1),2,2 & QP-known
\\ \hline 
$12n_{829}$ & 3 & (2,2,1,-2,-2)$^2$,(-1,-1,2,1,-2,1,1),2 & QP-known, mirror
\\ \hline 
$12n_{831}$ & 3 &  (1-2,-2,1,2,2,-1)$^2$,2,(2,1,-2)  & QP-known
\\ \hline 
$12n_{838}$ & 2 &  1,(2,-1,-2,3,2,-3,2,1,-2),(4,3,-4),(-4,3,2,-3,4) &
\\ \hline 
$12n_{861}$ & 2 & (2,2,1,-2,-2),3,(3,2,1,-2,-3),(3,-2,1,2,-3),(3,2,-3) &
\\ \hline 
$12n_{862}$ & 2 & 2,3,(3,2,-1,2,1,-2,-3),(-1,3,2,1,-2,-3,1),(-1,-1,2,1,1) &
\\ \hline 
$12n_{871}$ & 2 & (1,2,-1),2,(-3,2,3),1,(-3,4,3,3,2,-3,-3,-4,3),4 &
\\ \hline 
$12n_{887}$ & 2 &  (2,1,-2),1,(-2,1,2)$^2$,(2,-1,2,1,-2)$^2$ & QP-known
\\ \hline  
\end{tabular}

\section*{Acknowledgements}
The authors would like to thank 
Jeremy Van Horn-Morris for sharing the list of knots whose strongly quasipositivity was previously unknown, 
Camila Ramilez for making an Excel file, and 
Inanc Baykur, Daryl Cooper,  John Etnyre, Charlie Frohman, and Matthew Hedden for useful conversations. 
JH was partially supported by The University of Iowa's Ballard and Seashore Dissertation Fellowship. 
TI was partially supported by JSPS Grant-in-Aid for Young Scientists (B) 15K17540.
KK was partially supported by NSF grant DMS-1206770 and Simons Foundation Collaboration Grants for Mathematicians.

\end{document}